\documentclass[11pt]{article}

\usepackage{amssymb,amsmath, amsfonts, amsthm}

\usepackage{graphicx}
\usepackage{pgfplots}
\usepackage{tikz}
\usepackage{caption}
\usepackage{booktabs}
\usepackage{array}
\usepackage{verbatim}
\usepackage{subfig}
\usepackage{geometry}
\usepackage{fancyhdr}

\newcommand{\vertex}{\node[vertex]}
\tikzstyle{vertex}=[circle, draw, inner sep=0pt, minimum size=6pt]
\newtheorem{theorem}{Theorem}
\newtheorem{lemma}{Lemma}
\newtheorem{corollary}{Corollary}
\newtheorem{obs}{Observation}

\newcommand{\smallqed}{{\tiny ($\Box$)}}
\newcommand{\dist}{\hskip2pt$\rm{dist}$}
\newcommand{\NewSarah}[1]{\textcolor{purple}{\bf #1}}

\begin{document}

\title{Characterizing all nonbipartite well-edge-dominated graphs}

\author{Sarah E. Anderson$^a$ \and Kirsti Kuenzel$^b$
}

\maketitle

\begin{center}
$^a$ Department of Mathematics, University of St. Thomas, St. Paul, Minnesota, USA\\
$^b$ Department of Mathematics, Trinity College, Hartford, CT, USA\\

\end{center}
\medskip

\maketitle
\begin{abstract}
Given a graph $G$, a set $F$ of edges is an edge dominating set of $G$ if every edge in $G$ is either in $F$ or adjacent to an edge in $F$. A graph $G$ is said to be well-edge-dominated if every minimal edge dominating set has the same cardinality. This definition is the edge version of domination in that a set $D\subseteq V(G)$ is a dominating set if every vertex in $G$ is in $D$ or adjacent to a vertex in $D$ and the domination number $\gamma(G)$ is the minimum cardinality among all dominating sets. In this paper, we complete the characterization of all nonbipartite, well-edge-dominated graphs. In addition, we produce an infinite class of graphs that satisfy the well-known Vizing's conjecture in domination theory that states $\gamma(G\Box H) \ge \gamma(G)\gamma(H)$ where $G\Box H$ is the Cartesian product of $G$ and $H$.
\end{abstract}

{\small \textbf{Keywords:} well-edge-dominated, equimatchable, edge domination, Cartesian product} \\
\indent {\small \textbf{AMS subject classification:} 05C69, 05C76, 05C75}
\maketitle
\section{Introduction}
In any graph $G$, an independent set of edges $F \subseteq E(G)$ (i.e. no pair of edges share a common vertex) is referred to as \emph{matching} in $G$. The maximum cardinality among all matchings in $G$ is the matching number and denoted by $\alpha'(G)$. On the other hand, a set of edges $F \subseteq E(G)$ is an \emph{edge dominating set} of $G$ if each edge in $G$ is either in $F$ or adjacent to an edge in $F$. The minimum cardinality among all edge dominating sets of $G$ is the edge domination number which we denote by $\gamma'(G)$. The matching number and edge domination number are the edge counterparts to the independence number $\alpha(G)$ (maximum vertex independent set) and domination number $\gamma(G)$ (minimum vertex dominating set). There is a key relationship between matchings and edge dominating sets in any graph in that every maximal matching is necessarily a minimal edge dominating set, i.e. $\gamma'(G) \le \alpha'(G)$. Lewin \cite{L-1974} and Meng \cite{M-1974} independently define a graph $G$ to be \emph{equimatchable} if every maximal matching in $G$ has cardinality $\alpha'(G)$. A polynomial time algorithm for recognizing when a graph is equimatchable was provided by Lesk, Plummer, and Pulleyblank \cite{LPP-1984}. Ideally, we would be able to define a structural characterization for all equimatchable graphs. However, to date this problem has been too difficult. There has been great progress in characterizing certain subclasses of equmatchable graphs. For instance, Fendrup, Hartnell, and Vestergaard \cite{FHV-2010} proved that a connected equimatchable graph with no cycles of length less than $5$ is either $C_5$, $C_7$, or belongs to the family of bipartite graphs that contains $K_2$ as well as those graphs $G$ with a bipartition $A\cup B$ where $A$ is the set of support vertices in $G$ (vertices adjacent to a vertex with degree one). They also proved that any equimatchable graph with no 3- or 4-cycles is \emph{well-edge-dominated} \cite{fhn-1988}, meaning that every minimal edge dominating set in the graph has cardinality $\gamma'(G)$. Continuing to focus on the length of the smallest cycle, B\"{u}y\"{u}k\c{c}olak et al. \cite{BGO-2020} provided a structural characterization of all nonbipartite equimatchable graphs containing $4$-cycles. Additionally, Akbari et al. \cite{AGHI-2018} studied $r$-regular equimatchable graphs and proved that  if $r$ is even, then $G \in \{K_{r+1}, K_{r,r}\}$ and if $r$ is odd and $G$ is triangle-free, then $G \in \{C_5, C_7, K_{r,r}\}$. For additional results in equimatchable graphs, see \cite{DE-2019, EK-2016, KPS-2003}.

Note that if $G$ is well-edge-dominated, then any maximal matching in $G$ must also have cardinality $\gamma'(G)$, meaning that $G$ is equimatchable. Therefore, the class of well-edge-dominated graphs is a subset of the class of all equimatchable graphs. As with equimatchable graphs, one would prefer to find a structural description of the class of all well-edge-dominated graphs. Again focusing on the length of the shortest cycle in the graph, it was shown by Anderson et al. \cite{AKR-2022} that the only connected, triangle-free,  nonbipartite well-edge-dominated graphs are $C_5, C_7$, and what they referred to as $C_7^*$, which is obtained from $C_7$ by adding a chord between two vertices at distance 3. Berg et al. \cite{BCKKPRV-2025} went on to characterize all connected well-edge-dominated graphs containing exactly one triangle, followed by a characterization of all connected nonbipartite $K_4$-free well-edge-dominated graphs provided by Anderson and Kuenzel \cite{AK-2026}. In this paper, we complete the characterization of all connected nonbipartite well-edge-dominated graphs by showing that every connected well-edge-dominated graph containing an induced $K_4$ is obtained from a particular operation on a smaller connected well-edge-dominated graph. 

To address the question of why one would like to have a structural characterization of all well-edge-dominated graphs, we point out that if we take the line graph $\mathcal{L}(G)$ of a well-edge-dominated graph $G$, then every minimal vertex dominating set of $\mathcal{L}(G)$ has the same cardinality, and we say that $\mathcal{L}(G)$ is well-dominated. An interesting property that we will encounter is that some  well-edge-dominated graphs from our characterization are ``minimal" in the sense that the removal of any edge from the graph decreases the edge domination number. For such a graph $G$, this implies that if we take the line graph $\mathcal{L}(G)$ of $G$, then for any $v \in V(\mathcal{L}(G))$,  $\gamma(\mathcal{L}(G)-v) = \gamma(\mathcal{L}(G))-1$. That is, every vertex in $\mathcal{L}(G)$ is  vertex critical with respect to domination. Characterizing vertex critical graphs is known to be very difficult. Therefore,  taking the line graph of such  well-edge-dominated graphs yields new examples of vertex critical graphs which can then be studied. Furthermore, there is a well-known conjecture in domination theory known as Vizing's conjecture that states that for any pair of graphs $G$ and $H$, $\gamma(G\Box H) \ge \gamma(G)\gamma(H)$, where $G\Box H$ denotes the Cartesian product of $G$ and $H$. (see \cite{VCsurvey}).  Indeed, we show in this paper that if $\mathcal{L}(G)$ is a line graph of a well-edge-dominated graph, then $\mathcal{L}(G)$ is not a counterexample to Vizing's conjecture. 

The remainder of this paper is organized as follows. In Section 1.1, we provide necessary definitions, terminology and previous results, including the description of the class of the connected nonbipartite $K_4$-free well-edge-dominated graphs that will be needed throughout this paper. In Section 2, we provide the structural description of our class $\mathcal{G^*}$  and prove that $G$ is connected, nonbipartite, and well-edge-dominated if and only if $G\in \mathcal{G^*}$ or $G$ is one of thirteen exceptions. In Section 3, we show that for every $G \in \mathcal{G^*}$ that the line graph of $G$, denoted $\mathcal{L}(G)$, satisfies $\gamma(\mathcal{L}(G)\Box H) \ge \gamma(\mathcal{L}(G))\gamma(H)$ for any graph $H$, implying that it satisfies Vizing's conjecture. In Section 4, we provide concluding remarks and open problems. 

\subsection{Definitions and Terminology}
Throughout this paper, we study only simple, finite graphs $V = (V(G), E(G))$ where we write $n(G) = |V(G)|$. Given any $v \in V(G)$, the {\it{open neighborhood of $v$}}, denoted by $N_G(v)$ (or $N(v)$ if the context is clear) is the set of all vertices adjacent to $v$. The {\it{closed neighborhood of $v$}} is defined to be $N_G[v] = N_G(v) \cup \{v\}$, or simply $N[v]$ when the context is clear. Given a set $S\subseteq V(G)$, we define the {\it{open neighborhood of $S$}} analogously i.e.  $N_G(S) = \cup_{v\in S}N_G(v)$. The {\it{degree}} of a vertex $v$ is $\deg_G(v) = |N_G(v)|$. Any vertex with degree one is referred to as a {\it{leaf}} and its lone neighbor in the graph is a {\it{support vertex}} (or specifically the support vertex of $v$ in $G$).  Any edge incident to a leaf is referred to as a \emph{pendent edge}. The \emph{girth} of $G$, denoted $g(G)$, is defined to be the length of the smallest cycle in $G$. We define $\Delta(G) = \max_{v\in V(G)}\{\deg_G(v)\}$. We also have edge versions of these definitions. Given any edge $f \in E(G)$, we denote the {\it{open edge neighbor of $f$}} as $N_e(f)$ and define it to be the set of all edges in $G$ adjacent to $f$. The {\it{closed edge neighborhood of $f$}} is analogously defined as $N_e[f] = N_e(f) \cup \{f\}$ and for any set $F\subseteq E(G)$, the {\it{open neighborhood of $F$}} is $N_e(F) = \cup_{f \in F}N_e(f)$. 

We say that $F\subseteq E(G)$ is an {\it{edge dominating set of $G$}} if every edge in $G$ is either in $F$ or adjacent to an edge in $F$. The minimum cardinality among all edge dominating sets of $G$ is the {\it{edge domination number of $G$}} and denoted by $\gamma'(G)$ (sometimes denoted $\gamma_e(G)$ in other papers). A set $M\subseteq E(G)$ is a matching in $G$ if it is independent, or no pair of edges in $M$ share a common vertex. The maximum cardinality among all matchings in $G$ is the matching number and is denoted $\alpha'(G)$. There is a well-known chain of inequalities relating these two parameters:
\[\gamma'(G)\le \alpha'(G)\le \Gamma'(G)\]
where $\Gamma'(G)$ is the maximum cardinality among all minimal edge dominating sets in $G$. $G$ is said to be {\it{equimatchable}} if all maximal independent sets have cardinality $\alpha'(G)$ and $G$ is said to be {\it{well-edge-dominated}} if all minimal edge dominating sets have cardinality $\gamma'(G)$. Therefore, if $G$ is well-edge-dominated, then $\gamma'(G) = \alpha'(G) = \Gamma'(G)$ meaning that $G$ is equimatchable. 

Similarly, we can define the above in terms of vertices. We say a set $D \subseteq V(G)$ is a {\it{dominating set of $G$}} if every vertex in $G$ is either in $D$ or adjacent to a vertex in $D$. The minimum cardinality among all dominating sets in $G$ is the {\it{domination number of $G$}} and is denoted by $\gamma(G)$. A set $I\subseteq V(G)$ is an {\it{independent set in $G$}} if no pair of vertices in $I$ are adjacent in $G$ and $\alpha(G)$ is the maximum cardinality among all independent sets in $G$, referred to as the independence number. The corresponding chain of inequalitiies with respect to vertices is 
\[\gamma(G)\le \alpha(G)\le \Gamma(G)\]
where $\Gamma(G)$ is the maximum cardinality among all minimal dominating sets of $G$. We say that $G$ is {\it{well-dominated}} if every minimal dominating set of $G$ has cardinality $\gamma(G)$ and $G$ is {\it{well-covered}} if all maximal independent sets have cardinality $\alpha(G)$. Therefore, if $G$ is well-dominated, then it is necessarily well-covered. 

Given any two vertices $u,v\in V(G)$, the graph obtained by {\it{identifying $u$ and $v$}} is found by removing $u$ and $v$ from $G$ and replacing them with a vertex $w$ where $w$ is adjacent to each vertex in $(N_G(u) \cup N_G(v))- \{u,v\}$. Given a set $S\subset V(G)$, we will refer to the operation of {\it{identifying each vertex in $S\subset V(G)$ into a single vertex}} to mean removing all vertices in $S$ from $G$ and replacing with a vertex $w$ where $w$ is adjacent to each vertex in $N_G(S) - S$. When we are dealing with a bipartite graph, we will write $G = (A\cup B, E)$ where we mean $V(G) = A\cup B$ such that $A$ and $B$ are independent sets. Recall that given a graph $G$, the {\it{line graph of $G$}} is denoted by $\mathcal{L}(G)$ and is the graph defined by $V(\mathcal{L}(G)) = \{v_e: e \in E(G)\}$ and $v_e, v_f\in V(\mathcal{L}(G))$ are adjacent if and only if $e$ and $f$ are adjacent in $G$. The \emph{complement} of $G$, denote $\overline{G}$, is the graph with $V(\overline{G}) = V(G)$ and $u,v \in V(\overline{G})$ are adjacent if and only if $uv\not\in E(G)$. 

The \emph{clique number} of $G$, denoted $\omega(G)$, is the cardinality of the largest induced clique in $G$. A partition of $V(G) = \Pi_1 \cup \cdots \cup \Pi_k$ is referred to as a \emph{clique partition} if $G[\Pi_j]$ is a clique in $G$ for each $j \in [k]$. The minimum $k$ for which $V(G) = \Pi_1 \cup \cdots \cup \Pi_k$ is a clique partition of $G$ is the \emph{clique partition number} of $G$, denoted $\theta(G)$. A map $f: V(G) \to \{1, \dots, k\}$ is a proper coloring of $G$ provided $f(u) \ne f(v)$ whenever $uv \in E(G)$, and we say that $G$ is $k$-colorable. The minimum value of $k$ for which $G$ is $k$-colorable is the \emph{chromatic number} of $G$ and is denoted $\chi(G)$. It is known that $\omega(G) \le \chi(G)$. Recall that we refer to $G$ as a \textit{perfect graph} provided $\omega(H) = \chi(H)$ for every induced subgraph $H$ of $G$. Note that we know $\alpha(G) = \omega(\overline{G})$ and $\chi(G) = \theta(\overline{G})$. Recall that Lov\'a{s}z proved the Perfect Graph Theorem, originally conjectured by Claude Berg.

\begin{theorem}\cite{L-1972} A graph $G$ is perfect if and only if $\overline{G}$ is perfect.
\end{theorem}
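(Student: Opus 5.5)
By symmetry, since $\overline{\overline{G}}=G$, it suffices to prove one direction: if $G$ is perfect, then $\overline{G}$ is perfect. I will follow Lov\'asz's original strategy. It rests on an equivalent characterization of perfection together with a replication lemma.

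\textbf{Step 1 (replication lemma).} Let $G$ be perfect and $v\in V(G)$. Let $G\circ v$ be obtained by adding a new vertex $v'$ adjacent to $v$ and to all of $N(v)$. I will show that $G\circ v$ is perfect. Induced subgraphs of $G\circ v$ are either induced subgraphs of $G$ or replications of such, so it suffices to show $\chi(G\circ v)=\omega(G\circ v)$. If $v$ lies in a maximum clique of $G$, then $\omega$ increases by one, and we color $v'$ with a new color. Otherwise, $\omega(G\circ v)=\omega(G)=\omega$. Take an $\omega$-coloring of $G$, and let $A$ be the color class containing $v$. Then $G-(A-\{v\})$ has clique number $\omega-1$, because every maximum clique meets every color class, and the maximum cliques meeting $A$ at $v$ do not exist by assumption. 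By perfection, $G-(A-\{v\})$ can be recolored with $\omega-1$ colors. The set $(A-\{v\})\cup\{v'\}$ is independent, so it can serve as the last class. Iterating the lemma, replacing a vertex by a clique of any size preserves perfection.

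\textbf{Step 2 (Lov\'asz's product condition).} The core claim is that if every proper induced subgraph of $\overline{G}$ is perfect, but $\alpha(G)\omega(G)\ge n(G)$ fails for some induced subgraph, then we get a contradiction. Concretely, I plan to prove: if $G$ is perfect, then every induced $H$ satisfies $\alpha(H)\omega(H)\ge n(H)$, and conversely a graph for which all induced subgraphs satisfy this inequality is perfect. Since the condition is symmetric in $\alpha=\omega(\overline{\cdot})$ and $\omega$, this yields the theorem.

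The forward direction is immediate: $n(H)\le \chi(H)\alpha(H)=\omega(H)\alpha(H)$. For the converse, take a minimal imperfect $G$ satisfying the condition. Then every proper induced subgraph is perfect, so $G-v$ partitions into $\omega$ independent sets for each $v$. Fixing a maximum independent set $I=\{u_1,\dots,u_\alpha\}$, the $\alpha\omega$ color classes of the graphs $G-u_i$, together with $I$, give $\alpha\omega+1$ independent sets $A_0,\dots,A_{\alpha\omega}$. Each vertex lies in exactly $\alpha$ of them. For each $A_i$ choose a maximum clique $K_i$ of $G-A_i$. Counting then shows $|K_i\cap A_j|=[i\neq j]$. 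Hence the incidence matrices satisfy $MN^T=J-I$, which is nonsingular, so $n\ge \alpha\omega+1$, contradicting the inequality.

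\textbf{Main obstacle.} The hardest part will be arranging the argument so that the complement symmetry actually closes. One way is the double counting above. Alternatively, I would try Lov\'asz's first route: assume $\overline{G}$ is minimally imperfect, and show that some clique of $G$ meets every maximum independent set. This is proved by replicating each vertex $x$ by the number of chosen maximum cliques containing it and invoking Step 1 to derive a counting contradiction. Then $\overline{G}$ minus that clique has smaller $\chi$, which yields perfection. I expect the counting in the replication argument, namely bounding $\omega$ of the blown-up graph while keeping $\chi$ large, to be the delicate step.
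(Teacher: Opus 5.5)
The paper does not prove this statement; it quotes Lov\'asz's Perfect Graph Theorem as a black box. Your proposal therefore has to stand on its own, and it does.

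Your Step~2 is a complete argument. It is Gasparian's rank proof of Lov\'asz's characterization: $G$ is perfect if and only if $\alpha(H)\omega(H)\ge n(H)$ for every induced subgraph $H$. Since $\alpha(\overline{H})=\omega(H)$ and $\omega(\overline{H})=\alpha(H)$, this condition is self-complementary, and the theorem follows at once. So the worry you raise under ``Main obstacle'' does not arise on this route.

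Step~2 never uses the replication lemma of Step~1, which you prove correctly. Step~1 belongs to Lov\'asz's first proof, the one you sketch as the alternative. That route shows directly that a perfect graph has a clique meeting every maximum independent set, and by induction gives $\theta(G)=\alpha(G)$, which is exactly the consequence the paper draws from the theorem. The rank route buys the stronger $\alpha\omega$ characterization with no replication at all. The replication route avoids linear algebra.

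\textbf{Tightening Step~2.} The counting needs $|K_i|=\omega$, i.e.\ $\omega(G-A)=\omega$ for every nonempty independent set $A$. This holds because otherwise $\chi(G)\le\chi(G-A)+1=\omega(G-A)+1\le\omega$, contradicting imperfection. The same remark shows each $G-u_i$ has exactly $\omega$ nonempty color classes. Given this, $\sum_j|K_i\cap A_j|=\alpha\omega$, each term is at most $1$, and $K_i\cap A_i=\emptyset$. These force $|K_i\cap A_j|=1$ for all $j\ne i$.

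\textbf{The alternative sketch.} The blow-up must be done in the perfect graph $G$ so that Step~1 applies. Each $x$ is replicated by the number of chosen maximum independent sets of $G$ (maximum cliques of $\overline{G}$) containing it. Removing the resulting clique $K$ lowers $\omega(\overline{G})$, and only then does minimality give $\chi(\overline{G}-K)=\omega(\overline{G}-K)$. As written (``maximum cliques'', ``smaller $\chi$''), that paragraph is imprecise, but you do not need it.

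\textbf{Minor points.} The opening sentence of Step~2 is garbled. You also use $I$ both for the independent set and for the identity matrix.
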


As a result, if $G$ is a perfect graph, then $\overline{G}$ is also perfect and we know $\omega(\overline{G})= \chi(\overline{G})$. This in turn implies that $\alpha(G) = \theta(G)$. Also note that it is known that line graphs of bipartite graphs are perfect. To see this, it was shown by K\"{o}nig for a bipartite graph $G$ that the edge chromatic number, denoted $\chi'(G)$, satisfies $\chi'(G) = \Delta(G)$. Therefore, $\chi(\mathcal{L}(G)) = \chi'(G)$ and $\omega(\mathcal{L}(G)) = \Delta(G)$. Thus, for any induced subgraph $H$ of $G$, $H$ is also bipartite and therefore $\chi(\mathcal{L}(H)) = \omega(\mathcal{L}(H))$. We state this as an observation for ease of reference.

\begin{obs}\label{obs:bipartite} The line graph of a bipartite graph is perfect.
\end{obs}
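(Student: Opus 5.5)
To prove Observation~\ref{obs:bipartite}, we fix a bipartite graph $G$ and show directly that every induced subgraph $H'$ of $\mathcal{L}(G)$ satisfies $\omega(H') = \chi(H')$. The first step is to note that induced subgraphs of $\mathcal{L}(G)$ are themselves line graphs of subgraphs of $G$. If $H'$ is induced on the vertex set $\{v_e : e \in F\}$ for some $F \subseteq E(G)$, then $H' = \mathcal{L}(H)$, where $H$ is the spanning subgraph $(V(G), F)$. Since two edges of $F$ are adjacent in $H$ exactly when they are adjacent in $G$, the adjacency in $H'$ matches that of $\mathcal{L}(H)$. The subgraph $H$ is bipartite because it is a subgraph of $G$. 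So the observation reduces to showing $\omega(\mathcal{L}(H)) = \chi(\mathcal{L}(H))$ for every bipartite graph $H$. (The paper's phrase ``induced subgraph $H$ of $G$'' should really be ``subgraph''; the edge subsets $F$ need not induce anything.)

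The second step computes the clique number. A clique in $\mathcal{L}(H)$ is a set of pairwise adjacent edges of $H$. Pairwise intersecting edges either all share a common vertex or form a triangle. A bipartite graph has no triangles, except in the degenerate case of at most two edges, where sharing a vertex is automatic. Hence every clique in $\mathcal{L}(H)$ is a star at some vertex, so $\omega(\mathcal{L}(H)) = \Delta(H)$. The case where $H$ has no edges is trivial, since both quantities are $0$.

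The third step computes the chromatic number. A proper vertex coloring of $\mathcal{L}(H)$ is exactly a proper edge coloring of $H$, so $\chi(\mathcal{L}(H)) = \chi'(H)$. K\"onig's edge-coloring theorem gives $\chi'(H) = \Delta(H)$ for bipartite $H$. Combining the three steps yields $\omega(H') = \Delta(H) = \chi(H')$, so $\mathcal{L}(G)$ is perfect.

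The only substantive ingredient is K\"onig's theorem, which I would cite rather than reprove. If a self-contained argument were wanted, I would use the standard alternating-path recoloring: color the edges one at a time with $\Delta(H)$ colors. When an uncolored edge $uv$ has missing color $a$ at $u$ and $b$ at $v$, swap colors along the $a/b$ alternating path starting at $v$. Bipartiteness guarantees this path cannot end at $u$, since it would then form an odd cycle together with $uv$. This is the only delicate step.
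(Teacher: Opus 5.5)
Your proof is correct and follows the same route as the paper. In both, induced subgraphs of $\mathcal{L}(G)$ are line graphs of subgraphs of $G$, and K\"onig's theorem gives $\chi(\mathcal{L}(H)) = \chi'(H) = \Delta(H) = \omega(\mathcal{L}(H))$ for every bipartite $H$; your star-or-triangle justification of $\omega(\mathcal{L}(H)) = \Delta(H)$ and your correction of ``induced subgraph of $G$'' to ``subgraph of $G$'' are both accurate refinements of the paper's brief argument.
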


  \section{Previous Results on Well-edge-dominated Graphs}
  Bipartite, equimatchable graphs were studied by B\"{u}y\"{u}k\c{c}olak et al. in \cite{BGO-2023}. Notably, they proved the following.
  
  \begin{lemma}\cite{BGO-2023} Let $G = (A\cup B, E)$ with $|A| \le |B|$ be a connected equimatchable bipartite graph. Then each vertex $u\in A$ satisfies at least one of the following:
  \begin{enumerate}
  \item[(i)] $u$ is a support vertex in $G$, 
  \item[(ii)] $u$ is included in a subgraph $K_{2,2}$ in $G$.
  \end{enumerate}
  \end{lemma}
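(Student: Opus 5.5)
The plan is to argue by contradiction, using the fact that in an equimatchable graph \emph{every} maximal matching is maximum. Fix $u\in A$ and suppose $u$ satisfies neither (i) nor (ii). Let $N(u)=\{w_1,\dots,w_k\}\subseteq B$.
\begin{itemize}
\item Since $u$ is not a support vertex, each $w_i$ has degree at least $2$, so each $w_i$ has a neighbour $a_i\in A$ with $a_i\neq u$.
\item Since $u$ lies in no $K_{2,2}$, two distinct neighbours $w_i,w_j$ of $u$ have no common neighbour other than $u$. Hence the vertices $a_1,\dots,a_k$ are pairwise distinct.
\end{itemize}
Therefore $\{w_1a_1,\dots,w_ka_k\}$ is a matching that covers all of $N(u)$ but leaves $u$ uncovered. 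Extend it greedily to a maximal matching $M_1$. Because every edge at $u$ ends at a covered vertex, $u$ stays exposed in $M_1$. By equimatchability, $M_1$ is a maximum matching.

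It therefore suffices to prove the following claim: in a connected equimatchable bipartite graph with $|A|\le|B|$, every maximum matching saturates $A$, that is, $\alpha'(G)=|A|$. Given the claim, $M_1$ is maximum yet misses $u\in A$, which is the desired contradiction. Note that this reduction uses the hypothesis $|A|\le |B|$ only through the claim.

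To prove the claim, suppose some maximum matching $M$ misses a vertex $a\in A$. Let $S\subseteq A$ be the set of vertices reachable from $a$ by $M$-alternating paths, and let $T=N(S)$. Because $M$ is maximum there is no augmenting path, so the standard Hall/K\"onig argument gives $T\subseteq V(M)$, with $T$ matched onto $S-\{a\}$ and $|T|=|S|-1$. Also $|M|<|A|\le |B|$, so some vertex $b\in B$ is exposed in $M$, and $b\notin T$. There are now two cases.
\begin{itemize}
\item If $S\cup T$ is all of $V(G)$, connectivity and $|A|\le|B|$ immediately give a contradiction.
\item Otherwise, connectivity gives an edge $tx$ with $t\in T$ and $x\in A\setminus S$. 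Every such $x$ is matched by $M$, since an $M$-exposed $x$ adjacent to $t$ would already lie in $S$.
\end{itemize}
In the second case, the plan is to build a maximal matching of size smaller than $|M|$, contradicting equimatchability. Start from the edge $tx$, together with $M$ restricted to the vertices outside $S\cup T\cup\{x,M(x)\}$, and then extend greedily. The vertex $x$ is now spent on $t$, so its former partner $M(x)$ must be re-covered and the $M$-partner of $t$ loses its match. A counting argument should show that the resulting maximal matching has at most $|M|-1$ edges, because the deficiency $|T|=|S|-1$ makes $S\cup T$ too small to compensate.

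I expect this step, proving $\alpha'(G)=|A|$ for connected equimatchable bipartite graphs with $|A|\le |B|$, to be the main obstacle. If the direct exchange argument becomes messy, I would instead invoke the Gallai--Edmonds decomposition, as in the characterization of Lesk, Plummer and Pulleyblank \cite{LPP-1984}, to obtain that the smaller side is saturated by every maximum matching.
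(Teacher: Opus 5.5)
Your reduction is correct and is the natural argument. If $u$ is neither a support vertex nor on a $K_{2,2}$, the edges $w_ia_i$ form a matching covering $N(u)$, and any maximal extension leaves $u$ exposed. Everything then rests on the fact that in a connected equimatchable bipartite graph with $|A|\le|B|$ every maximal matching saturates $A$. The paper gives no proof of the lemma (it is quoted from \cite{BGO-2023}), but right after the lemma it states exactly this saturation fact, also from \cite{BGO-2023}. Citing it completes your argument.

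Your self-contained proof of the claim, however, has a genuine gap. First, with $S$ grown from the single exposed vertex $a$, the assertion that every $x\in A\setminus S$ adjacent to $T$ is matched is false. An $M$-alternating path from $a$ enters $t\in T$ along a non-matching edge and must leave along $tM(t)$, so an exposed $x$ adjacent to $t$ need not lie in $S$. Second, and more seriously, the counting step fails. After placing $tx$, the greedy extension may re-add the $|T|-1$ edges of $M$ at $T\setminus\{t\}$. If $M(x)$ also has an $M$-exposed neighbour $y\in A\setminus S$ (nothing in your set-up excludes this), the extension can add $yM(x)$ as well, producing $|M|$ edges and no contradiction.

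The repair is to grow $S$ from the set $X$ of \emph{all} $M$-exposed vertices of $A$.
\begin{itemize}
\item Since $M$ has no augmenting path, $T=N(S)$ is saturated by $M$ and matched onto $S\setminus X$. So $|T|<|S|\le|A|\le|B|$, hence $T\neq B$, and connectivity yields an edge $tx$ with $t\in T$ and $x\in A\setminus S$.
\item Now $x$ is matched because $X\subseteq S$, and $M(x)\notin T$ because $M(T)\subseteq S$.
\item Put $M^\ast=(M\setminus\{tM(t),xM(x)\})\cup\{tx\}$. The $M^\ast$-exposed vertices of $A$ are $X\cup\{M(t)\}\subseteq S$. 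The $M^\ast$-exposed vertices of $B$ are the $M$-exposed ones together with $M(x)$, all lying outside $T=N(S)$.
\item Hence no edge joins two $M^\ast$-exposed vertices, so $M^\ast$ is a maximal matching with $|M|-1$ edges. This contradicts equimatchability, and no greedy extension or counting is needed.
\end{itemize}
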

  
  It was also shown in \cite{BGO-2023} that if $G= (A\cup B, E)$ is a bipartite equimatchable graph with $|A|\le |B|$, then by greedily choosing edges in $G$ to create a maximal matching, we will have chosen a set $M$ that saturates $A$. Since every well-edge-dominated graph is equimatchable, we have the following. 
  
\begin{lemma}\cite{BGO-2023}\label{lem:satA} Let $G = (A \cup B, E)$ be a well-edge-dominated bipartite graph where $|A| \le |B|$. There exists a maximal matching in $G$ that saturates $A$.
\end{lemma}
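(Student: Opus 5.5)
Since every well-edge-dominated graph is equimatchable, it suffices to prove the statement for a connected equimatchable bipartite graph $G=(A\cup B,E)$ with $|A|\le|B|$; disconnected graphs are handled componentwise. In an equimatchable graph every maximal matching is maximum, so we may simply show that $G$ has a matching saturating $A$, i.e.\ $\alpha'(G)=|A|$. Any maximum matching, which is maximal, then has the required property.

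To prove $\alpha'(G)=|A|$ we use K\"onig's theorem: $\alpha'(G)$ equals the minimum vertex cover size $\tau(G)$. Suppose, for contradiction, that $\alpha'(G)<|A|$. Hall's condition then fails, so there is a set $S\subseteq A$ with $|N(S)|<|S|$. Take $S$ to be a \emph{maximal} deficient set, i.e.\ one maximizing the deficiency $|S|-|N(S)|$. Consider the subgraphs $G_1=G[S\cup N(S)]$ and $G_2=G-(S\cup N(S))$. Every edge at a vertex of $S$ lies in $G_1$. By the choice of $S$, $A\setminus S$ can be matched into $B\setminus N(S)$ (standard Hall/K\"onig argument), and $N(S)$ can be matched into $S$.

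The aim is to build a maximal matching of size strictly less than some other maximal matching, contradicting equimatchability. Let $M_1$ be a matching of $N(S)$ into $S$, and extend it to a maximal matching. We compare this with a maximum matching, using the structure of $S$. Here we use the earlier lemma of \cite{BGO-2023}: every $u\in A$ is a support vertex or lies in a $K_{2,2}$. This applies only when the side $A$ is the smaller one. We expect to need freedom in choosing which edges $M_1$ uses: since $|S|>|N(S)|$, some vertex $u\in S$ remains unsaturated by $M_1$. If $u$ is a support vertex with leaf $\ell\in N(S)$, we reroute $M_1$ so that $\ell$'s partner becomes $u$, leaving a different vertex of $S$ exposed. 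Then, by alternating-path arguments, we produce two maximal matchings of different sizes.

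The main obstacle is this last step: converting Hall deficiency into two maximal matchings of unequal size. A cleaner route we would try first is the Gallai--Edmonds decomposition. For bipartite $G$, the set $D(G)$ of vertices missed by some maximum matching lies in the larger ``side'' of each component of $G[D]$. If $A\cap D(G)\neq\emptyset$, we pick $u\in A\cap D(G)$ and a neighbor $v$ of $u$. Greedily matching $uv$ first, then extending, forces a maximal matching, and one then argues that this choice can be made so the matching is not maximum. The argument uses that $u$ either has a leaf neighbor or lies in a $K_{2,2}$, together with the inequality $|A|\le|B|$ to ensure that $B$ is the side where the excess lies. The case analysis on the support-vertex/$K_{2,2}$ dichotomy is where we expect the real work.
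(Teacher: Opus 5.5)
\textbf{There is a genuine gap: the core step is never proved.} Your first step matches the paper's. Well-edge-dominated implies equimatchable, so it suffices to show $\alpha'(G)=|A|$; the paper then simply cites \cite{BGO-2023} for that fact. Your proposal never establishes it: both the Hall route and the Gallai--Edmonds route stop at what you yourself call the main obstacle.

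\textbf{Why your suggested routes do not close it.} The concrete move you propose cannot produce a contradiction. If $u\in D(G)$ and $uv\in E(G)$, take a maximum matching $M$ missing $u$. Then $v$ is covered by some $vv'\in M$, and $M-vv'+uv$ is again maximum, so starting the greedy process with $uv$ yields nothing. Invoking the support-vertex/$K_{2,2}$ lemma is unnecessary and circular in spirit, since that lemma is a quick consequence of the very saturation property you are trying to prove.

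\textbf{The componentwise claim is false.} The hypothesis $|A|\le|B|$ does not pass to components. For example, take $2P_3$ with $A$ consisting of the center of one copy together with the two leaves of the other. This graph is well-edge-dominated with $|A|=|B|=3$, yet $\alpha'=2$. So the lemma must be read for connected $G$, as the paper uses it, and connectivity has to enter the argument.

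\textbf{The missing idea is already within reach of your setup.} For $S$ of maximum deficiency, the two matchings you describe give $\alpha'(G)\ge |N(S)|+|A\setminus S|$. Meanwhile $K=N(S)\cup(A\setminus S)$ is a vertex cover, so $|K|=\alpha'(G)$.
\begin{itemize}
\item Suppose an edge $xy$ joined $x\in N(S)$ to $y\in A\setminus S$. Then $K\setminus\{x,y\}$ covers $G-x-y$, so any maximal matching containing $xy$ has at most $1+(\alpha'(G)-2)$ edges, contradicting equimatchability.
\item Hence no such edge exists. Since $S$ has no neighbors outside $N(S)$, no edge joins your $G_1$ and $G_2$.
\item Connectivity and $S\neq\emptyset$ then force $G_2$ to be empty, i.e.\ $S=A$ and $N(S)=B$. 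This gives $|B|=|N(S)|<|S|=|A|$, a contradiction.
\end{itemize}
Equivalently: a minimum vertex cover of a bipartite equimatchable graph is independent, and its complement is independent too. So it is one side of the bipartition, which for connected $G$ forces $\alpha'(G)\ge|A|$. No alternating-path or $K_{2,2}$ case analysis is needed.
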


There is also a nice property that all equimatchable and well-edge-dominated graphs enjoy.

\begin{lemma}\label{lem:reduce}\cite{AKR-2022} Let $M$ be a matching in a graph $G$. If $G$ is well-edge-dominated, then $G - N_e[M]$ is well-edge-dominated. If $G$ is equimatchable, then $G - N_e[M]$ is equimatchable. 
\end{lemma}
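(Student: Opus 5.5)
The plan is to prove both statements of Lemma~\ref{lem:reduce} by the same extension argument. Write $H = G - N_e[M]$. Here $H$ has vertex set $V(G) - V(M)$, where $V(M)$ is the set of vertices saturated by $M$, and its edges are exactly those edges of $G$ with neither endpoint in $V(M)$. The key observation is that any edge of $G$ not in $H$ lies in $N_e[M]$, so it is either in $M$ or has an endpoint in $V(M)$.

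For the equimatchable part, take any maximal matching $F$ of $H$. Then $F \cup M$ is a matching of $G$, because the edges of $F$ avoid $V(M)$. It is also maximal in $G$. Indeed, an edge $e \notin F\cup M$ either lies in $H$, in which case it meets $F$ by the maximality of $F$ in $H$, or it has an endpoint in $V(M)$ and so meets $M$. Since $G$ is equimatchable, $|F| + |M| = \alpha'(G)$. This forces all maximal matchings of $H$ to have the same size $\alpha'(G) - |M|$, so $H$ is equimatchable.

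For the well-edge-dominated part, take any minimal edge dominating set $F$ of $H$ and show that $F \cup M$ is a minimal edge dominating set of $G$.
\begin{itemize}
\item \emph{Domination.} This follows exactly as above: edges of $H$ are dominated by $F$, and all other edges are in $M$ or adjacent to $M$.
\item \emph{Minimality for $f \in F$.} By minimality of $F$ in $H$, some edge $e$ of $H$ is dominated in $H$ by $f$ but by no other edge of $F$. The edge $e$ has no endpoint in $V(M)$, so it is not dominated by $M$ either. Hence $(F\cup M) - \{f\}$ fails to dominate $e$.
\item \emph{Minimality for $m \in M$.} The edge $m$ itself must be dominated by $(F \cup M) - \{m\}$. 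It is not adjacent to any other edge of $M$, since $M$ is a matching. It is not adjacent to any edge of $F$, since edges of $F$ avoid $V(M)$. So $m$ is undominated once removed.
\end{itemize}
Thus $F\cup M$ is minimal. Because $G$ is well-edge-dominated, $|F| + |M| = \gamma'(G)$ for every minimal edge dominating set $F$ of $H$. Hence all minimal edge dominating sets of $H$ have size $\gamma'(G) - |M|$, and $H$ is well-edge-dominated.

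The only delicate step is the minimality check, specifically confirming that a private edge for $f$ in $H$ remains private in $G$. This hinges on the fact that edges of $H$ have no endpoint in $V(M)$, so no edge of $M$ can dominate them. The degenerate case where $H$ has no edges is handled by taking $F=\emptyset$, with $M$ maximal, and the argument is unchanged.
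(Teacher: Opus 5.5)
Your proof is correct, including the check that a private edge of $f\in F$ in $G-N_e[M]$ stays private in $G$ and that each $m\in M$ is its own private edge. It is the standard argument for this lemma: extend a maximal matching (respectively, a minimal edge dominating set) $F$ of $G-N_e[M]$ by $M$ to get one of $G$ of size $|F|+|M|$. The paper itself does not prove the lemma; it only cites it from the earlier reference.
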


As mentioned above, the impetus for continuing to characterize well-edge-dominated graphs based on the girth of $G$, was the following result. 
  
  \begin{theorem}\label{thm:girth5red}\cite{AKR-2022} If $G$ is well-edge-dominated with $g(G) \ge 4$, then either $G$ is bipartite or $G \in \{C_5, C_7, C_7^*\}$.
  \end{theorem}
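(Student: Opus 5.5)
The plan is to work with the odd cycles of $G$. We may assume $G$ is connected, since each component of a well-edge-dominated graph is again well-edge-dominated, and a disconnected nonbipartite graph would be handled component by component. Suppose $G$ is well-edge-dominated, nonbipartite, and has $g(G)\ge 4$. Let $C$ be a shortest odd cycle; its length is $2k+1\ge 5$, and $C$ is induced, since a chord would produce a shorter odd cycle. The main tool is Lemma~\ref{lem:reduce}. For any matching $M$ that avoids the vertices of $C$ and their neighbours, the graph $G-N_e[M]$ still contains $C$ and is still well-edge-dominated. I would use this to peel away the rest of the graph and show that, locally, everything must look like $C$ plus very few extra vertices.

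\textbf{Step 1: bound the length of $C$.} An induced cycle $C_m$ on its own is well-edge-dominated only for $m\in\{3,4,5,7\}$. Indeed, for large $m$ there are maximal matchings of different sizes: $\gamma'(C_m)=\lceil m/3\rceil$, while $\alpha'(C_m)=\lfloor m/2\rfloor$. So I want to reduce to a situation where $C$ is essentially all that remains.

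\textbf{Step 2: find the reducing matching.} Take a maximal matching $M$ among the edges with no endpoint in $N[V(C)]$. In $H=G-N_e[M]$, every vertex outside $V(C)$ lies in $N(V(C))$ or has all its neighbours inside $N(V(C))$. Now use equimatchability of $H$ directly. If some vertex $x\notin C$ is adjacent to $c\in V(C)$, then $x$ has at most two neighbours on $C$, because $g\ge 4$ and $C$ is a shortest odd cycle. I would then exhibit two maximal matchings of different sizes:
\begin{enumerate}
\item[(a)] one that uses $xc$ and then greedily covers what is left of $C$ (a path);
\item[(b)] one that matches $C$ almost perfectly, leaving a single vertex uncovered, and then deals with $x$.
\end{enumerate}
Comparing the two sizes should force $|C|\in\{5,7\}$ and force strong restrictions on the attachments. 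For $|C|=7$, the only extra structure allowed should be a chord-like path. Since the graph is triangle-free, a chord joining vertices at distance 3 creates an odd cycle of length 5, which is shorter than 7. So that configuration must be handled by taking $C$ to be the 5-cycle instead, and I would double-check consistency with $C_7^*$ here.

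\textbf{Step 3: show nothing else attaches.} The conclusion should be that $G=H$, i.e.\ $M=\emptyset$, and $V(G)=V(C)$ plus possibly the chord configuration. To show no vertex hangs off $C$, I would take a pendant-type attachment $x$ adjacent to $c$ and build two minimal edge dominating sets of different sizes. The first uses $xc$ together with a minimum edge dominating set of the path $C-c$. The second uses a maximum matching of $C$ avoiding $c$, plus an edge at $x$. This attachment analysis is also where the case $C_7^*$ would have to emerge.

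\textbf{Main obstacle.} I expect the hardest step to be this case analysis of how vertices at distance 1 or 2 from $C$ can attach while preserving well-edge-domination. Two points make it delicate. First, edge domination is not hereditary under simply deleting vertices; only the $N_e[M]$ reduction is available. Second, one must rule out long chains of bipartite structure glued onto $C$, which is where Lemma~\ref{lem:satA} on the bipartite remainder could be applied.
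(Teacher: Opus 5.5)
The paper gives no proof of this statement; it is quoted from Anderson, Kuenzel and Rall. So I am judging your sketch on its own, and it has a genuine gap. Everything after the setup is stated as expectation ("should force", "would have to emerge") rather than argued, and the mechanism you propose for Steps 2--3 does not do what you need.

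\textbf{The comparison in Step 2.} Your constructions (a) and (b) do give a clean contradiction in the bare graph $C$ plus one pendant vertex $x$ at $c_0$. There, $\{xc_0,c_1c_2,\dots,c_{2k-1}c_{2k}\}$ has $k+1$ edges, while $\{c_0c_1,\dots,c_{2k-2}c_{2k-1}\}$ is already maximal with $k$ edges. But this excludes a pendant for \emph{every} $k$, so it says nothing about $|C|\in\{5,7\}$. That restriction only comes from your Step 1, once the component of $H$ is known to be $C$ itself.

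In general both sets must be completed to maximal matchings (or minimal edge dominating sets) of all of $H$. $H$ still contains the other vertices of $N(V(C))$ and the layer of distance-two vertices that your Step 2 deliberately keeps, and you have no control over the sizes of those completions. This is not a technicality. In $C_7^*$, with $C$ its $5$-cycle, every maximal matching has three edges, so no comparison of this type can separate the one admissible attachment from the forbidden ones.

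You have also misplaced that attachment. When $|C|=7$ nothing may attach at all. $C_7^*$ arises from $|C|=5$ by a path $c\,x\,y\,c'$ of length $3$ joining two \emph{adjacent} vertices $c,c'$ of $C$; it is not a chord-like path on a $7$-cycle. The missing content is exactly the case analysis you list as the main obstacle:
\begin{itemize}
\item vertices with two neighbours $c_{i-1},c_{i+1}$ on $C$;
\item paths of length $3$ through $N(V(C))$ between cycle vertices;
\item the distance-two vertices.
\end{itemize}

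\textbf{Lifting back from $H$ to $G$.} Lemma~\ref{lem:reduce} only transfers well-edge-domination from $G$ down to $H=G-N_e[M]$, and your claim that one ends with $M=\emptyset$ is unsupported. If the component of $H$ containing $C$ is $C$ itself, you are fine: edges at $V(C)$ are never deleted, so connectivity gives $G=C$. But the off-cycle vertices of a $C_7^*$ component may have lost edges to $V(M)$.

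For instance, write $C_7^*$ as $v_0v_1\cdots v_6v_0$ plus the chord $v_0v_3$, and hang a path $v_1pq$ on it. Then $M=\{pq\}$ and the component of $H$ containing $C$ is exactly $C_7^*$. Yet $\{v_1p,v_3v_4,v_5v_6\}$ is a maximal matching of $G$, while $\{pq,v_1v_2,v_0v_6,v_3v_4\}$ is a larger matching, so $G$ is not well-edge-dominated. A separate argument in $G$ is therefore needed to show that $C_7^*$ cannot be extended.

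\textbf{Two smaller points.}
\begin{itemize}
\item Connectivity has to be a hypothesis, not something obtained component by component: $C_5\cup K_2$ is well-edge-dominated, nonbipartite and triangle-free, but is not in the list.
\item The girth-at-least-$5$ theorem of Frendrup, Hartnell and Vestergaard quoted in the introduction applies directly, since well-edge-dominated graphs are equimatchable. It disposes of girth at least $5$ at once, so the real work is the girth-$4$ case, which is exactly where the missing attachment analysis lives.
\end{itemize}
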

  
Then in \cite{BCKKPRV-2025}, a characterization of well-edge-dominated graphs containing exactly one triangle was provided, where $\mathcal{T}$ and $\mathcal{F}$ are infinite classes of well-edge-dominated graphs.

 \begin{theorem}\cite{BCKKPRV-2025}\label{thm:onetriangle} $G$ is a well-edge-dominated graph with exactly one triangle if and only if $G \in \mathcal{T}\cup \mathcal{F} \cup \{K_3, Cr, \mathcal{H}, \mathcal{DH}\}$.
 \end{theorem}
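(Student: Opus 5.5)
This theorem is quoted from \cite{BCKKPRV-2025}, and the families $\mathcal{T}$ and $\mathcal{F}$ are not defined here. A proof plan therefore has to work at the level of how such a characterization is typically proved, reconstructing the families from the argument.

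\textbf{Sufficiency.} For each graph in $\mathcal{T}\cup\mathcal{F}\cup\{K_3, Cr, \mathcal{H}, \mathcal{DH}\}$ I would check directly that every minimal edge dominating set has size $\gamma'(G)$. The four sporadic graphs are small and can be checked by hand. For the infinite families I would use induction on the number of vertices:
\begin{itemize}
\item take a minimal edge dominating set $F$;
\item pick an edge of $F$ near the triangle, or a pendent edge;
\item note that removing its closed neighbourhood yields a smaller member of the family, or a bipartite well-edge-dominated graph;
\item then check that $F$ restricted to the remainder is still minimal there.
\end{itemize}

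\textbf{Necessity.} Let $G$ be well-edge-dominated with exactly one triangle $T=xyz$. The main tool is Lemma~\ref{lem:reduce}. For any matching $M$ avoiding the triangle, $G-N_e[M]$ is again well-edge-dominated. If it still contains $T$, it is a smaller graph with exactly one triangle, so induction applies. If it loses $T$, then its components are either bipartite or, by Theorem~\ref{thm:girth5red}, lie in $\{C_5,C_7,C_7^*\}$. The steps would be as follows.
\begin{enumerate}
\item Show that a well-edge-dominated graph has no long induced paths or long odd cycles through $T$. Otherwise I would build two minimal edge dominating sets of different sizes, typically a maximal matching versus a set using two edges of $T$. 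This bounds the distance of every vertex from $T$.
\item Classify the neighbourhoods of $x,y,z$. Each triangle vertex either has degree $2$ or carries pendent edges or attached bipartite pieces. Here Lemma~\ref{lem:satA} helps: taking $M$ as edges saturating one side of an attached bipartite part controls which structures survive.
\item Use the deletion lemma on a matching that destroys the triangle. The leftover must be bipartite well-edge-dominated, which by the $K_{2,2}$/support-vertex lemma has rigid structure. Gluing the pieces back gives the families $\mathcal{T}$ and $\mathcal{F}$. Configurations that do not fit are either the small exceptions $Cr$, $\mathcal{H}$, $\mathcal{DH}$, or are ruled out by producing a minimal edge dominating set larger than a maximal matching.
\end{enumerate}

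\textbf{Main obstacle.} The hardest part is the final gluing and exhaustiveness step. Several configurations are locally consistent, and one must certify that the case analysis is complete. I would organise it by the number of triangle vertices of degree at least $3$ (zero, one, two or three) and handle each case separately.
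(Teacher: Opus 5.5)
The paper gives no proof of this statement; it is quoted from \cite{BCKKPRV-2025}. Your plan therefore has to stand on its own, and it has a genuine gap. The one concrete structural claim in your necessity argument is Step~1: that well-edge-domination bounds the distance of every vertex from $T$. This is false.

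Let $G'$ be the caterpillar obtained from the path $a_1c_1a_2c_2\cdots c_{m-1}a_m$ by attaching a leaf $\ell_i$ to each $a_i$ and a second leaf $\ell'$ to $a_1$. Glue a triangle $\ell'gh$ onto $\ell'$. Every edge of the resulting graph is either a triangle edge or incident to exactly one $a_i$. A minimal edge dominating set $F$ must contain an edge at each $a_i$ (to dominate $a_i\ell_i$) and a triangle edge (to dominate $gh$). It cannot contain two edges at the same $a_i$, or two triangle edges, because then neither has a private edge neighbour. Hence every minimal $F$ has exactly $m+1$ edges. So the graph is well-edge-dominated with exactly one triangle, yet $\ell_m$ is at distance $2m$ from $T$.

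This is the typical situation, not an accident. Read the construction of $\mathcal{G}$ preceding Theorem~\ref{thm:K4free} and keep only the members with exactly one triangle. Its infinite one-triangle members, which $\mathcal{T}\cup\mathcal{F}$ must contain, are $K_3$ (resp.\ $\mathcal{H}$) glued by its nose onto an \emph{arbitrary} connected bipartite well-edge-dominated $G'=(A\cup B,E)$ with $|A|<|B|$. The gluing vertex is some $x\in B$ such that $G'-x$ is well-edge-dominated, has no trivial components, and satisfies $\gamma'(G'-x)=\gamma'(G')$. In the $\mathcal{H}$ case, additionally every neighbour of $x$ must be a support vertex of $G'-x$. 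There is no bounded region around $T$ to classify, so Steps~1 and~2 cannot carry the argument.

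The missing idea is this detachability criterion, and without it neither direction is proved. The sentence ``gluing the pieces back gives $\mathcal{T}$ and $\mathcal{F}$'' asserts the conclusion for families you never pin down.

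Your sufficiency induction also breaks at exactly the step you gloss over. Lemma~\ref{lem:reduce} passes well-edge-domination from $G$ down to $G-N_e[M]$, not up. Moreover, for a minimal edge dominating set $F$ of $G$ and $f\in F$:
\begin{itemize}
\item the set $F\cap E(G-N_e[f])$ need not dominate $G-N_e[f]$, since some of its edges may be dominated only by edges of $F\cap N_e(f)$;
\item $|F\cap N_e[f]|$ can exceed $1$.
\end{itemize}
So neither domination, nor minimality, nor the count transfers automatically. Handling this is exactly what the hypotheses on $G'-x$ are for, just as the $K_4$-Operation needs its private-neighbour condition in this paper.

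The criterion is sharp. In $P_5=b_1a_1b_2a_2b_3$ we have $\gamma'(P_5-b_1)=1<2=\gamma'(P_5)$. Gluing a triangle $b_1gh$ at $b_1$ yields the minimal edge dominating sets $\{b_1g,b_2a_2\}$ and $\{gh,a_1b_2,a_2b_3\}$, of different sizes. Gluing it instead at the detachable vertex $b_2$ yields a well-edge-dominated graph.

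A correct proof must establish both directions around this criterion:
\begin{itemize}
\item gluing at a (strongly) detachable vertex preserves well-edge-domination;
\item a one-triangle well-edge-dominated graph outside $\{K_3,Cr,\mathcal{H},\mathcal{DH}\}$ always splits off its triangle (or house) at such a vertex.
\end{itemize}
Your outline contains neither argument.
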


\begin{figure}[h!]
\begin{center}
\begin{tikzpicture}[scale=.7]
    \vertex (1) at (0,0)  [fill, scale=.75, label=below:$a$]{};
    \vertex (2) at (0,2) [fill, scale=.75, label=left:$c$]{};
    \vertex (3) at (3,0)  [fill, scale=.75, label=below:$b$]{};
    \vertex (4) at (3,2)   [fill, scale=.75, label=right:$d$]{};
    \vertex (5) at (0,4) [fill, scale=.75, label=left:$x$]{};
    \vertex (6) at (3,4)  [fill, scale=.75, label=right:$y$]{};
    \vertex (7) at (1.5, 5)  [fill, scale=.75, label=above:$z$]{};

    \vertex (8) at (6,0) [fill, scale=.75]{};
    \vertex (9) at (9,0)  [fill, scale=.75]{};
    \vertex (10) at (6,2)  [fill, scale=.75]{};
    \vertex (11) at (9,2)  [fill, scale=.75]{};
    \vertex (12) at (7.5,3)  [fill, scale=.75]{};
    
     \vertex (1a) at (14,3)  [scale=.75, fill=black, label=above:$a$]{};
    \vertex (2a) at (12.5,1.5)  [scale=.75, fill=black, label=left:$d$]{};
    \vertex (3a) at (14,0)  [scale=.75, fill=black, label=below:$c$]{};
    \vertex (4a) at (15.5,1.5)  [scale=.75, fill=black, label=above:$b$]{};
    \vertex (5a) at (17.5,1.5)  [scale=.75, fill=black, label=above:$y$]{};
    \vertex (6a) at (19,3)  [scale=.75, fill=black, label=above:$x$]{};
    \vertex (7a) at (19,0)  [scale=.75, fill=black, label=below:$z$]{};
    \node(A) at (1.4, -1)[]{(a) $\mathcal{DH}$};
    \node(B) at (7.5, -1)[]{(b) $\mathcal{H}$};
    \node(C) at (17, -1)[]{(c) $Cr$};

    \path 
    (1) edge (2)
    (1) edge (3)
    (4) edge (2)
    (4) edge (3)
    (5) edge (2)
    (6) edge (4)
    (7) edge (6)
    (7) edge (5)
    (5) edge (6)

    (8) edge (9)
    (8) edge (10)
    (11) edge (10)
    (11) edge (9)
    (11) edge (12)
    (12) edge (10)
    
       (1a) edge (2a)
    (2a) edge (3a)
    (3a) edge (4a)
    (4a) edge (1a)
    (4a) edge (5a)
    (5a) edge (6a)
    (5a) edge (7a)
    (6a) edge (7a)
    (6a) edge (1a)
    (7a) edge (3a)
    ;

\end{tikzpicture}
\caption{The dream house (a), the house graph (b) and the crystal graph (c)}
\label{fig:houses}
\end{center}
\end{figure}

The authors provided a complete characterization of all $K_4$-free well-edge-dominated graphs with girth $3$ in \cite{AK-2026} based on the infinite classes of well-edge-dominated graphs containing exactly one triangle.  It relied on a class of graphs referred to as  $\mathcal{P}, \mathcal{W}$, and $\mathcal{G}$ and defined as follows. First note that the \emph{diamond graph} is the graph obtained from the complete graph $K_4$ by removing an edge. Given a diamond $D$ with vertices $\{u, v, w, x\}$ such that $ wx \not\in E(D)$, we refer to $u$ and $v$ as the \emph{interior vertices of $D$} and we refer to $w$ and $x$ as the \emph{exterior vertices of $D$}. We define the class $\mathcal{P}$ (the propellers) to be the class of graphs constructed as follows. Let $T_1, \dots, T_k$ be $k$ disjoint triangles and for each $i \in [k]$ label one vertex of triangle $T_i$ by $v_i$. Then $G \in \mathcal{P}$ if it is obtained from $T_1, \dots, T_k$ by identifying each vertex in $\{v_1, \dots, v_k\}$. Similarly, we define the class $\mathcal{W}$ as follows. Let $\mathcal{H}$ be the house graph and label the vertex on the triangle with degree $2$ $x$. Then $G \in \mathcal{W}$ if it is obtained from $T_1, \dots, T_k, \mathcal{H}$ by identifying each vertex in $\{v_1, \dots, v_k, x\}$. The identified vertex will be referred to as the ``nose" of the windmill/propeller.  Technically, $K_3$ is also a propeller with a nose and $\mathcal{H}$ is a windmill with a nose.

Next, the class $\mathcal{G}$ is built from the classes $\mathcal{P} \cup \mathcal{W}$ as follows. First, let $K^*$ be the graph obtained from the disjoint union of a diamond and $K_2$ by identifying a vertex of $K_2$ with an exterior vertex of the diamond. Then $\mathcal{G}$ contains $\mathcal{P}\cup \mathcal{W} \cup \{K^*\}$. All other graphs $G \in \mathcal{G}$ can be constructed in the following way. Let $G'=(A \cup B, E')$ be a connected, nontrivial, bipartite and well-edge-dominated graph with $|A| < |B|$.  Choose a set $B' \subset B$ and a set $Y \subseteq A$ such that the following are true:
\begin{itemize}
\item $B'\cup Y \ne \emptyset$.
\item  $G'- B'$ is a well-edge-dominated graph with $\gamma'(G'-B') = \gamma'(G')$, $G'-B'$ has no trivial components, and $|A| \le |B-B'|$. 
\item If $Y\ne \emptyset$, then we write $Y = \{y_1, \dots, y_{\ell}\}$ where $y_i$ is a support vertex in $G'- B'$. 
\end{itemize}
 Each vertex $v$ in $B'$ is called detachable and it is called strongly detachable if each neighbor of it in $G'$ is a support vertex in $G'-B'$.  If $Y\ne \emptyset$, let $D_1, \dots, D_{\ell}$ be a disjoint union of diamonds, and for each $D_i$, label one of the vertices of degree two as the ``nose" of the diamond. If $B' \ne\emptyset$, then choose a set $B'' \subseteq B'$ (possibly empty) where each vertex in $B''$ is strongly detachable. If $B'' \ne \emptyset$, then write $B'' = \{s_1, \dots, s_k\}$ and choose any set of $k$ windmills, enumerated as $W_1, \dots, W_k$. If $B' - B'' \ne \emptyset$, write $B' - B'' = \{x_1, \dots, x_r\}$ and choose any set of $r$ propellers, enumerated as $P_1, \dots, P_r$.  We obtain $G$ from $G'$ under the following rules: 
\begin{enumerate}
\item[(a)] The nose of $W_i$ is identified with $s_i$ in $B''$.
\item[(b)] The nose of $P_i$ is identified with $x_i$ in $B' - B''$. 
\item[(c)] The nose of $D_i$ is identified with the support vertex $y_i$ of $G'$ (which stays a support vertex in $G$). 
\end{enumerate}

Based on the above description, the following was shown.

\begin{theorem}\label{thm:K4free}\cite{AK-2026} Let $G$ be $K_4$-free. $G$ is well-edge-dominated with girth $3$ if and only if $G \in \{ \mathcal{DH}, F_5,Cr, W_1, W_2, W_3\}$ or $G \in \mathcal{G}$.
\end{theorem}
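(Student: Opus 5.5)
The plan is to prove the two directions separately, with the reverse (characterization) direction by induction on the number of triangles. For sufficiency, the sporadic graphs $\mathcal{DH}, F_5, Cr, W_1, W_2, W_3$ are small, and I would verify them directly by listing their minimal edge dominating sets. For $G\in\mathcal{G}$ the approach is structural. Let $F$ be a minimal edge dominating set of $G$ and split it into three parts: the edges lying in the attached propellers, windmills and diamonds, and the edges lying in $G'$. Each propeller, windmill or diamond is itself well-edge-dominated, with $\gamma'$ equal to its number of triangles (for $\mathcal{H}$, the value is $2$). So $F$ uses exactly that many edges inside each attached piece, whether or not the nose is covered from outside. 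Then $F\cap E(G')$ has to be a minimal edge dominating set of $G'$ or of $G'-B'$. The hypothesis $\gamma'(G'-B')=\gamma'(G')$, together with both graphs being well-edge-dominated, forces $|F\cap E(G')|=\gamma'(G')$. Strong detachability handles the windmill case: there an edge at $s_i$ may be dominated only through the house. The support vertex condition on $y_i$ handles the diamonds.

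For necessity, let $G$ be $K_4$-free and well-edge-dominated with girth $3$. If $G$ has exactly one triangle, Theorem~\ref{thm:onetriangle} applies, and we check that $\mathcal{T}\cup\mathcal{F}\cup\{K_3,Cr,\mathcal{H},\mathcal{DH}\}$ lies inside the claimed list; presumably $\mathcal{T}$ and $\mathcal{F}$ are the one-triangle members of $\mathcal{G}$. Otherwise, fix a triangle $T$. Lemma~\ref{lem:reduce} then becomes the main tool: for a suitable matching $M$ (an edge of $T$, or an edge at distance one from $T$), the graph $G-N_e[M]$ is still well-edge-dominated and has fewer triangles. I would use this to pin down the local structure around each triangle. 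Comparing a maximal matching that contains an edge of $T$ with one that avoids $T$ should force the following: every triangle either shares a single cut vertex (a nose) with other triangles, forming a propeller, or lies in a house or in a diamond hanging at a support vertex. In the same way, the subgraph $G'$ obtained by deleting the non-nose vertices of all these pieces must be bipartite. By Theorem~\ref{thm:girth5red}, odd cycles of length $5$ or $7$ can survive only in $C_5$, $C_7$ or $C_7^*$, and these will not fit with any attached triangle except in the sporadic cases such as $F_5$ and $W_i$.

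The remaining step is to check that $G'$ satisfies the defining conditions of $\mathcal{G}$. Applying Lemma~\ref{lem:reduce} to a matching that consists of one edge at each nose, on the far side of the attachment, should give that $G'-B'$ is well-edge-dominated with no trivial components. Constructing matchings via Lemma~\ref{lem:satA} should then give $|A|\le|B-B'|$ and $\gamma'(G'-B')=\gamma'(G')$. Building a minimal edge dominating set that leaves a neighbor of $s_i$ dominated only through the house should force strong detachability. I expect the main obstacle to be the local analysis in the previous paragraph: proving that distinct triangles, or a triangle together with a nearby $C_4$, interact only through a single nose vertex or through a diamond at a support vertex. This part needs a careful case analysis on how two triangles can be joined by short paths. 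At each case, the idea I would try first is to exhibit two maximal matchings of different sizes.
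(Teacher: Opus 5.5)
This theorem is quoted from \cite{AK-2026}, and the present paper gives no proof of it, so your proposal has to stand on its own. As written it has genuine gaps in both directions.

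\textbf{Sufficiency.} Your accounting rests on claims that are false as stated. The claim is correct for pendant triangles, since any edge of such a triangle dominates the whole triangle, but it fails for the other pieces.

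\emph{Diamonds.} A diamond is not well-edge-dominated: its interior edge dominates everything, while $\{uw,vx\}$ is a maximal matching. It also has two triangles but contributes only $1$ to $\gamma'$ in \eqref{eqn:1}. Moreover, the number of edges of $F$ inside a diamond is not fixed. Take $G'=K_{1,2}$ with center $y$ and leaves $b_1,b_2$, and hang at $y$ a diamond with interior vertices $u,v$ and far exterior vertex $w$. Then $G\in\mathcal{G}$ and $\gamma'(G)=2$. Both $\{yb_1,uv\}$ and $\{yu,yv\}$ are minimal edge dominating sets. For the second one, $F\cap E(G')=\emptyset$, which is not an edge dominating set of $G'=G'-B'$.

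\emph{The general mechanism.} Nose edges of pieces do dominating work inside $G'$. Conversely, an edge of $G'$ at a nose can have its only private neighbour inside a piece. For a house (labels as in the paper), $\{c_id_i,d_ia_i\}$ leaves $z_ib_i$ as the sole private neighbour of an outside edge $z_ip$. Also, $\{z_ia_i,a_ib_i,a_id_i\}$ meets the house in three edges whenever $z_ia_i$ has a private neighbour outside.

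\emph{Strong detachability.} Strong detachability is exactly what neutralizes these effects: each such $p$ carries a pendant edge that must be dominated at $p$. Your argument never actually uses it. Let $G'$ be the $4$-cycle $a_1b_1a_2b_2a_1$ plus the pendant edge $a_2b_3$, and attach a house on $b_3,h_1,h_2,h_3,h_4$ (triangle $b_3h_1h_2$, $4$-cycle $h_1h_2h_3h_4h_1$). Here $b_3$ is detachable but not strongly detachable, so the result is not in $\mathcal{G}$. Every hypothesis entering your accounting still holds: the house is well-edge-dominated, and $G'$ and $G'-b_3=C_4$ are well-edge-dominated with $\gamma'=2$. Yet this graph has $\gamma'=4$, while $\{a_2b_1,a_2b_2,a_2b_3,h_3h_4,h_4h_1\}$ is a minimal edge dominating set of size $5$. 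In it, $F\cap E(G')$ is an edge dominating set of $G'$ but not a minimal one.

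What is needed is an exchange argument that converts $F$ into a minimal edge dominating set of $G'-B'$ while tracking these transfers. That argument is where the support-vertex condition on $Y$, detachability, and strong detachability each get used.

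\textbf{Necessity.} Here the proposal is a roadmap whose central step is missing. The claim you defer as ``the main obstacle'' is essentially the whole theorem: every triangle lies in a propeller triangle, a windmill, or a diamond hung at a support vertex, with a bipartite core left over. ``Comparing a maximal matching that contains an edge of $T$ with one that avoids $T$'' does not produce this. You also never say how the inductive description of the components of $G-N_e[M]$ is lifted back to $G$, which is where all the case analysis lives.

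Theorem~\ref{thm:girth5red} says nothing directly about odd cycles in a graph of girth $3$. It can only be applied to a triangle-free well-edge-dominated graph that you have first produced, for instance $G'=G-N_e[M]$ with $M$ the interior edges of the pieces. That again presupposes the structure.

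A workable template is the one this paper uses for the $K_4$ case:
\begin{enumerate}
\item Take a minimum counterexample and choose an edge $st$ as far as possible from a triangle.
\item Apply Lemma~\ref{lem:reduce} to $G-N_e[st]$ and identify its large component by minimality.
\item Either recognize $G$ as arising by one of the attachment rules, or exhibit two minimal edge dominating sets of different sizes.
\end{enumerate}
Your closing checks ($G'$ well-edge-dominated, $|A|\le|B-B'|$, $\gamma'(G'-B')=\gamma'(G')$, strong detachability) are sensible once the structure is known, but they presuppose it.
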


It turns out that the class $\mathcal{G}$ is the backbone of all graphs that are nonbipartite and well-edge-dominated. In the next section, we describe how to obtain a well-edge-dominated graph that contains an induced $K_4$. However, we first point out that while it was not explicitly stated in \cite{AK-2026}, we can calculate the edge domination number of $G \in \mathcal{G}$ based on how it was constructed. That is, suppose that $G$ was constructed from the connected, nontrivial, bipartite and well-edge-dominated graph $G'= (A\cup B, E')$ where $|A| < |B|$. Then a set $B' \subset B$ and $Y \subseteq A$ was chosen so that  \begin{itemize}
\item $B'\cup Y \ne \emptyset$.
\item  $G'- B'$ is a well-edge-dominated graph with $\gamma'(G'-B') = \gamma'(G')$, $G'-B'$ has no trivial components, and $|A| \le |B-B'|$. 
\item If $Y\ne \emptyset$, then we write $Y = \{y_1, \dots, y_{\ell}\}$ where $y_i$ is a support vertex in $G'- B'$. 
\end{itemize}
If $Y\ne \emptyset$, we enumerate $Y = \{y_1, \dots, y_{\ell}\}$ and let $D_1, \dots, D_{\ell}$ be the disjoint set of diamonds such that the nose of $D_i$ was identified with $y_i$. If $B'' \ne \emptyset$, we enumerate $B'' = \{s_1, \dots, s_k\}$ and let $W_1, \dots, W_k$ be the disjoint set of windmills such that the nose of $W_i$ was identified with $s_i$. If $B' - B'' \ne \emptyset$, we enumerate $B' - B'' = \{x_1, \dots, x_r\}$ and let $P_1, \dots, P_r$ be the set of disjoint propellers such that the nose of $P_i$ was identified with $x_i$. We can find a maximal matching in $G$ as follows. First, let $M$ be a maximal matching in $G'$ that saturates $A$ and contains all pendent edges in $G'$. If $Y\ne \emptyset$, let the interior vertices of $D_i$ be $u_i$ and $v_i$ so that $u_iv_i$ edge dominates all edges of $D_i$. If $B''\ne \emptyset$, we enumerate the vertices of the house graph of $W_i$ as $z_i, a_i, b_i, c_i$, and $d_i$ where $a_ib_ic_id_ia_i$ is a $4$-cycle and $z_i$ is the nose of the windmill, and all remaining triangles of $W_i$ are of the form $r_j^is_j^iz_i$ for $j \in [m_i]$. Finally, if $B'-B'' \ne \emptyset$, then we label the nose of $P_i$ as $z_i$ and all triangles of $P_i$ are of the form $g_j^ih_j^iz_i$ for $j \in [n_i]$. Thus, 
\[M \cup \{u_iv_i: i \in [\ell]\} \cup \{a_ib_i, c_id_i: i \in [k]\} \cup \left(\bigcup_{i=1}^k\{r_j^is_j^i: j \in [m_i]\}\right) \cup \left(\bigcup_{i=1}^r\{g_j^ih_j^i: j \in [n_i]\}\right)\] is a maximal matching in $G$. Therefore, 
\begin{equation}\label{eqn:1}
\gamma'(G) = \gamma'(G') + \overline{\ell} + \overline{k}+\overline{r}
\end{equation}
where
\begin{itemize}
\item $\overline{\ell} = \ell$ if $Y\ne \emptyset$; otherwise $\overline{\ell} =0$.
\item $\overline{k} = 2k + \sum_{i=1}^k(n(W_i)-5)/2 $ if $B''\ne \emptyset$; otherwise $\overline{k} =0$.
\item $\overline{r} = \sum_{i=1}^r(n(P_i)-1)/2$ if $B'-B''\ne \emptyset$; otherwise $\overline{r}=0$.
\end{itemize}

\section{Characterizing all nonbipartite well-edge-dominated graphs}

We are now at a point to finish characterizing all nonbipartite well-edge-dominated graphs by considering those with girth $3$ that also contain an induced $K_4$. To do so, we build upon the class $\mathcal{G}$ in the following way. Define the class $\mathcal{G^*}$ to be those graphs in $\mathcal{G}$ together with any graph $G$ which can be obtained in the following way. Let $\mathcal{B}$ represent the set of all connected, nontrivial, bipartite, and well-edge-dominated graphs such that if $J \in \mathcal{B}$, then we can write $J = (A_J\cup B_J, E_J)$ where $|A_J|<|B_J|$. Then $G\in \mathcal{G^*}$ if it is obtained from a graph $H \in \mathcal{G}\cup \mathcal{B}$ of order at least three by performing the following operation:
\vskip5mm
\noindent\textbf{The $K_4$-Operation:} Pick a pendent edge $uv \in E(H)$ where $u$ is the support vertex, $u$ is not on a triangle in $H$, and there is no minimal edge dominating set $F$ of $H$ containing an edge of the form $ut$ where $ut$ has a private edge neighbor of the form $ts$ for some $s\ne u$. Add vertices $w$ and $x$ as well as edges $wx, wu, wv, xu$, and $xv$.
\vskip5mm

All remaining graphs in $\mathcal{G^*}$ can be obtained from a graph $H \in \mathcal{G^*}$ by performing a $K_4$-Operation on $H$. Equivalently, we can view every graph in $\mathcal{G^*}$ as being obtained from a graph in $\mathcal{G} \cup \mathcal{B}$ by repeatedly performing the $K_4$-Operation.

In one direction, we show that if $G\in \mathcal{G^*}$, then $G$ is well-edge-dominated. To do so, we use the following observation that follows from Lemma \ref{lem:satA}.

\begin{obs} If $G = (A\cup B, E)$ is a well-edge-dominated bipartite graph where $|A| \leq |B|$, then any minimal edge dominating set $F$ satisfies $|F| = |A|$.
\end{obs}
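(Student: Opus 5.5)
The observation should follow almost immediately from the definitions together with Lemma~\ref{lem:satA}, so the argument is short. Let $G = (A\cup B, E)$ be well-edge-dominated and bipartite with $|A|\le |B|$, and let $F$ be any minimal edge dominating set of $G$. Since $G$ is well-edge-dominated, every minimal edge dominating set has cardinality $\gamma'(G)$, so in particular $|F| = \gamma'(G)$. It therefore suffices to exhibit one minimal edge dominating set whose cardinality is $|A|$.

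For this I will invoke Lemma~\ref{lem:satA}, which provides a maximal matching $M$ of $G$ that saturates $A$. Because $G$ is bipartite, every edge of $M$ has exactly one endpoint in $A$, and distinct edges of $M$ have distinct $A$-endpoints since $M$ is a matching. Saturation means every vertex of $A$ is covered, so $|M| = |A|$. A maximal matching is always a minimal edge dominating set. It is edge dominating by maximality: any edge not in $M$ shares a vertex with some edge of $M$. It is minimal because removing an edge $e\in M$ leaves $e$ itself undominated, as the remaining edges of $M$ are disjoint from $e$. Hence $\gamma'(G) = |M| = |A|$, and so $|F| = |A|$.

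The only point needing care is the counting $|M| = |A|$. It uses both bipartiteness, so that each edge has exactly one end in $A$, and the saturation guaranteed by Lemma~\ref{lem:satA}. I expect no real obstacle beyond quoting that lemma correctly. In particular, I will not need the hypothesis $|A|\le|B|$ directly, since it is used only inside Lemma~\ref{lem:satA}.
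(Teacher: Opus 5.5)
Your proof is correct and follows the paper's route. The paper gives no separate proof and simply says the observation follows from Lemma~\ref{lem:satA}: a maximal matching saturating $A$ is a minimal edge dominating set of size $|A|$, and well-edge-domination forces every minimal edge dominating set to have that size. Your explicit count showing that the saturating matching has exactly $|A|$ edges is just the detail the paper leaves implicit.
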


\begin{lemma} For any $G \in \mathcal{G^*}$, $G$ is well-edge-dominated. 
\end{lemma}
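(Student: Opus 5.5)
Since every graph in $\mathcal{G}$ is well-edge-dominated by Theorem~\ref{thm:K4free}, and every graph in $\mathcal{B}$ is well-edge-dominated by definition, I argue by induction on the number of $K_4$-Operations used to build $G$. The claim to prove is: if $H$ is well-edge-dominated and $G$ comes from $H$ by one $K_4$-Operation on the pendent edge $uv$ (adding $w,x$), then $G$ is well-edge-dominated with $\gamma'(G)=\gamma'(H)+1$. For the induction to close, the hypothesis on $u$ must still make sense in $G$. In $G$ the vertex $u$ lies on a triangle, so it cannot be reused, and $v$ is no longer a leaf. Hence any later operation uses a different pendent edge. I will also carry along the condition needed at that later step, described below.

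Fix a minimal edge dominating set $F$ of $G$. Let $Q=\{uv,uw,ux,vw,vx,wx\}$ be the edge set of the $K_4$ on $\{u,v,w,x\}$. Only $u$ has neighbours outside this $K_4$. Every edge of $Q$ must be dominated, and $wx$ is dominated only by edges of $Q$, so $F\cap Q\neq\emptyset$. Minimality also gives $|F\cap Q|\le 2$, since $wx$ together with any edge at $u$ already dominates all of $Q$.

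I split into cases according to whether $F$ contains an edge $ut$ with $t\notin\{v,w,x\}$.

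\textbf{Case 1.} No edge of $F$ at $u$ leaves the $K_4$. Then $F\cap Q$ is a single edge: either an edge at $u$, or $vw$, $vx$ or $wx$ together with nothing else from $Q$. Whichever edge it is, I replace it by $uv$. Then $F'=(F\setminus Q)\cup\{uv\}$ should be a minimal edge dominating set of $H$, giving $|F|=|F'|+\epsilon$ with $\epsilon\in\{0,1\}$. I have to adjust the accounting so that $|F|=\gamma'(H)+1$ always holds, and I expect this to force $|F\cap Q|=2$ in the configurations that actually occur. For example, if $F$ contains $wx$ but no edge at $u$, then the edges $uz$ with $z\notin\{v,w,x\}$ must be dominated from elsewhere, and $u$ must also be covered, which forces a second edge of $Q$ such as $uv$. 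The cleanest route is to map $F$ to $F'=(F\setminus Q)\cup\{uv\}$, check that $F'$ is a minimal edge dominating set of $H$, and then check that $|F\cap Q|=2$. Getting these checks right is a case analysis on private neighbours.

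\textbf{Case 2.} $F$ contains some $ut$ with $t\notin\{v,w,x\}$. Then $F\cap Q$ contains exactly one edge, and it must be $wx$, since any other edge of $Q$ would make $ut$ or that edge redundant. So consider $F'=F\setminus\{wx\}$, or $F'\cup\{uv\}$ if needed, in $H$. In $H$ the pendent edge $uv$ is dominated by $ut$. Minimality of $F$ in $G$ means $ut$ has a private edge neighbour, and that neighbour cannot lie in $Q$ because $wx$ dominates $Q$. So the private neighbour is either an edge $ts$ with $s\ne u$, or an edge $uz$.

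The private edge neighbour of the form $ts$ is exactly what the operation's hypothesis forbids for minimal edge dominating sets of $H$. I will show that $F'$ is a minimal edge dominating set of $H$ containing $ut$, where $ut$ has a private neighbour $ts$, and this contradicts the hypothesis. I expect this to be the main obstacle. Minimality of $F'$ in $H$ needs care, because $uv$ is now dominated only through $u$, and the private neighbour $ts$ may have been private only in $G$.

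The remaining subcase is when every private neighbour of $ut$ is of the form $uz$. Here I instead replace $ut$ by $uv$ and $wx$ by nothing, and argue that we recover the count $|F|=\gamma'(H)+1$.

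Finally I must verify that the hypothesis of the operation is preserved in $G$ for later operations. That is, a pendent edge $u'v'$ of $H$ that satisfies the condition in $H$ still satisfies it in $G$. This follows from the same correspondence between minimal edge dominating sets of $G$ and those of $H$ established above.
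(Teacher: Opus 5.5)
There are genuine gaps in both cases, and the one in Case 1 cannot be closed: the statement is false as written.

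\textbf{Case 2.} Your assertion that $F\cap Q$ is a single edge is wrong. $F$ may contain $ut$ together with two $Q$-edges at $u$, say $uv$ and $uw$. Then $uv$ privately dominates $vx$, and $uw$ privately dominates $wx$. The edge $ut$ survives because it has a private neighbour $ts$ with $s\neq u$; it cannot be privately dominating anything at $u$, since $uv$ already dominates every edge at $u$. Here $|F|=|F\setminus Q|+2$, and $F\setminus Q$ is a minimal edge dominating set of $H$ in which $ut$ keeps the private neighbour $ts$. This is exactly the configuration the hypothesis of the $K_4$-Operation rules out, and it is the one place the paper's proof uses that hypothesis.

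You spend the hypothesis on the case $F\cap Q=\{wx\}$ instead, where nothing is needed. There $wx$ dominates every edge of $Q$ except $uv$, and $ut$ still dominates $uv$ in $H$. So $F\setminus\{wx\}$ is already a minimal edge dominating set of $H$, and $|F|=\gamma'(H)+1$. Your worry about preserving the hypothesis for later operations is also unnecessary: the definition of $\mathcal{G^*}$ checks it in the current graph at every step.

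\textbf{Case 1.} Here $|F\cap Q|=1$ is impossible. With no external edge at $u$, no $Q$-edge is dominated from outside $Q$, and no single edge dominates $K_4$, so $|F\cap Q|=2$. Your fallback is that $(F\setminus Q)\cup\{uv\}$ is a minimal edge dominating set of $H$; this is the same unverified step the paper takes. It holds when $F\cap Q$ has an edge at $u$. It fails when $F\cap Q\subseteq\{vw,vx,wx\}$: then each edge $ut$ is dominated from $t$, and an edge $tq\in F$ may have $ut$ as its only private neighbour, which $uv$ then steals.

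This is not a repairable oversight. Let $H$ consist of the path $v\,u\,t\,q\,p\,r$, a leaf $\ell$ at $q$, and a $4$-cycle $r\,y_1\,z\,y_2\,r$. It is bipartite with parts $\{u,q,r,z\}$ and $\{v,t,\ell,p,y_1,y_2\}$. Every minimal edge dominating set of $H$ has:
\begin{itemize}
\item exactly one edge at $u$;
\item exactly one edge at $q$ (a second would have no private neighbour, since $ut$ is dominated at $u$ and only $qp$ meets $pr$);
\item exactly two of $pr,ry_1,ry_2,zy_1,zy_2$.
\end{itemize}
So $H\in\mathcal{B}$ with $\gamma'(H)=4$. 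For the edges at $u$, the only candidate private neighbour of the forbidden form is $tq$ (for $ut$). But $tq$ is always dominated by the edge at $q$, so the $K_4$-Operation on $uv$ is allowed.

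In the resulting $G\in\mathcal{G^*}$, the set $\{vw,wx,tq,qp,zy_1,zy_2\}$ is a minimal edge dominating set of size $6$, with private neighbours $uv,ux,ut,pr,ry_1,ry_2$ respectively. Meanwhile $\{uv,wx,tq,pr,zy_1\}$ is a maximal matching of size $5$. So $G$ is not well-edge-dominated. Neither your case analysis nor the paper's can establish the lemma unless the hypothesis of the $K_4$-Operation is strengthened to exclude this configuration.
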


\begin{proof} Note that from Theorem~\ref{thm:K4free}, we may assume that $\omega(G) = 4$. Let $H$ be an induced $K_4$ in $G$ with vertices $\{w, x, y, z\}$. We can view building $G$ by repeatedly performing  the $K_4$-Operation. So all we need to show is that when $G$ is obtained from $G' \in \mathcal{G^*}\cup \mathcal{B}$ by performing the $K_4$-Operation, then $G$ is also well-edge-dominated. Suppose  that $G' \in \mathcal{G^*}\cup \mathcal{B}$ and we obtain $G$ by performing the $K_4$-Operation on the  pendent edge $wy$  by adding vertices $x$ and $z$ and edges $wx, wz, xy, zy,$ and $xz$. Note that $\alpha'(G) = \alpha'(G')+1$ as any matching in $G'$ together with $xz$ is a matching in $G$. Therefore, $\Gamma'(G) \ge \alpha'(G')+1$. We need to show that any minimal edge dominating set $F$ of $G$ has cardinality $\alpha'(G')+1$, i.e. $\Gamma'(G) = \alpha'(G')+1$. Suppose to the contrary that $G$ contains a minimal edge dominating set $F$ with cardinality $\alpha'(G') + 2$. Note that $|F\cap \{xy, xw, xz, wz, wy, zy\}| \le 2$. Suppose first that $F$ contains an edge of the form $wb$ where $b \not\in \{x, y, z\}$. If $|F \cap \{xy, xw, xz, wz, wy, zy\}|=1$, then $F' = F - (F \cap \{xy, xw, xz, wz, wy, zy\})$ is a minimal edge dominating set of $G'$ meaning that $|F| - 1 = |F'| = \alpha'(G')$, which is a contradiction. On the other hand, if $|F \cap \{xy, xw, xz, wz, wy, zy\}|=2$, then we may assume that $F \cap \{xy, xw, xz, wz, wy, zy\}=\{wx, wy\}$. Moreover, $wb$ must have a private edge neighbor of the form $ba$. However, $F- \{wx, wy\}$ is a minimal edge dominating set of $G'$ where $wb$ has a private edge neighbor of the form $ba$, contradicting that we could perform the $K_4$-Operation on $wy$. Therefore, we shall assume that $F$ contains no such edge $wb$. It follows that  $|F \cap \{xy, xw, xz, wz, wy, zy\}| = 2$. In this case, $F' = (F - \{xy, xw, xz, wz, wy, zy\}) \cup \{wy\}$ is a minimal edge dominating set of $G'$. Hence, $|F|-1 = |F'| = \alpha'(G') $, yet another contradiction. It follows that any minimal edge dominating set of $G$ has cardinality $\alpha'(G') + 1$ and $G$ is well-edge-dominated.

\end{proof}

Next, we point out that there are three well-edge-dominated graphs, each with order $7$, that contain a $K_4$ and are not in the class $\mathcal{G^*}$. These graphs are named $J_1, J_2$, and $J_3$, depicted in Figure~\ref{fig:Js}. Moreover, $K_4$ is well-edge-dominated and is not in $\mathcal{G^*}$. This means that together with the six exceptions to the $K_4$-free well-edge-dominated graphs that both contain a triangle and are not in $\mathcal{G}$, we now have ten graphs that are well-edge-dominated, contain a triangle, and are not in $\mathcal{G^*}$. Four of them are $K_4, \mathcal{DH}, Cr$, and $F_5$. The remaining six are shown in Figure~\ref{fig:Js}. 

\begin{figure}[h!]
\begin{center}
\begin{tikzpicture}[scale=.75]
    \vertex (1) at (0,0)  [scale=.75, fill=black, label=below:$a$]{};
    \vertex (2) at (0,1)  [scale=.75, fill=black, label=left:$w$]{};
    \vertex (3) at (1, .5)  [scale=.75, fill=black,label=below:$y$]{};
    \vertex (4) at (1, 1.5)  [scale=.75, fill=black, label=above:$x$]{};
    \vertex (5) at (2, 1)  [scale=.75, fill=black, label=right:$z$]{};
    \vertex (6) at (2, 0)  [scale=.75, fill=black, label=below:$c$]{};
    \vertex (7) at (1, -.5)  [scale=.75, fill=black, label=below:$b$]{};
  
     \vertex (B1) at (4,0)  [scale=.75, fill=black, label=below:$a$]{};
    \vertex (B2) at (4,1)  [scale=.75, fill=black, label=left:$w$]{};
    \vertex (B3) at (5, .5)  [scale=.75, fill=black]{};
    \vertex (B4) at (5, 1.5)  [scale=.75, fill=black, label=above:$x$]{};
    \vertex (B5) at (6, 1)  [scale=.75, fill=black, label=right:$z$]{};
    \vertex (B6) at (6, 0)  [scale=.75, fill=black, label=below:$c$]{};
    \vertex (B7) at (5, -.5)  [scale=.75, fill=black, label=below:$b$]{};
    
       \vertex (C1) at (8,0)  [scale=.75, fill=black, label=below:$a$]{};
    \vertex (C2) at (8,1)  [scale=.75, fill=black, label=left:$w$]{};
    \vertex (C3) at (9, .5)  [scale=.75, fill=black,label=below:$y$]{};
    \vertex (C4) at (9, 1.5)  [scale=.75, fill=black, label=above:$x$]{};
    \vertex (C5) at (10, 1)  [scale=.75, fill=black, label=right:$z$]{};
    \vertex (C6) at (10, 0)  [scale=.75, fill=black, label=below:$c$]{};
    \vertex (C7) at (9, -.5)  [scale=.75, fill=black, label=below:$b$]{};

       \vertex (D1) at (12,0)  [scale=.75, fill=black, label=below:$a$]{};
    \vertex (D2) at (12,1)  [scale=.75, fill=black, label=left:$w$]{};
    \vertex (D3) at (13, .5)  [scale=.75, fill=black, label=below:$y$]{};
    \vertex (D4) at (13, 1.5)  [scale=.75, fill=black, label=above:$x$]{};
    \vertex (D5) at (14, 1)  [scale=.75, fill=black, label=right:$z$]{};
    \vertex (D6) at (14, 0)  [scale=.75, fill=black, label=below:$c$]{};
    \vertex (D7) at (13, -.5)  [scale=.75, fill=black, label=below:$b$]{};
    
       \vertex (1a) at (1.5,-5)  [scale=.75, fill=black, label=above:$y$]{};
    \vertex (2a) at (0,-6.5)  [scale=.75, fill=black, label=left:$x$]{};
    \vertex (3a) at (1.5,-8)  [scale=.75, fill=black, label=below:$w$]{};
    \vertex (4a) at (3,-6.5)  [scale=.75, fill=black, label=above:$z$]{};
    \vertex (5a) at (5,-6.5)  [scale=.75, fill=black, label=above:$b$]{};
    \vertex (6a) at (6.5,-5)  [scale=.75, fill=black, label=above:$c$]{};
    \vertex (7a) at (6.5,-8)  [scale=.75, fill=black, label=below:$a$]{};
    
       \vertex (1b) at (10,-8)  [fill, scale=.75, label=below:$x$]{};
    \vertex (2b) at (10,-6) [fill, scale=.75, label=left:$w$]{};
    \vertex (3b) at (13,-8)  [fill, scale=.75, label=below:$z$]{};
    \vertex (4b) at (13,-6)   [fill, scale=.75, label=right:$y$]{};
    \vertex (5b) at (10,-4) [fill, scale=.75, label=left:$a$]{};
    \vertex (6b) at (13,-4)  [fill, scale=.75, label=right:$c$]{};
    \vertex (7b) at (11.5, -3)  [fill, scale=.75, label=above:$b$]{};

    \node(A) at (1, -1.75) []{$W_1$};
    \node(B) at (5, -1.75)[]{$W_2$};
    \node(C) at (9, -1.75)[]{$W_3$};
     \node(D) at (13, -1.75)[]{$J_1$};
      \node(E) at (4, -9)[]{$J_2$};
      \node(F) at (11.5, -9)[]{$J_3$};
      \node(G) at (4.75,.25)[]{$y$};

    \path 
	(1) edge (2)
	(2) edge (3)
	(3) edge (4)
	(4) edge (5)
	(5) edge (6)
	(6) edge (7)
	(1) edge (7)
	(2) edge (4)
	(3) edge (5)
	
	(B1) edge (B2)
	(B2) edge (B3)
	(B3) edge (B4)
	(B4) edge (B5)
	(B5) edge (B6)
	(B6) edge (B7)
	(B1) edge (B7)
	(B2) edge (B4)
	(B3) edge (B5)
	(B3) edge (B7)
	
  	(C1) edge (C2)
	(C2) edge (C3)
	(C3) edge (C4)
	(C4) edge (C5)
	(C5) edge (C6)
	(C6) edge (C7)
	(C1) edge (C7)
	(C2) edge (C4)
	(C3) edge (C5)  
	(C4) edge (C6)
	
	(D1) edge (D2)
	(D2) edge (D3)
	(D3) edge (D4)
	(D4) edge (D5)
	(D2) edge (D4)
	(D3) edge (D5)  
	(D2) edge (D5)
	(D1) edge (D7)
	(D7) edge (D6)
	(D5) edge (D6)
	
	       (1a) edge (2a)
    (2a) edge (3a)
    (3a) edge (4a)
    (4a) edge (1a)
    (4a) edge (5a)
    (5a) edge (6a)
    (5a) edge (7a)
    (6a) edge (7a)
    (6a) edge (1a)
    (7a) edge (3a)
    (1a) edge (3a)
    (2a) edge (4a)
    
        (1b) edge (2b)
    (1b) edge (3b)
    (4b) edge (2b)
    (4b) edge (3b)
    (5b) edge (2b)
    (6b) edge (4b)
    (7b) edge (6b)
    (7b) edge (5b)
    (5b) edge (6b)
    (2b) edge (3b)
    (1b) edge (4b)

    ;

\end{tikzpicture}
\caption{The graphs $W_1$, $W_2$, $W_3$, $J_1$, $J_2$, and $J_3$}
\label{fig:Js}
\end{center}
\end{figure}

In order to show that we have found all connected well-edge-dominated graphs containing a triangle, we first prove a lemma about well-edge-dominated graphs with small diameter and clique number $4$.  

\begin{lemma}\label{lem:smalldiamond} Let $G$ be a graph containing an induced $K_4$ with vertices $\{w, x, y, z\}$. If at least two vertices in $\{w, x, y, z\}$ have degree at least $4$ in $G$ and every vertex in $G$ is either a leaf or adjacent to a vertex in $\{w, x, y, z\}$, then $G$ is not well-edge-dominated. 
\end{lemma}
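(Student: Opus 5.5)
The plan is to violate well-edge-domination directly. I will build two minimal edge dominating sets of $G$ of different cardinalities, or equivalently show that $\Gamma'(G) > \gamma'(G)$. Since every well-edge-dominated graph is equimatchable, it is enough to produce either two maximal matchings of different sizes, or a maximal matching together with a strictly smaller minimal edge dominating set.

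Let $K=\{w,x,y,z\}$ and $S=V(G)-K$. By hypothesis every non-leaf vertex of $S$ has a neighbor in $K$. Hence every edge of $G$ either meets $K$, or joins two non-leaves of $S$ that both have neighbors in $K$, or is a pendent edge $st$ with $t$ a leaf and $s\in S$ adjacent to $K$. Without loss of generality, $w$ and $x$ have degree at least $4$. Then $w$ has a neighbor $a\in S$ and $x$ has a neighbor $b\in S$.

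I first treat the main case where $a\ne b$ can be chosen.
\begin{itemize}
\item Let $N$ be a maximal matching of $G-(K\cup\{a,b\})$.
\item Let $M=N\cup\{wa,xb,yz\}$. This is a matching. Any edge of $G$ it misses must join two vertices of $S-\{a,b\}$ left unsaturated by $N$, which is impossible by maximality of $N$. So $M$ is a maximal matching with $|M|=|N|+3$.
\item Build the competing set $F=\{wy,xz\}\cup N\cup P$. Here $P$ is a minimal set of edges needed to dominate the edges at $a$ and $b$ that go into vertices of $S$ not saturated by $N$. The two edges $wy,xz$ dominate every edge meeting $K$. The set $N$ dominates every edge of $G[S-\{a,b\}]$.
\end{itemize}

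The goal is to arrange $|P|\le 1$, or better to absorb $P$ by rechoosing $N$, so that after pruning to a minimal edge dominating set we get $|F|\le |N|+2<|M|$. This contradicts $\gamma'(G)=\Gamma'(G)$ (already $\gamma'(G)<\alpha'(G)$ suffices). To control $P$, I will pick $N$ cleverly. Before taking a maximal matching, first match $a$ and $b$ inside $S$ whenever possible, so that edges at $a,b$ are already dominated by $N$. Then, in the complementary situation, swap roles: put those $S$-edges at $a$ or $b$ into the matching side $M$ instead, which increases $|M|$ rather than $|F|$.

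The subcase where $w$ and $x$ have only one outside neighbor, the same vertex $a=b$, needs separate handling. There the comparison is $\{wa,xy\}$ plus extensions versus a smaller set built around $\{wx,yz\}$. If the two degree-$\ge4$ vertices share their unique outside neighbor, I expect to use the leaf structure of $S$ to find the extra matching edge.

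The main obstacle is verifying minimality of $F$ and the exact count of $P$. Each edge of $F$ needs a private edge neighbor. Here $wy$ has the private neighbor $wa$ only if $a$ is not also dominated through $N$. Similarly $xz$ has $xb$ as a private neighbor only if $b$ is not dominated through $N$. This is why the choice of $N$ (avoiding $a,b$) matters and why the hypothesis on two vertices of degree at least $4$ is needed. If minimality fails, I will instead argue only that $\gamma'(G)\le |N|+2$ by pruning $F$, which is enough against the maximal matching $M$ of size $|N|+3$.
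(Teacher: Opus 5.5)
There is a genuine gap, and it lies in the strategy itself, not just in the unfinished bookkeeping. Both certificates you allow yourself (two maximal matchings of different sizes, or a maximal matching plus a smaller edge dominating set) amount to proving $\gamma'(G)<\alpha'(G)$. The hypotheses of the lemma do not force this.

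\emph{The case $a=b$.} In $K_5$, take $a$ to be the fifth vertex. Every maximal matching has size $2$ and $\gamma'(K_5)=2$, and there are no leaves and no extra matching edge to find.

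\emph{The main case $a\ne b$.} In $K_6$, take $S=\{a,b\}$. Every maximal matching is perfect, and $\gamma'(K_6)=3$, since any two edges of $K_6$ fail to dominate some edge. Your construction gives $N=\emptyset$, $M=\{wa,xb,yz\}$ and $P=\{ab\}$, hence $|F|=|M|$. No rechoice of $a,b,N$ can produce an edge dominating set of size $2$.

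Both graphs satisfy the hypotheses and are not well-edge-dominated. The only witnesses are minimal edge dominating sets that are \emph{not} matchings and are larger than $\alpha'(G)$:
\begin{itemize}
\item in $K_5$, the set $\{wy,wz,wa\}$, with private neighbors $xy,xz,xa$;
\item in $K_6$, the set $\{wx,wy,wz,wa\}$, with private neighbors $xb,yb,zb,ab$.
\end{itemize}
So a correct proof must be able to build such sets, typically stars or near-stars at a vertex of the $K_4$, and verify a private edge neighbor for each of their edges. Your framework never does this.

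The paper's proof works this way. It takes a minimum counterexample and partitions $V(G)-K$ into the outside support vertices $S(G)$, the leaves $L(G)$ not adjacent to $K$, and the rest $T(G)$. It compares competing sets against the maximal matching $M_S\cup M_T\cup\{xw,yz\}$. After disposing of $S(G)\neq\emptyset$ directly, it uses minimality and the fact that $G-N_e[M]$ is again well-edge-dominated to reduce to $|M_T|=1$. There, explicit non-matching minimal edge dominating sets larger than a maximal matching, such as $\{zv,zw,zx,zt\}$ or $\{rz,uv,vx,vy\}$, give the contradiction.

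Separately, even where your comparison is the right one, the count does not close as written. You have $|F|=|N|+2+|P|$ against $|M|=|N|+3$, so $|P|\le 1$ only yields $|F|\le |M|$. Saturating $a$ or $b$ inside $N$ to kill $P$ is incompatible with $wa,xb\in M$. The ``swap roles'' step is never specified.
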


\begin{proof} Suppose to the contrary that such a $G$ is well-edge-dominated and among such counterexamples, choose one of minimum order. We partition the vertices of $G$ as $V(G) = \{w, x, y, z\} \cup S(G) \cup L(G) \cup T(G)$ where the vertices in $S(G)$ are support vertices in $V(G) - \{w, x, y, z\}$, the vertices in $L(G)$ are leaves in $G$ which are not adjacent to some vertex in $\{w, x, y, z\}$, and $T(G)= V(G) - (S(G)\cup L(G) \cup \{w, x, y, z\})$. Let $M_S$ be a matching containing one pendent edge per vertex in $S(G)$ and let $M_T$ be a maximal matching in $G[T(G)]$. Thus, $M_S \cup M_T \cup \{xw, yz\}$ is a maximal matching in $G$ with cardinality $|M_S \cup M_T| +2$.

Assume that $S(G)\ne \emptyset$. First, suppose that there are distinct vertices $u, v \in S(G)$ such that $u$ is adjacent to some $a \in \{w, x, y, z\}$ and $v$ is adjacent to some $b \in \{w, x, y, z\} - \{a\}$. Then $\{ua, vb\} \cup M_T \cup \{cd\} \cup (M_S - \{e_1, e_2\})$ where $\{c, d\} = \{w, x, y, z\} - \{a, b\}$ and $e_1 \in M_S$ (resp. $e_2 \in M_S$) is the edge in $M_S$ incident to $u$ (resp. incident to $v$) is also a maximal matching in $G$ of cardinality $|M_S \cup M_T|+1$, which is a contradiction. Therefore, we may assume that every vertex in $S(G)$ is adjacent to exactly one vertex in $\{w, x, y, z\}$, say $x$. This implies that $T(G) \ne \emptyset$ for otherwise $\deg_G(z) =\deg_G(w) = \deg_G(y)= 3$. Also note that $G - N_e[M_S \cup \{xw, yz\}]= G[T(G)]$ is well-edge-dominated. 

Let $F = \{xs:s \in S(G)\}$ and let $H$ be the connected, nontrivial component in $G-N_e[F\cup M_T]$ which is isomorphic to the triangle $wyz$ together with all leaves in $T(G)$ as well as all vertices in $T(G)$ not incident to an edge in $M_T$. Suppose we can find three distinct vertices $v_1, v_2, v_3 \in V(H)$ where $v_1$ is adjacent to $w$, $v_2$ is adjacent to $y$ and $v_3$ is adjacent to $z$. Then $M_S \cup M_T \cup \{v_1w, v_2y, v_3z\}$ is a minimal edge dominating set of $G$ where with cardinality $|M_S \cup M_T| +3$, which is a contradiction. Therefore, we may assume $\deg_H(a) = 2$ for some $a \in \{w, y, z\}$. Without loss of generality, assume $\deg_H(w) = 2$. If $\deg_H(y) > 2$, $\deg_H(z)>2$, then we can find $v_1, v_2 \in V(H)$ where $F \cup M_T \cup \{yv_1, zv_2\}$ and $F \cup M_T \cup \{yz\}$ are two minimal edge dominating sets. So we shall assume $\deg_H(w) = \deg_H(y) = 2$. However, in this case, both $F \cup M_T \cup \{yz\}$ and $M_S \cup M_T \cup \{xw, yz\}$ are minimal edge dominating sets, another contradiction. Therefore, it must be that $S(G) = \emptyset$ and $M_T \cup \{xw, yz\}$ is a maximal matching in $G$. 

Enumerate $M_T = \{u_1v_1, u_2v_2, \dots, u_nv_n\}$ such that (if possible) $|(N_G(u_1)\cup N_G(v_1))\cap \{w, x, y, z\}| \ge2$. Assume first that $n\ge2$. If it is true that $|(N_G(u_1)\cup N_G(v_1))\cap \{w, x, y, z\}| \ge2$, then when we consider $H= G - N_e[u_2v_2]$, we know that $H$ is well-edge-dominated and the component $H'$ of $H$ containing $\{w, x, y, z\}$ has order $n(G) - 2$, every vertex in $V(H') - \{w, x, y, z\}$ is adjacent to at least one of $\{w, x, y, z\}$ and at least two vertices in $\{w, x, y, z\}$ have degree at least $4$. However, this contradicts the minimality of $G$, so this case cannot be. Therefore, $|(N_G(u_i)\cup N_G(v_i))\cap \{w, x, y, z\}| =1$ for each $i \in [n]$. With no loss of generality, assume $(N_G(u_1)\cup N_G(v_1))\cap \{w, x, y, z\} =\{z\}$. If there exists $2 \le j < k \le n$ such that $(N_G(u_j)\cup N_G(v_j))\cap \{w, x, y, z\} = \{a\}$ and $(N_G(u_k)\cup N_G(v_k))\cap \{w, x, y, z\} = \{b\}$ where $a\ne b$ and $\{a, b\} \cap \{z\} = \emptyset$, then again $G - N_e[u_1v_1]$ contains a component which is a smaller counterexample than $G$. Therefore, we may assume that for each $i\in [n]$ that $(N_G(u_i)\cup N_G(v_i))\cap \{w, x, y, z\}= \{z\}$ or $(N_G(u_i)\cup N_G(v_i))\cap \{w, x, y, z\}= \{x\}$. Moreover, it must be that $n=2$ for otherwise we reach a similar contradiction. Thus, we shall assume $(N_G(u_1)\cup N_G(v_1))\cap \{w, x, y, z\}= \{z\}$ and $(N_G(u_2)\cup N_G(v_2))\cap \{w, x, y, z\}= \{x\}$. Without loss of generality, we may assume $u_1$ is adjacent to $z$ and $u_2$ is adjacent to $x$. Note that $G- N_e[wx, yz]$ is well-edge-dominated with edge domination number two. Thus, every vertex in $T(G) - \{u_1, v_1, u_2, v_2\}$ is only adjacent to some subset of vertices in $\{w, x, y, z, u_1, u_2, v_1, v_2\}$. Moreover, as $G - N_e[u_2v_2]$ is well-edge-dominated and contains the $K_4$ induced by $\{w, x, y, z\}$, it follows that there exists exactly one vertex $a \in \{w, x, y, z\}$ such that every vertex in $T(G) - \{u_2, v_2\}$ is adjacent to only $a$ in $\{w, x, y, z\}$ for otherwise we have a smaller counterexample than $G$. Since $u_1$ is adjacent to $z$, then $a=z$. On the other hand, the same is true for $G - N_e[u_1v_1]$, which would imply that $a=x$ as $u_2$ is adjacent to $x$. This contradiction shows that we may assume $T(G) = \{u_1, v_1, u_2, v_2\}$. In this case, $G[T(G)]$ is either $2K_2$, $K_4$, or $C_4$. If $G[T(G)] \cong C_4$, then assume it is $u_1v_1u_2v_2u_1$. In either case, $\{xu_2, wy, zu_1\}$ and $\{wx, yz, u_1v_1, u_2v_2\}$ are  maximal matchings contradicting that $G$ is equimatchable. Therefore, this case cannot occur and it must be that $|M_T|=1$.

Finally, assume that $M_T = \{uv\}$. Thus, there exists a maximal matching in $G$ of size $3$. First note that $G - N_e[wx, yz]$ is well-edge-dominated with edge domination one, meaning that it is the union of isolates and either a star $K_{1, \ell}$ where we may assume that $u$ is the center of the star, or $K_3$. Note also that we may assume that $n(G) \ge 8$ so that either $T(G)$ contains leaves or $\ell >1$. As above, since $G - N_e[uv]$ is well-edge-dominated, all vertices in $T(G) - \{u,v\}$ are adjacent to the same vertex in $\{w, x, y, z\}$ for otherwise the component containing $\{w, x, y, z\}$ in $G - N_e[uv]$ is a smaller counterexample than $G$. With no loss of generality, let us assume that for each  $t \in T(G) - \{u,v\}$ that $N_G(t) \cap \{w, x, y, z\} = \{z\}$. By assumption, it means that either $u$ or $v$ is adjacent to a vertex in $\{w, x, y\}$. Assume first that $ux \in E(G)$. Enumerate the vertices of $T(G) -\{u, v\} = \{b_1, \dots, b_{\ell-1}\}$. If the nontrivial component in $G[T(G)]$ is a star and $\ell \ge 3$, then $\{xw, xy, b_1u, b_2u, \dots, b_{\ell-1}u\}$ is a minimal edge dominating set of cardinality strictly larger than $3$ if $z$ is not adjacent to $u$ or $v$ and $\{xw, xy, uv, b_1u, b_2u, \dots, b_{\ell-1}u\}$ is a minimal edge dominating set of cardinality strictly larger than $3$ if $z$ is adjacent to $u$ or $v$.  Therefore, we may assume that the nontrivial component in $G[T(G)]$ is either $P_2, K_3$, or $K_{1, 2}$. As $n(G) \ge 8$, this implies that there is some $r \in T(G)$ which is a leaf of $z$. If $u$ has a neighbor in $T(G)$ other than $v$, then we call it $t$. If $t$ exists and $v$ is adjacent to a vertex in $\{x, y, w\}$, then $G - N_e[tu]$ is a counterexample to the statement of the theorem, which is a contradiction. On the other hand, if $t$ exists and $N(v) \cap \{w, x, y, z\} = \{z\}$, then one of  $\{zv, zw, zx, zt\}$, $\{zv, zy, zx, zt\}$, or $\{zv, zw, zx, zt, zy\}$  is a minimal edge dominating set (depending on $N(u)\cap \{w, x, y, z\}$). However, this implies $G$ is not well-edge-dominated as $\gamma'(G)=3$. Thus, we may assume that $t$ does not exist. Note that $G-N_e[wz]$ contains a component with vertices $\{x, y, u, v\}$, meaning that it must induce a $4$-cycle or $K_4$. Therefore, we may assume that $vy\in E(G)$. Note that $\{xz, vy\}$ is a maximal matching in $G$ unless $uw\in E(G)$. Therefore, we may assume $uw\in E(G)$. In this case, $\{uw, yz\}$ is a maximal matching in $G$ unless $vx \in E(G)$. Therefore, we may assume $uw, vx \in E(G)$. However, now $\{rz, uv, vx, vy\}$ is a minimal edge dominating set in $G$, which is a contradiction. In every case, we reach a contradiction. Therefore, we may assume that $ux \not\in E(G)$. Moreover, a similar argument can be used to show $uy\not\in E(G)$ and $uw\not\in E(G)$. Thus, $N_G(u) \cap \{w, x, y, z\} = \{z\}$. It follows that we may assume $vx \in E(G)$. If $u$ contains a neighbor in $T(G)$ other than $v$, call it $t$ again, then $G- N_e[tu]$ contains a component that is a smaller counterexample. So we may assume that $uv$ is the only edge in $G[T(G)]$. But this is identical to the case that we just considered above with merely the roles of $u$ and $v$ interchanged. Hence, no such counterexample exists.

\end{proof}

We are now ready to show that we have a complete characterization of all nonbipartite well-edge-dominated graphs. Figures \ref{ND7} and \ref{ND8} show all connected well-edge dominated graphs containing a triangle of order $7$ and $8$. There are two additional graphs of order $7$ that are well-edge dominated, which are $C_7$ and $C_7^*$, and there are no additional graphs of order $8$ that are well-edge dominated. We can also make the following observations from the figures.

\begin{obs} If $G$ is well-edge-dominated with order at most $8$, then $G$ does not contain an induced $C_5$ unless  $G \in \{C_5, J_1, W_3, Cr, C_7^*\}$. 
\label{case1}
\end{obs}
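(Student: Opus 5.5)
The observation is a finite statement about graphs on at most $8$ vertices. The natural approach is to reduce it to the classification already summarized in Figures~\ref{ND7} and~\ref{ND8}, together with the known small cases. I will split by order and by whether $G$ contains a triangle.

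\emph{Graphs with no triangle.} If $G$ is triangle-free, then $g(G)\ge 4$. By Theorem~\ref{thm:girth5red}, either $G$ is bipartite or $G\in\{C_5,C_7,C_7^*\}$. A bipartite graph has no odd cycle, so it has no induced $C_5$. The graph $C_7$ has no induced $C_5$, since its only cycle has length $7$. The graph $C_7^*$ does contain one: the chord between two vertices at distance $3$ closes a $5$-cycle. That cycle is induced, because any further chord would create a triangle or a $4$-cycle, and $C_7^*$ has neither. The graph $C_5$ trivially contains an induced $C_5$. So in the triangle-free case the only graphs with an induced $C_5$ are $C_5$ and $C_7^*$, both on the exceptional list.

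\emph{Graphs with a triangle.} An induced $C_5$ needs at least $5$ vertices, so I only consider orders $5$ through $8$.
\begin{itemize}
\item For orders $7$ and $8$, I use the complete lists in Figures~\ref{ND7} and~\ref{ND8}. For each listed graph I check whether some $5$ vertices induce a $5$-cycle. I expect this to hold exactly for $J_1$, $W_3$ and $Cr$.
\item In $Cr$, the cycle $adcbya$ is not induced, because of the chord $ab$. The cycle $a\,x\,y\,b\,c\,d\,a$ is too long, and $x\,a\,b\,y\,x$ has length $4$. An explicit induced $5$-cycle has to be exhibited.
\item In $W_3$, the vertices $a,w,x,c,b$ should work. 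Its edges among these are $aw$, $wx$, $xc$, $cb$ and $ba$, with no chords, so $awxcba$ is an induced $C_5$.
\item For orders $5$ and $6$ with a triangle, the figures do not cover these orders. Here I would use Theorems~\ref{thm:K4free} and~\ref{thm:onetriangle} together with the description of $\mathcal{G}^*$ to list the small members: $K_3$ with pendants, $\mathcal{H}$, propellers, $K^*$, $K_4$-Operation graphs on $5$ or $6$ vertices, and so on.
\item For each small member, I check that it has no induced $C_5$. On $5$ vertices, an induced $C_5$ would make $G=C_5$, which is triangle-free, so no such graph arises. On $6$ vertices, an induced $C_5$ plus one vertex adjacent to at least two consecutive cycle vertices gives a graph like $W$-type wheels. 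Equimatchability fails for these (a maximal matching of size $2$ versus a perfect matching of size $3$), unless the resulting graph appears on no list.
\end{itemize}

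\emph{Main obstacle.} The real work is the exhaustive check of the figure lists, in particular making sure that $Cr$, $J_1$ and $W_3$ are the only order-$7$ triangle-containing examples and that none of the order-$8$ graphs has an induced $C_5$. For the order-$8$ graphs, I would argue structurally rather than by eye. Every such graph lies in $\mathcal{G}^*$ and is built from a bipartite graph by attaching propellers, windmills, diamonds or $K_4$'s at single cut vertices. Any induced cycle must therefore lie within a single block. The blocks are bipartite pieces, triangles, $K_4$'s, diamonds, or house graphs, and none of these has an induced $C_5$, since the house's $5$-cycle has the chord closing its triangle. This block argument handles all graphs of $\mathcal{G}^*$ simultaneously, leaving only the listed exceptions to be checked by hand.
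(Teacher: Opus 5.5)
Your proof is correct for connected $G$. That is how both the observation and Theorem~\ref{thm:girth5red} must be read: $C_5\cup K_2$ is well-edge-dominated of order $7$ and has an induced $C_5$. For orders $7$ and $8$ with a triangle, your main step is the paper's own argument. The paper simply reads the observation off the SAGE-generated lists in Figures~\ref{ND7} and~\ref{ND8}, adding that $C_7$ and $C_7^*$ are the only other graphs of order $7$ and there are none of order $8$.

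Your additions are genuinely different and worth keeping.
\begin{itemize}
\item Theorem~\ref{thm:girth5red} replaces that remark in the triangle-free case.
\item For order $6$ you need no list of small members of $\mathcal{G^*}$. If the sixth vertex $v$ is adjacent to cycle vertex $i$, then $\{vi,(i+2)(i+3)\}$ is a maximal matching and $\{vi,(i+1)(i+2),(i+3)(i+4)\}$ is a perfect one, whatever the other neighbours of $v$ are. So the triangle, the ``consecutive'' condition, and the clause ``unless the resulting graph appears on no list'' can all be dropped.
\item The block argument is the best part. Every block of a member of $\mathcal{G^*}$ is bipartite, a triangle, a diamond, a house, or a $K_4$; the $K_4$-Operation turns the support vertex of a pendent edge into a cut vertex. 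Hence no graph in $\mathcal{G^*}$, of any order, has an induced $C_5$. The observation then reduces to the classification for order at most $8$ plus a hand check of the exceptional graphs.
\end{itemize}
This explains the result rather than relying on inspecting drawings, but two points must be made explicit. First, the classification for order at most $8$ has to be quoted from the SAGE base case, not from the main theorem in general. The inductive part of that proof invokes this very observation in Case~1, so citing the theorem would be circular. Second, the hand check of $K_4$, $F_5$, $W_1$, $W_2$, $\mathcal{DH}$, $J_2$, $J_3$ still has to be done. It is quick: for instance, $V(J_2)$ and $V(J_3)$ are covered by a $K_4$ and a $K_3$, so any five vertices contain a triangle.

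Your verifications that the listed graphs do contain an induced $C_5$ have errors. They concern only the converse, which the observation does not assert, so nothing breaks.
\begin{itemize}
\item $C_7^*$ does contain a $4$-cycle. With $C_7=v_0v_1\cdots v_6v_0$ and chord $v_0v_3$, both $v_0v_1v_2v_3v_0$ and $v_0v_3v_4v_5v_6v_0$ appear. The right reason the $5$-cycle is induced is that any chord of a $5$-cycle creates a triangle, and $C_7^*$ has none.
\item In $Cr$, $adcbya$ is not a cycle, since $ya\notin E(Cr)$. By contrast, $adczxa$ is an induced $5$-cycle.
\end{itemize}
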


\begin{obs} If $G$ is well-edge-dominated with order at most $8$, then $G$ does not contain an induced $\mathcal{H}$ unless $G \in \{W_2, W_7, W_8, W_{10}, W_{12}, W_{15}, W_{16},V_7\}$. 
\label{case2}
\end{obs}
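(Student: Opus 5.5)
The statement is read off from the complete list of connected well-edge-dominated graphs of order at most $8$ in Figures \ref{ND7} and \ref{ND8}, together with the triangle-free ones. The plan is a finite verification, with structural facts used to prune it.

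\emph{Reduction to a finite list.} First reduce to the connected case. A graph is well-edge-dominated exactly when each of its components is, because minimal edge dominating sets of a disjoint union are unions of minimal edge dominating sets of the components. An induced $\mathcal{H}$ is connected, so it lies in a single component, and that component is a connected well-edge-dominated graph of order at most $8$. Since $\mathcal{H}$ contains a triangle, that component is one of the graphs in Figures \ref{ND7} and \ref{ND8}, or one of the smaller graphs of order at most $6$ with a triangle. The triangle-free exceptions $C_5, C_7, C_7^*$ of Theorem~\ref{thm:girth5red} cannot occur, and neither can bipartite graphs.

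\emph{Pruning by order.} The order of $G$ must be at least $5$. When $n(G)=5$, $G=\mathcal{H}$ itself, which lies in $\mathcal{W}$ as a windmill and is presumably accounted for in the figure labelling. For order $6$, Theorems~\ref{thm:onetriangle} and \ref{thm:K4free} describe the candidates explicitly. Graphs in $\mathcal{P}$, diamond attachments, and $K^*$ contain no induced $C_4$, so they contain no induced $\mathcal{H}$. Hence only windmills and graphs in $\mathcal{G}$ built from a bipartite base that contains a $C_4$ together with a windmill can qualify. Graphs containing $K_4$ are handled by the $K_4$-Operation structure: adding $w,x$ at a pendent edge creates no new induced $C_4$ with a pendant triangle on it.

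\emph{Checking the remaining graphs.} For each remaining graph in the figures, test whether some five vertices induce $\mathcal{H}$. Concretely, look for an induced $4$-cycle $abcd$ together with a vertex $z$ adjacent to exactly two consecutive vertices of that cycle. The graphs that pass are exactly $W_2, W_7, W_8, W_{10}, W_{12}, W_{15}, W_{16}, V_7$.

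\emph{Main obstacle.} The bulk of the work, and the main risk, is making sure that no graph in Figure~\ref{ND8} is missed. Many graphs there contain a $4$-cycle with an adjacent triangle, but in many of them a chord spoils inducedness. Each such configuration has to be checked individually against its adjacency list.
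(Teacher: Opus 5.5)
Your plan of reading the answer off the computer-generated lists in Figures~\ref{ND7} and~\ref{ND8} is how the paper obtains this observation. The gap is in how you extend it to cases those lists do not cover: the steps you use there are false, and they fail exactly where the statement, read literally, is false. The figures contain only \emph{connected} graphs of order $7$ and $8$.

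For $n(G)=5$ you say $\mathcal{H}$ is ``presumably accounted for in the figure labelling''. It is not. $\mathcal{H}$ is well-edge-dominated (every minimal edge dominating set has size $2$), it contains itself as an induced house, and it is none of $W_2,W_7,W_8,W_{10},W_{12},W_{15},W_{16},V_7$, all of which have order $7$ or $8$. Similarly, your reduction to a component proves something about the component, not about $G$: $\mathcal{H}\cup K_2$ has order $7$, is well-edge-dominated, contains an induced $\mathcal{H}$, and is not on the list.

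What can actually be proved is this: a connected well-edge-dominated graph of order $7$ or $8$ with an induced $\mathcal{H}$ is one of the eight listed graphs. That is also all the paper uses. In Case~2 of the main theorem, the relevant component contains the house together with $s$ and $t$, so it has order $7$ or $8$. You should state the claim that way and flag $\mathcal{H}$ and disconnected graphs as exceptions to the literal wording, rather than argue them away.

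Your order-$6$ pruning also uses the wrong mechanism. Whether a member of $\mathcal{G}$ has an induced house has nothing to do with an induced $C_4$ in the bipartite base: $W_7$ is a house hung on the base $K_{1,2}$. The relevant fact concerns triangles. The roof triangle $zab$ of an induced house needs two vertices $a,b$ with distinct further neighbours that are not adjacent to the rest of the triangle. This fails for a propeller triangle, which has two vertices of degree $2$. It also fails for a diamond triangle, because the only outside neighbour of an interior vertex is adjacent to the other interior vertex. Hence an induced house in a graph of $\mathcal{G}$ forces a windmill, giving order $5+2k$ or at least $3+4=7$.

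For graphs with a $K_4$, invoking the $K_4$-Operation presupposes the main classification theorem. That theorem's proof uses this very observation, and for $n\le 8$ it rests on the same enumeration anyway. A direct check is easy. Write the induced cycle as $abcd$ with $z$ adjacent to $a,b$. In order $6$ the $K_4$ must be $\{z,a,b,v\}$ with $v$ the sixth vertex. Then $\{vb,ad\}$ is a maximal matching while $\{vz,ab,cd\}$ is a perfect matching, so $G$ is not even equimatchable.
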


\begin{theorem} $G$ is a nonbipartite well-edge-dominated graph if and only if $G \in \mathcal{G^*}$ or $G \in \{ K_4, C_5, C_7, C_7^*,  \mathcal{DH}, F_5, Cr,  W_1, W_2, W_3, J_1, J_2, J_3\}$.
\end{theorem}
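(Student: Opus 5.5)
The ``if'' direction is already essentially in hand. The preceding lemma shows every graph in $\mathcal{G^*}$ is well-edge-dominated. $C_5, C_7, C_7^*$ are covered by Theorem~\ref{thm:girth5red}, and $\mathcal{DH}, F_5, Cr, W_1, W_2, W_3$ by Theorem~\ref{thm:K4free}. That leaves $K_4, J_1, J_2, J_3$, which are small graphs (orders $4$ and $7$) and can be checked directly, as recorded in Figure~\ref{ND7}. Each graph in the list is nonbipartite since it contains an odd cycle; for $\mathcal{G^*}$ this holds because either the base graph lies in $\mathcal{G}$ or a $K_4$-Operation has been applied, creating a triangle.

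For the ``only if'' direction, let $G$ be connected, nonbipartite and well-edge-dominated. If $g(G)\ge 4$, Theorem~\ref{thm:girth5red} gives $G\in\{C_5,C_7,C_7^*\}$. If $G$ has girth $3$ and is $K_4$-free, Theorem~\ref{thm:K4free} places $G$ in $\mathcal{G}$ or among the six listed exceptions. So assume $G$ contains an induced $K_4$ on $Q=\{w,x,y,z\}$; note $\omega(G)\le 4$ unless $G=K_5$, which is not equimatchable-compatible with well-edge-domination, and more generally any $K_5$ is easily excluded. I would argue by induction on $n(G)$, with Figures~\ref{ND7} and~\ref{ND8} serving as the base for $n(G)\le 8$; these produce exactly $K_4, J_1, J_2, J_3$ and members of $\mathcal{G^*}$.

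The key structural claim is that at most one vertex of $Q$, say $w$, has neighbors outside $Q$, and that some other vertex of $Q$, say $y$, has degree $3$ with $x,z$ also of degree $3$. To get this, I would use Lemma~\ref{lem:reduce}. For any matching $M$ far from $Q$, the component of $G-N_e[M]$ containing $Q$ is a smaller well-edge-dominated graph. Choose $M$ to be a maximal matching of the part of $G$ at distance at least $2$ from $Q$ (or greedily, matching away non-leaf vertices not adjacent to $Q$). Then the component containing $Q$ satisfies the hypothesis ``every vertex is a leaf or adjacent to $Q$''. Lemma~\ref{lem:smalldiamond} then forces at most one vertex of $Q$ to have degree $\ge 4$ in that reduced graph. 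I would then lift this back to $G$: if two vertices of $Q$ had outside neighbors in $G$, I would pick $M$ avoiding the edges witnessing them, which requires a careful choice so those neighbors survive in the reduction. I expect this lifting step to be the main obstacle. Induced $C_5$'s and houses through $Q$ will need separate handling via Observations~\ref{case1} and~\ref{case2} applied to small reductions.

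Once $x,y,z$ have degree $3$, the graph $G'=G-\{x,z\}$ has $y$ as a leaf pendent at $w$, and $G$ arises from $G'$ by a $K_4$-Operation on $wy$. It remains to verify three things. First, $G'$ is well-edge-dominated: every minimal edge dominating set $F'$ of $G'$ extends to a minimal edge dominating set of $G$ by adding $xz$, or after swapping $wy$ for $\{xz\}$-type edges, so $|F'|=\gamma'(G)-1$ throughout. Second, $w$ is not on a triangle in $G'$; otherwise one could build a too-large minimal edge dominating set using the triangle edges and $xy, zy$. Third, the forbidden configuration (a minimal edge dominating set of $G'$ containing $wt$ with private neighbor $ts$) is excluded: if it occurred, adding $wx$ and $wy$ would yield a minimal edge dominating set of $G$ of size $\gamma'(G)+1$, reversing the argument in the preceding lemma. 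Finally, $G'$ is connected of order at least $3$. By induction (or by Theorem~\ref{thm:K4free} and the bipartite case) it lies in $\mathcal{G^*}\cup\mathcal{B}$ or among the exceptions. The last step is to check by hand that applying the $K_4$-Operation to an exceptional $G'$ either yields a non-well-edge-dominated graph or one of $J_1,J_2,J_3$. Since the exceptions other than $K_2$-type bases are small, this is a finite check. When $G'=K_2$, we get $K_4$ itself.
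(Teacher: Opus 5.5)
The ``if'' direction and your endgame are sound and match the paper. Your $G'=G-\{x,z\}$ is the paper's $\widetilde{G}=G-N_e[yz]$ up to relabeling, its well-edge-domination is just Lemma~\ref{lem:reduce}, and your check (b) is the paper's argument.

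\textbf{Main gap: the degree-$3$ claim.} The step you flag as the main obstacle is showing that three vertices of the induced $K_4$ $Q$ have degree $3$ in $G$. The mechanism you propose cannot prove this. Let $M$ be a maximal matching of the part of $G$ at distance at least $2$ from $Q$. The unsaturated vertices at distance $2$ are independent, but each may have several neighbors at distance $1$. They then survive in the component of $G-N_e[M]$ containing $Q$ as non-leaves not adjacent to $Q$, so Lemma~\ref{lem:smalldiamond} does not apply. When all neighbors of such a vertex $b$ lie at distance $1$, the only way to remove $b$ is to put an edge $ab$ into $M$ with $a$ at distance $1$. That deletes $a$, which may be the only outside neighbor of a vertex of $Q$.

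$J_1$ shows this is unavoidable. It is well-edge-dominated, and the vertices $w,z$ of its $K_4$ have outside neighbors $a,c$ joined through $b$ in exactly this way; $J_2$ and $J_3$ are similar. So the degree-$3$ property does not follow from Lemma~\ref{lem:smalldiamond} and fails for these graphs. It has to be obtained by explicitly excluding $J_1$-, $J_2$-, $J_3$-type configurations around $Q$ inside larger graphs. That is where essentially all the work lies, and your remark about handling $C_5$'s and houses via Observations~\ref{case1} and~\ref{case2} only names it.

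The paper does this with a minimum counterexample:
\begin{itemize}
\item It deletes $N_e[st]$ for an edge $st$ farthest from a contracted $K_4$. Then $G-N_e[st]$ is a connected $G_0$ plus at most one isolated vertex, and $t$ has no neighbor in $Q$.
\item It applies minimality to $G_0$.
\item It rules out $G_0\in\{J_1,J_2,J_3\}$ by identifying the component of $G-N_e[xy]$ or $G-N_e[xz]$ through those observations, and exhibiting two minimal edge dominating sets of different sizes in each case.
\item When $G_0\in\mathcal{G^*}$, it shows that $s$ avoids the three degree-$3$ vertices of $Q$, via Lemma~\ref{lem:smalldiamond} and a reduction to a star. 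Only then does the degree-$3$ property pass from $G_0$ to $G$.
\end{itemize}

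\textbf{Smaller gap: claim (a).} ``One could build a too-large minimal edge dominating set using the triangle edges and $xy,zy$'' is not an argument. The natural construction extends $\{wx,wy,wp\}$ so that $wp$ keeps the private neighbor $pq$. It breaks down whenever every minimal edge dominating set of $G-Q-p$ uses an edge at $q$. This happens, for example, when $wpq$ sits in a diamond $\{w,p,q,r\}$, since then $r$ is a leaf of $q$ in $G-Q-p$. Which set works depends on the structure around $w$, so reorder the steps and apply induction to $G'$ first. In $\mathcal{G^*}\cup\mathcal{B}$, a support vertex lying on a triangle is either a diamond nose or the high-degree vertex of a $K_4$ that still has a leaf. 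In both cases $\{wp,wq\}$ extends to a minimal edge dominating set of $G'$ in which $wp$ has the private neighbor $pr$ with $r\ne w$, which your (b) forbids. The paper's proof of (a) likewise relies on the structure of $\mathcal{G^*}$.
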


\begin{proof} Suppose to the contrary that the statement of the theorem is false and let $G$ be a minimum counterexample with respect to order. Using SAGE, we know if $n(G) \leq 8$, then $G$ satisfies the statement of the theorem. In addition, by Theorems \ref{thm:girth5red} and \ref{thm:K4free}, we know if $G$ is $K_4$-free, then $G$ satisfies the statement of the theorem. Thus, we may assume that $n(G) \ge 9$ and that $G$ contains an induced $K_4$.  Let $D$ be the set of vertices $t\in V(G)$ such that if we choose an induced $K_4$ with vertices $\{w, x, y, z\}$ and  we contract $w, x, y$, and $z$ into a single vertex $T$, then $\dist(T, t)$ is maximum among all choices of induced $K_4$s  and vertices $t$. Then for each $t \in D$, and among all neighbors of $t$ in $G$, choose $s$ so that $\dist(T,s)$ is maximum. We choose $t$ and $s$ such that $t \in D$ and $s \in N_G(t)$ such that $\dist(T, s)$  is maximum among all $t\in D$. Consider $G' = G - N_e[st]$. We may write $G' = G_0 \cup I$ where $I$ is an independent set in $G$ and $G_0$ is connected for otherwise $st$ was chosen incorrectly. We can partition $I = I_s \cup I_{st} \cup I_t$ where each vertex in $I_s$ is adjacent to $s$ and not $t$, each vertex in $I_t$ is adjacent to $t$ and not $s$, and each vertex in $I_{st}$ is adjacent to both $s$ and $t$. We also claim we can make the following assumptions.
\begin{enumerate}
\item We may assume $I_t \cup I_{st} = \emptyset$. Suppose to the contrary that $\ell \in I_t \cup I_{st}$. If $\dist(T, s) < \dist(T, t)$, then $\dist(T, \ell) \ge \dist(T, t)$ which would imply that we should have chosen $s = \ell$. On the other hand, if $\dist(T, s) = \dist(T, t)$, then $\dist(T, \ell) > \dist(T, t)$ and we incorrectly chose $t$. In either case, we reach a contradiction. So we may assume that $I_t \cup I_{st} = \emptyset$.
\item  We may assume that $|I_s| \le 1$. If $|I_s|\ge 2$, then for some $\ell \in I_s$, $G - \ell$ is also well-edge-dominated, contradicting our choice of minimal counterexample. \item We may assume that $t$ is not adjacent to any vertex in $\{w, x, y, z\}$. This assumption follows from  Lemma~\ref{lem:smalldiamond}.
\item We may assume if $s$ is adjacent to some vertex in $\{w, x, y, z\}$, then $t$ is not a leaf. Suppose for the sake of contraction that $t$ is a leaf. By Lemma~\ref{lem:smalldiamond}, if every vertex in $G$ has a maximum distance of $2$ from some vertex in $\{w, x, y, z\}$, then there exists some edge $st$ where $t$ is not adjacent to any vertex in $\{w, x, y, z\}$ and $t$ is not a leaf.
\end{enumerate}
By our choice of $G$ and $G_0$, $G_0$ is well-edge dominated with order at least $6$, contains an induced $K_4$, and does satisfy the statement of the theorem. Thus, we proceed by considering whether $G_0 \in \{J_1, J_2, J_3\}$ or $G_0 \in \mathcal{G^*}$. 
\vskip2mm
\noindent\textbf{Case 1:} Suppose that $G_0 \cong J_1$.
\vskip2mm
Label the vertices as in Figure~\ref{fig:Js}. Note that $G - N_e[xy]$ contains the induced $C_5$ with vertices $\{w, z, a, b, c\}$. Thus, the component $\widetilde{G}$ of $G-N_e[xy]$ containing $\{w, z, a, b, c\}$ must be isomorphic to $C_5$,  $J_1$, $W_3$, $Cr$, or  $C_7^*$ by Observation \ref{case1}. However, if $\widetilde{G}\cong C_5$, then neither $s$ nor $t$ is adjacent to any vertex in $\{w, z, a, b, c\}$, meaning that $s$ is adjacent to either $x$ or $y$ and $t$ is a leaf in $G$, which is a contradiction. Next, we shall assume that $\widetilde{G} \cong J_1$. As $t$ is not adjacent to $w$ or $z$, we may assume  with no loss of generality, that $\{a, b, s, t\}$ induces a $K_4$ in $G$. In this case, $\{xz, sb, aw\}$ is a maximal matching in $G$ as well as $\{xz, wy, ab, st\}$, which cannot be. Therefore, this case cannot occur.

Next, assume $\widetilde{G}\cong W_3$. Based on how $s$ and $t$ are defined, we may assume without loss of generality that $wabczw$ represents the $5$-cycle, and either (i) $s$ is adjacent to each vertex in $\{w, a\}$ and $t$ is adjacent to each vertex in $\{a,b\}$, or (ii) $s$ is adjacent to each vertex in $\{a,b\}$ and $t$ is adjacent to each vertex in $\{b,c\}$. If  (i) is true, then $\{sw, yz, bt\}$ and $\{sw, yz, bc, at\}$ are both minimal edge dominating sets of $G$ (regardless of whether $s$ is adjacent to $x$ or $y$), which is a contradiction. If (ii) is true, then $\{bt, wx, as, cz\}$ and $\{st, wz, wx, ct, aw\}$ are both minimal edge dominating sets of $G$ (regardless of whether $s$ is adjacent to $x$ or $y$), which is a contradiction.



Next, assume $\widetilde{G}\cong Cr$. Then $\widetilde{G}$ has two induced subgraphs isomorphic to $C_5$.  It is clear from $G_0$ that the induced $5$-cycle $wabczw$ in $\widetilde{G}$  has exactly two vertices on the triangle and the remaining three vertices not on the triangle. This produces two cases: (i) if $s$ is the third vertex of the triangle, then $wzs$ is the triangle in $Cr$ (as $t$ is not adjacent to $w$ or $z$) and $t$ is adjacent to $a$ and $c$, or (ii) if $t$ is the third vertex of the triangle, then without loss of generality, we may assume $tab$ is the triangle in $Cr$ and $s$ is adjacent to $w$ and $c$. In case (i), $\{xz, at, ab, ws\}$ and $\{ab, at, sz, xz, yz\}$ are both minimal edge dominating sets of $G$ (regardless of whether $s$ is adjacent to $x$ or $y$), which is a contradiction. In case (ii), $\{st, wx, ab, cz\}$ and $\{sw, zy, tb\}$ are both minimal edge dominating sets of $G$ (regardless of whether $s$ is adjacent to $x$ or $y$), which is a contradiction.

Finally, assume $\widetilde{G}\cong C_7^*$ where $wabczw$ is the induced $5$-cycle in $C_7^*$. Based on the definition of $s$ and $t$, we may assume without loss of generality that either (i) $s$ is adjacent to $w$ and $t$ is adjacent to $a$,  or (ii) $s$ is adjacent to $a$ and $t$ is adjacent to $b$. If $s$ is adjacent to $w$ and $t$ is adjacent to $a$, then $\{st, cz, xy, aw\}$ and $\{st, at, cz, yz, xz\}$ are both minimal edge dominating sets of $G$ (regardless of whether $s$ is adjacent to $x$ or $y$), which is a contradiction. If $s$ is adjacent to $a$ and $t$ is adjacent to $b$, then $\{st, ab, wx, yz\}$ and $\{st, as, cz, wz, yz\}$ are both minimal edge dominating sets of $G$ (regardless of whether $s$ is adjacent to $x$ or $y$), which is a contradiction.

\vskip2mm
\noindent\textbf{Case 2:} Suppose that $G_0 \cong J_2 $ or $G_0 \cong J_3$.
\vskip2mm 
Label the vertices as in Figure~\ref{fig:Js} and note that the only difference between $J_2$ and $J_3$ is the edge $zb$ in $J_2$. Consider $H=G-N_e[xz]$ which contains the induced subgraph isomorphic to the house graph with vertices  $\{w, y, a, b, c\}$ by Observation \ref{case2}. Suppose first that $H$ is connected. From Figures 3 and 4, it must be that $H \in \{W_2, W_7, W_8, W_{10}, W_{12}, W_{15}, W_{16},V_7\}$. Recall that $t$ cannot be adjacent to $w, y, x,$ or $z$; however, $s$ may be. We may assume that $H \not\in\{W_{10}, W_{12}, W_{16}\}$ as this would imply that $t$ is adjacent to either $w$ or $y$ since $abc$ is a triangle in $H$. 
\begin{itemize}
\item If $H \cong W_2$, then we shall assume that $t$ is adjacent to $a$ and $b$, and $s$ is adjacent to $w$. Then both $\{st, yz, yx, bc, ta\}$ and $\{st, yz, aw, ab\}$ are minimal edge dominating sets of $G$ (regardless of whether $s$ is adjacent to $x$ or $z$), which is a contradiction.

\item If $H \cong W_7$, then we shall assume that $sb \in E(G)$ and $t$ is the leaf in $H$. This implies that $t$ is a leaf in $G$ as $t$ is not adjacent to $x$ or $z$. However, both $\{wa, yz, sb\}$ and $\{xz, wy, ac, st\}$ are maximal matchings, which is a contradiction.

\item If $H \cong W_8$, then we shall assume that both $s$ and $t$ are adjacent to $b$. Since $t$ is not adjacent to $x$ or $z$,  both $\{wz, cy, st, ab\}$ and $\{wz, cy,sb\}$ are maximal matchings of $G$ (regardless of whether $s$ is adjacent to $x$ or $z$), which is a contradiction.

\item Assume $H \cong W_{15}$. Note that if the triangle $abc$ in $H$ is not contained in the $K_4$ in $W_{15}$, then $t$ would necessarily be adjacent to $w$ in $H$, which is a contradiction to our choice of $s$ and $t$. Therefore, we may assume that  $\{a, b, c, t\}$ induces the $K_4$ in $H$ and $s$ is adjacent to each of $\{y, w\}$. Then $\{zy, st, ab, sw\}$ and $\{ab, xz, wx, ct, st\}$ are minimal edge dominating sets of $G$ (regardless of whether $s$ is adjacent to $x$ or $z$), which is a contradiction. 

\item If $H \cong V_7$, then we shall assume that  $s$ is adjacent to $b$ and both $t$ and $\ell$ are leaves in $H$. It follows that $t$ and $\ell$ are leaves in $G$ as $t$ is not adjacent to $x$ or $z$. Then $\{xw, cy, st, ab\}$ and $\{xw, sb, cy\}$ are both maximal matchings of $G$ (regardless of whether $s$ is adjacent to $x$ or $z$), which is a contradiction.

\end{itemize}
Having exhausted all cases, we may assume that $H$ is disconnected, meaning that $s$ is adjacent to a vertex in $\{x, z\}$. Suppose first that $G_0 \cong J_2$. It follows that $G$ satisfies the statement of Lemma~\ref{lem:smalldiamond} as $\{w, x, y, z\}$ induces a $K_4$ with at least two vertices of degree $4$ in $G$, every vertex other than $t$ and $\ell$ (should it exist) has a neighbor in $\{w, x, y, z\}$, and both $t$ and $\ell$ (should it exist) are leaves, meaning $G$ is not well-edge-dominated. Therefore, this case cannot occur either and $G_0 \not\cong J_2$. On the other hand, if $G_0 \cong J_3$, we may assume that $xs \in E(G)$ as $J_3$ is symmetric and $t$ is a leaf in $G$. Then $\{ac, wy, xz, st\}$ is a maximal matching in $G$ and yet $\{ac, wy, xs\}$ is a minimal edge dominating set in $G$, which is a contradiction. Thus, this case cannot occur and $G_0 \not\cong J_3$. 
\vskip2mm

\vskip2mm
\noindent\textbf{Case 3:} Finally, we may assume that $G_0 \in \mathcal{G^*}$.
\vskip2mm 

We shall assume that $G_0$ is obtained from a graph in $ \mathcal{G^*}$ by the $K_4$-Operation involving $\{w, x, y, z\}$. We shall also assume that $\deg_{G_0}(x) = \deg_{G_0}(y) = \deg_{G_0}(z) = 3$ and $\deg_{G_0}(w) \ge 4$, and $G_0$ was built from constructing a $K_4$ from the edge $wz$. Recall that $t$ is not adjacent to any vertex in $\{x, y, z, w\}$. We first claim that we may assume that $s$ is not adjacent to any vertex in $\{x, y, z\}$. Suppose to the contrary that $s$ is adjacent to $a \in \{x, y, z\}$. In this case, $t$ is not a leaf in $G$. In addition, every vertex in $V(G) - \{w, x, y, z\}$ is either adjacent to a vertex in $\{w, x, y, z\}$ or is distance two from a vertex in $\{w, x, y, z\}$ by our choice of $s$ and $t$. Assume $G_0$ contains another triangle induced by $\{p, q, r\}$ in $V(G_0) - \{w, x, y, z\}$. By construction of  $\mathcal{G^*}$, we can assume neither $q$ nor $r$ is adjacent to a vertex in $\{w, x, y, z\}$. However, this implies we should have chosen the edge $qr$, contradicting our choice of $st$, which is a contradiction. Therefore, we may assume that $G_0$ contains no such triangle $pqr$.   This implies that $G_0$ is built from a bipartite graph $J$ containing the pendent edge $wz$ and adding the vertices $x$ and $y$ and edges $xw, xy, xz, yw,$ and $yz$. Thus, we can write $J = (A_J\cup B_J, E)$ where $|A_J|<|B_J|$ and $z \in B_J$ and $w \in A_J$. Moreover, every vertex in $G_0$ is assumed to be within distance two of $w$, meaning that every vertex in $B_J$ is dominated by $w$. If there exists some $a \in A_J - \{w\}$, then  we can choose a matching $M$ of size $|A_J|-1$ from $G_0$ that saturates $A_J - \{w\}$ implying that $G_0 - N_e[M]$ is a graph satisfying the statement of Lemma \ref{lem:smalldiamond}, which is a contradiction. It follows that $J$ is a star. Moreover, $G_0$ has order at least 7 meaning that $G_0$ contains at least 3 leaves. Since $t$ is not a leaf in $G$ and $t$ is not adjacent to $\{x, y, z, w\}$, $t$ is adjacent to some $r \in B_J- \{z\}$, implying $G - N_e[rt]$ satisfies the statement of Lemma~\ref{lem:smalldiamond}, another contradiction. Hence, we may assume that $s$ is not adjacent to any vertex in $\{x, y, z\}$. That is, we may assume that $\deg_G(x) = \deg_G(y) = \deg_G(z) =3$.

 Consider $\widetilde{G} = G - N_e[yz]$, which is necessarily connected and contains the leaf $x$. Since $\widetilde{G}$ is a connected, well-edge dominated graph with order at least  $6$, $\widetilde{G} \in \{C_7, C_7^*,  \mathcal{DH}, Cr,  W_1, W_2, W_3, J_1, J_2, J_3\}$ or $\widetilde{G} \in \mathcal{G^*}$. Since $x$ is a leaf, $\widetilde{G} \in \mathcal{G^*}$. Then $G$ is obtained from $\widetilde{G}$ by performing the $K_4$-Operation provided that (a) $w$ is not on a triangle in $\widetilde{G}$, and (b) there is no minimal edge dominating set $F$ of $\widetilde{G}$ containing an edge of the form $wb$ where $wb$ has a private edge neighbor of the form $ab$ where $a \ne w$. Note that $G_0 \in \mathcal{G^*}$ by the assumption in this case. If $w$ is on a triangle in $\widetilde{G}$, but $w$ is not on a triangle in $G_0$, then $s$ (and $w$) is on said triangle. Thus, $w$ and $s$ must have degree at least $3$ in $\widetilde{G}$. Since $\widetilde{G} \in \mathcal{G^*}$, the triangle containing $s$ and $w$ must be on a diamond or a $K_4$ based on the construction of $\mathcal{G^*}$. Furthermore, by our choice of $st$ and the fact that $s$ is adjacent to $w$, it must be that every vertex in $\widetilde{G}$ has a maximum distance of two from $w$ in both $\widetilde{G}$ and $G$. Therefore, we may assume $\widetilde{G}$ is built from a well-edge-dominated bipartite graph $J = (A_J\cup B_J, E)$ where $|A_J|<|B_J|$ and $x$ is a leaf in $J$ so $w$ is a support vertex in $J$. However, since $J = (A_J\cup B_J, E_J)$ is well-edge-dominated with $|A_J| < |B_J|$, by Lemma 1 no vertex in $A_J$ is a leaf in $J$ meaning that every vertex in $B_J$ is adjacent to $w$. Suppose first that there is some $a \in A_J - \{w\}$ and let $ab \in E_J$. Then $G- N_e[ab]$ is well-edge-dominated and yet it contains the $K_4$ induced by $\{w, x, y, z\}$ as well as a diamond or $K_4$ induced by the vertices $\{s, t, w, r\}$ for some $r \in V(G)$, which does not match the description of any graph in $\mathcal{G^*}$. Therefore, we may assume that $J$ is a star. If  $\widetilde{G}$ contains a diamond, then  $\{sr, wx, yz\}$ and $\{ws, wr, wx, yz\}$ are minimal edge dominating sets of $G$, which is a contradiction. On the other hand, if $\widetilde{G}$ does not contain a diamond, then $\{ws, xy\}$ and $\{st, wx, yz\}$ are both minimal edge dominating set of $G$, another contradiction. Therefore, (a) is true. 
 
To see that (b) is true, suppose there is some minimal edge dominating set $F$ of $\widetilde{G}$ containing the edge $wb$ where $wb$ has the private edge neighbor $ab$ for some $a \in V(G) - \{w, x, y, z, b\}$. In this case, both $F \cup \{xy\}$ and $F \cup \{wx, wy\}$ are minimal edge dominating sets of $G$, which contradicts the assumption that $G$ is well-edge-dominated. Therefore, (b) is true as well and we may conclude that $G \in \mathcal{G^*}$, the last contradiction. Thus, no such counterexample exists. 
 
\end{proof}
\section{Line graphs of nonbipartite well-edge-dominated graphs}
In this section, we focus on the line graph for each $G\in \mathcal{G^*}$. There is a natural relationship between well-edge-dominated graphs and well-dominated graphs stated in the following observation.

\begin{obs}\label{obs:wed} For any well-edge-dominated graph $G$, $\mathcal{L}(G)$ is well-dominated.
\end{obs}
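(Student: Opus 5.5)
The plan is to pass directly through the natural bijection between $E(G)$ and $V(\mathcal{L}(G))$, $e \mapsto v_e$. By the definition of the line graph, $v_e$ and $v_f$ are adjacent in $\mathcal{L}(G)$ exactly when $e$ and $f$ are adjacent edges in $G$. Consequently the closed neighborhood $N_{\mathcal{L}(G)}[v_e]$ is precisely $\{v_f : f \in N_e[e]\}$. The whole argument reduces to this dictionary between edge neighborhoods in $G$ and vertex neighborhoods in $\mathcal{L}(G)$.

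The first step is to show that for any $F \subseteq E(G)$, $F$ is an edge dominating set of $G$ if and only if $D_F = \{v_f : f \in F\}$ is a dominating set of $\mathcal{L}(G)$. Indeed, each vertex $v_e$ is dominated by $D_F$ exactly when $e \in F$ or $e$ is adjacent to some edge of $F$. Since $F \mapsto D_F$ is a bijection preserving both inclusion and cardinality, minimality carries over in both directions. A proper subset of $D_F$ that dominates is $D_{F'}$ for some proper subset $F' \subsetneq F$ that edge dominates, and conversely. Hence the minimal dominating sets of $\mathcal{L}(G)$ are exactly the images of the minimal edge dominating sets of $G$, and they have the same sizes.

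The conclusion then follows at once. If $G$ is well-edge-dominated, every minimal edge dominating set has cardinality $\gamma'(G)$. So every minimal dominating set of $\mathcal{L}(G)$ has this same cardinality, which must therefore equal $\gamma(\mathcal{L}(G))$, and $\mathcal{L}(G)$ is well-dominated.

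There is no real obstacle here. The only point needing care is stating that minimality, not just domination, is preserved, and that follows because the correspondence is an inclusion-preserving bijection on subsets. Edge cases such as a graph with no edges, giving an empty line graph, are trivial: the empty set is the unique minimal dominating set.
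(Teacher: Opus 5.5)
Your proof is correct and is exactly the reasoning the paper leaves implicit. The paper states this as an observation without proof, relying on the fact that $e \mapsto v(e)$ carries edge dominating sets of $G$ onto dominating sets of $\mathcal{L}(G)$ by an inclusion- and cardinality-preserving bijection, so minimal edge dominating sets of $G$ correspond exactly to minimal dominating sets of $\mathcal{L}(G)$.
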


Recall that Vizing's conjecture states that for any pair of graphs $G$ and $H$, $\gamma(G\Box H) \ge \gamma(G)\gamma(H)$. One of the first results on this conjecture was proven by Barcalkin and German \cite{BG-1979} whereby they showed that if $G$ was such that $\gamma(G) = \theta(G)$ (i.e. if $G$ has a clique partition containing $\gamma(G)$ cliques) then $G$ satisfies Vizing's conjecture. We refer to any graph having the property that $\gamma(G) = \theta(G)$ as being a ``BG graph". From Observation~\ref{obs:wed}, we know that if $G$ is well-edge-dominated then $\gamma(\mathcal{L}(G)) = \alpha(\mathcal{L}(G))$. Thus, if $G$ is bipartite and well-edge-dominated, then Observation~\ref{obs:bipartite} yields $\mathcal{L}(G)$ is perfect and hence $\alpha(\mathcal{L}(G)) = \theta(\mathcal{L}(G))$, meaning $\gamma(\mathcal{L}(G)) = \theta(\mathcal{L}(G))$. Hence, we have the following observation. 

\begin{obs}\label{lem:bipartiteBG} If $G$ is a bipartite and well-edge-dominated graph, then $\mathcal{L}(G)$ is a BG graph.
\end{obs}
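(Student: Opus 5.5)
The plan is to chain the three facts assembled just before the statement: the equalities $\gamma'(G)=\alpha'(G)$ for well-edge-dominated graphs, perfection of line graphs of bipartite graphs (Observation~\ref{obs:bipartite}), and the Perfect Graph Theorem. Throughout I identify edge sets of $G$ with vertex sets of $\mathcal{L}(G)$.

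\textbf{Step 1: translate edge parameters to vertex parameters.} An edge dominating set of $G$ is exactly a dominating set of $\mathcal{L}(G)$, and a matching of $G$ is exactly an independent set of $\mathcal{L}(G)$. Hence
\[
\gamma(\mathcal{L}(G)) = \gamma'(G), \qquad \alpha(\mathcal{L}(G)) = \alpha'(G).
\]

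\textbf{Step 2: use well-edge-domination.} Since $G$ is well-edge-dominated, every maximal matching of $G$ (being a minimal edge dominating set) has cardinality $\gamma'(G)$. Therefore $\gamma'(G) = \alpha'(G)$, and by Step 1,
\[
\gamma(\mathcal{L}(G)) = \alpha(\mathcal{L}(G)).
\]
This is also visible from Observation~\ref{obs:wed}, since a well-dominated graph is well-covered.

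\textbf{Step 3: use perfection.} Since $G$ is bipartite, $\mathcal{L}(G)$ is perfect by Observation~\ref{obs:bipartite}. By Lov\'asz's theorem, $\overline{\mathcal{L}(G)}$ is also perfect, so $\omega(\overline{\mathcal{L}(G)}) = \chi(\overline{\mathcal{L}(G)})$. Using $\alpha(\mathcal{L}(G)) = \omega(\overline{\mathcal{L}(G)})$ and $\chi(\overline{\mathcal{L}(G)}) = \theta(\mathcal{L}(G))$, this gives
\[
\alpha(\mathcal{L}(G)) = \theta(\mathcal{L}(G)).
\]

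\textbf{Conclusion.} Combining Steps 2 and 3 yields $\gamma(\mathcal{L}(G)) = \theta(\mathcal{L}(G))$, so $\mathcal{L}(G)$ is a BG graph.

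No step is a real obstacle. The only point needing care is Step 2. There we need the equality $\gamma = \alpha$ in $\mathcal{L}(G)$, not merely the general inequality $\gamma \le \alpha$. The equality comes from the fact that any maximum matching is maximal, hence minimal edge dominating, hence has size $\gamma'(G)$.
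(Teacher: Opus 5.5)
Your proposal is correct and follows the paper's argument essentially verbatim. The paper obtains $\gamma(\mathcal{L}(G)) = \alpha(\mathcal{L}(G))$ from well-edge-domination (via Observation~\ref{obs:wed}), and $\alpha(\mathcal{L}(G)) = \theta(\mathcal{L}(G))$ from perfection of $\mathcal{L}(G)$ (Observation~\ref{obs:bipartite}) together with the Perfect Graph Theorem; your direct argument that $\gamma'(G)=\alpha'(G)$, because a maximum matching is a minimal edge dominating set, simply spells out the first step.
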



Before we move on, we point out that there are  several classes of graphs which are relevant to Vizing's conjecture. If $v \in V(G)$ satisfies that $\gamma(G-v) = \gamma(G)-1$, then we refer to $v$ as being a \emph{critical vertex with respect to domination}. We say $G$ is \emph{vertex critical} (with respect to domination) if every vertex is critical. Similarly, if for any $uv \not\in E(G)$ it is true that $\gamma(G+\{uv\}) = \gamma(G)-1$, then we refer to $G$ as being \emph{edge critical with respect to domination}. Bre\v{s}ar et al. \cite{VCsurvey} point out that if there exists a minimal (with respect to order) counterexample to Vizing's conjecture (meaning that  $\gamma(G\Box H)<\gamma(G)\gamma(H)$ for some graph $H$) that we may assume that $G$ is edge critical. Furthermore, if $\gamma(G+\{uv\})<\gamma(G)$ for every $uv\not\in E(G)$, then either $u$ or $v$ is critical in $G$. Thus, vertex criticality and edge criticality are not unrelated, yet behave very differently. For more about these two topics, see \cite{ACMM-1996, BH-2011, RJ-2011, SB-1981}.

Now if we take a well-edge-dominated graph \[G\in \mathcal{G^*}\cup\{K_4, C_5, C_7, C_7^*, Cr, F_5, \mathcal{DH}, J_1, J_2, J_3, W_1, W_2, W_3\},\] one can verify that there exist edges that cannot be removed from $G$ to maintain a well-edge-dominated graph. For instance, take any edge on a propeller that is not incident to the nose of the propeller. This implies that we can find $e \in E(G)$ such that $\gamma'(G-e) <\gamma'(G)$. And since we know its line graph $\mathcal{L}(G)$ is well-dominated, this implies that the corresponding vertex $v_e \in V(\mathcal{L}(G))$ satisfies $\gamma(G-v_e) < \gamma(G)$. That is, $\mathcal{L}(G)$ will contain critical vertices. In fact, there are some graphs $G\in \mathcal{G^*}\cup\{K_4, C_5, C_7, C_7^*, Cr, F_5, \mathcal{DH}, J_1, J_2, J_3, W_1, W_2, W_3\}$ such that every vertex in $\mathcal{L}(G)$ is critical. For instance, one can verify that for any edge $e \in E(W_1)$ that $\gamma'(G-e) < \gamma'(G)$, meaning that $\mathcal{L}(W_1)$ is a vertex critical graph with respect to domination. 

We now focus on all graphs in $\mathcal{G^*}$ and show that the line graph of each graph in this class satisfies Vizing's conjecture. In what follows, we will be going back and forth between a graph and its line graph. If we have a graph $G$ and an edge $e \in E(G)$, we will use $v(e)$ to refer to the vertex in $\mathcal{L}(G)$ corresponding to $e$. 

\begin{theorem} If $G \in \mathcal{G^*}$, then $\mathcal{L}(G)$ is a spanning subgraph of a BG graph $\widetilde{G}$ where $\gamma(\mathcal{L}(G)) = \gamma(\widetilde{G})$.
\end{theorem}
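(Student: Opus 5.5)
The plan is to build $\widetilde{G}$ from $\mathcal{L}(G)$ by adding edges so that it has a clique partition with exactly $\gamma(\mathcal{L}(G))$ cliques. We must also make sure that adding edges does not lower the domination number. Since adding edges can only decrease $\gamma$, the second requirement is automatic once we have a clique partition: $\gamma(\widetilde{G}) \le \theta(\widetilde{G}) = \gamma(\mathcal{L}(G))$ gives one inequality. The other inequality we get for free from $\gamma(\widetilde{G}) \le \gamma(\mathcal{L}(G))$ together with a lower bound. Concretely, we will show $\gamma(\widetilde{G}) \ge \gamma(\mathcal{L}(G))$ by exhibiting $\gamma(\mathcal{L}(G))$ vertices of $\widetilde{G}$ with pairwise disjoint closed neighborhoods in $\widetilde{G}$. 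This is a packing, and it yields $\gamma(\widetilde{G}) \ge \theta(\widetilde{G})$ trivially, since $\gamma \ge$ packing number. So the whole task reduces to constructing a clique cover of $E(G)$ into $\gamma'(G)=\alpha'(G)$ groups, each of which becomes a clique after adding edges, together with a matching-type packing certifying the lower bound. A simpler route: since every clique partition class contains a distinct packing vertex, $\theta(\widetilde G)\ge \gamma(\widetilde G)\ge \gamma(\mathcal L(G))$ needs care. So the main thing I would actually prove is $\theta(\widetilde{G}) = \gamma(\mathcal{L}(G))$ with $\widetilde{G} \supseteq \mathcal{L}(G)$. Then $\gamma(\widetilde{G}) \le \gamma(\mathcal{L}(G))$. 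For the reverse inequality, I choose the added edges only \emph{inside} the parts, so that $\widetilde{G}$ is obtained by completing each part to a clique.

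For the construction, I would proceed by induction along the definition of $\mathcal{G^*}$. For the base case $H\in\mathcal{B}$, Observation~\ref{lem:bipartiteBG} gives that $\mathcal{L}(H)$ is itself BG, so we take $\widetilde{G}=\mathcal{L}(H)$. For $H \in \mathcal{G}$ built from a bipartite $G'$, start with the clique partition of $\mathcal{L}(G')$ of size $\gamma'(G')$ given by perfection, and then handle the attached gadgets:
\begin{itemize}
\item For each diamond $D_i$, add one part consisting of the five diamond edges. Its edges all meet $u_iv_i$ except the two outer edges at the nose $y_i$, and we complete it to a clique.
\item For each propeller triangle $g_j^ih_j^iz_i$, take its three edges as a part; these already form a clique in $\mathcal{L}$.
\item For a windmill, use triangles for the blades and two parts for the house: the triangle $\{z_ia_i, z_ib_i, a_ib_i\}$ and the star at $c_i,d_i$, completing as needed.
\end{itemize}
The counts match equation~(\ref{eqn:1}), namely $\ell + 2k + \sum(n(W_i)-5)/2 + \sum (n(P_i)-1)/2$. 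For the edges of $G'$ removed at detachable vertices (those now incident to noses), we put each edge $x_ia$ into the existing clique of $\mathcal{L}(G')$ containing it, which is unchanged. For the $K_4$-Operation on a pendent edge $uv$, the six $K_4$ edges plus $uv$ replace the single edge $uv$. We put $uv$'s old part together with the star at $u$, namely $uw$ and $ux$ (already a clique with the star of $u$ if that part was the star at $u$). We then add one new part consisting of $\{wx, wv, xv\}$, which forms a triangle clique, so the count rises by exactly one, matching $\gamma'(G)=\gamma'(H)+1$.

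The lower bound $\gamma(\widetilde{G}) \ge \gamma(\mathcal{L}(G))$ is the main obstacle, since adding edges could create a smaller dominating set. My approach would be to show $\gamma(\widetilde{G})\ge \theta(\widetilde G)$ by finding $\theta(\widetilde{G})$ edges of $G$ such that no part of the partition contains edges adjacent (in $\widetilde{G}$) to two of them, that is, a 2-packing in $\widetilde{G}$. For the bipartite core this would use König duality: a maximum matching saturating $A$ and the star partition at $A$. Gadget by gadget we would pick, for example, the pendant-like edge $u_iv_i$ in a diamond, a blade edge $g_jh_j$ in a propeller, and $xw$ (or $wv$) in a new $K_4$. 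We then verify that completing parts only joins edges within a part. The risk is that the star at the nose or at $u$ merges neighborhoods. Where that happens, I would instead argue directly: any dominating set of $\widetilde{G}$ must contain a vertex of each gadget's private part, or a vertex of the nose's part counted once. Inductively removing these parts then gives $\gamma(\widetilde{G}) \ge \gamma'(G)$.
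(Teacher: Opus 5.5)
\textbf{There is a genuine gap.} Your overall plan matches the paper: you induct along the construction, take the $|A|$-part clique partition of $\mathcal{L}(G')$ from perfection, use triangles for the propeller blades, and handle the $K_4$-Operation by appending $v(uw),v(ux)$ to the part containing $v(uv)$ and adding the triangle $\{v(wx),v(wv),v(xv)\}$. That $K_4$ step is fine, and the part containing $v(uv)$ automatically lies in the star at $u$ because $v$ is a leaf. The gap is that both places where you add edges lower the domination number, so your $\widetilde{G}$ has $\gamma(\widetilde{G})<\gamma(\mathcal{L}(G))$.

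\emph{Diamond.} Making all five diamond edges one clique forces you to add $v(yu')v(uw)$ and $v(yu)v(u'w)$ (with interior vertices $u,u'$ and exterior vertices $y,w$). Afterwards $v(yu)$ is adjacent to every diamond edge and to the whole star at $y$. Concretely, attach a diamond with nose $y$ to the center $y$ of $G'=K_{1,2}$. This $G$ is in $\mathcal{G}$ with $\gamma'(G)=2$, yet $\{v(yu)\}$ dominates your $\widetilde{G}$.

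\emph{Windmill.} Your part $\{bc,cd,da\}$ needs the edge $v(bc)v(da)$. Then $v(bc)$ dominates all four edges of the $4$-cycle $abcd$, and the remaining house edges $za,zb$ are dominated by any edge at the nose. Concretely, take $G'=P_5=p_1q_1sq_2p_2$ and attach a house (triangle $zab$, $4$-cycle $abcd$) at the strongly detachable vertex $s=z$. Then $\gamma'(G)=|A|+2=4$, but $\{v(q_1s),v(sq_2),v(bc)\}$ dominates your $\widetilde{G}$.

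\textbf{The packing lower bound cannot repair this.} Closed neighborhoods only grow when edges are added, so a $2$-packing in $\widetilde{G}$ is a $2$-packing in $\mathcal{L}(G)$, that is, an \emph{induced} matching of $G$. K\"onig duality gives you a matching, not an induced matching, so it does not supply the packing. Already $C_4$ with one pendant edge, which lies in $\mathcal{B}$, has the house as its line graph, with $\gamma=2$ but packing number $1$.

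\textbf{How the paper avoids this.} Since $\gamma\le\theta$ always, once $\widetilde{G}$ has a partition into $\gamma'(G)$ cliques the only thing left to prove is $\gamma(\widetilde{G})\ge\gamma'(G)$. That inequality is automatic when no edges are added, so the paper adds as few as possible.

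\emph{Diamond.} Because $b_i$ is a leaf at $y_i$, the part of $\mathcal{L}(G')$ containing $v(y_ib_i)$ lies in the star at $y_i$. So the paper appends $v(u_iy_i),v(v_iy_i)$ to that part and uses the triangle $\{v(u_iv_i),v(u_iw_i),v(v_iw_i)\}$, which is already a clique. No edges are added.

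\emph{Windmill.} The paper uses $\{a_ib_i,a_iz_i,a_id_i\}$, which is already a clique, and $\{b_iz_i,b_ic_i,c_id_i\}$, adding only $v(b_iz_i)v(c_id_i)$. With this choice no single vertex of $\widetilde{G}$ dominates all of $v(a_ib_i),v(b_ic_i),v(c_id_i),v(d_ia_i)$: the vertex $v(b_iz_i)$ still misses $v(a_id_i)$, and $v(c_id_i)$ still misses $v(a_ib_i)$. The house therefore still costs two vertices, and the paper then argues directly, using strong detachability and $G'-s_i$, that $\gamma$ does not drop.
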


\begin{proof} We proceed by considering whether or not $G$ contains a $K_4$. 
\vskip5mm
\noindent\textbf{Claim 1:} If $G$ is $K_4$-free, then $\mathcal{L}(G)$ is a spanning subgraph of a BG graph.
\vskip5mm

\noindent\textit{Proof.}  Since $G \in \mathcal{G^*}$ and $G$ is $K_4$-free, we may assume that $G$ was constructed from the connected, nontrivial, bipartite and well-edge-dominated graph $G'= (A\cup B, E')$ where $|A| < |B|$. Then a set $B' \subset B$ and $Y \subseteq A$ was chosen so that  \begin{itemize}
\item $B'\cup Y \ne \emptyset$.
\item  $G'- B'$ is a well-edge-dominated graph with $\gamma'(G'-B') = \gamma'(G')$, $G'-B'$ has no trivial components, and $|A| \le |B-B'|$. 
\item If $Y\ne \emptyset$, then we write $Y = \{y_1, \dots, y_{\ell}\}$ where $y_i$ is a support vertex in $G'- B'$. 
\end{itemize}
If $Y\ne \emptyset$, we enumerate $Y = \{y_1, \dots, y_{\ell}\}$ and let $D_1, \dots, D_{\ell}$ be the disjoint set of diamonds such that the nose of $D_i$ was identified with $y_i$. If $B'' \ne \emptyset$, we enumerate $B'' = \{s_1, \dots, s_k\}$ and let $W_1, \dots, W_k$ be the disjoint set of windmills such that the nose of $W_i$ was identified with $s_i$. If $B' - B'' \ne \emptyset$, we enumerate $B' - B'' = \{x_1, \dots, x_r\}$ and let $P_1, \dots, P_r$ be the set of disjoint propellers such that the nose of $P_i$ was identified with $x_i$. We can find a maximal matching in $G$ as follows. First, let $M$ be a maximal matching in $G'$ that saturates $A$ and contains all pendent edges in $G'$. If $Y\ne \emptyset$, let the interior vertices of $D_i$ be $u_i$ and $v_i$ so that $u_iv_i$ edge dominates all edges of $D_i$, and let the other exterior vertex of $D_i$ be $w_i$. If $B''\ne \emptyset$, we enumerate the vertices of the house graph of $W_i$ as $z_i, a_i, b_i, c_i$, and $d_i$ where $a_ib_ic_id_ia_i$ is a $4$-cycle, $z_ia_ib_i$ is a triangle, $z_i$ is the nose of the windmill, and all remaining triangles of $W_i$ are of the form $r_j^it_j^iz_i$ for $j \in [m_i]$. Finally, if $B'-B'' \ne \emptyset$, then we label the nose of $P_i$ as $z_i$ and all triangles of $P_i$ are of the form $g_j^ih_j^iz_i$ for $j \in [n_i]$. As mentioned in Equation~\ref{eqn:1}, we know that $$\gamma'(G) = |A| +\overline{ \ell} + \overline{k} + \overline{r},$$
where $\overline{\ell} = \ell$ if $Y\ne \emptyset$ and $\overline{\ell} = 0$ otherwise; $\overline{k} = 2k+ \sum_{i=1}^k(n(W_i)-5)/2$ if $B''\ne\emptyset$ and $\overline{k} =0$ otherwise; $\overline{r} = \sum_{i=1}^{r}(n(P_i)-1)/2$ if $B'-B''\ne\emptyset$ and $\overline{r}=0$ otherwise. We proceed based on the cardinality of $B''$. 
\vskip2mm
\noindent\textbf{Case 1:} Suppose first that $B''=\emptyset$. 
\vskip2mm

We must find a clique partition of $\mathcal{L}(G)$ of cardinality $\gamma(\mathcal{L}(G))$. Note in this case that  $\overline{k} =0$. Regardless of whether $Y\ne \emptyset$, we can enumerate $A = \{y_1, \dots, y_{\ell}, y_{\ell+1}, \dots, y_{|A|}\}$ such that if $G$ contains an induced diamond, then $u_i$ and $v_i$ of $D_i$ are adjacent to $y_i$. We can also enumerate $B- B' = \{b_1, \dots, b_{\ell}, b_{\ell+1}, \dots, b_n\}$ such that if $Y\ne \emptyset$, then $b_i$ is a leaf of $y_i$ that stays a leaf in $G$. As above, we know by Observation~\ref{lem:bipartiteBG} that $\mathcal{L}(G')$ is a BG graph and has a clique partition $V(\mathcal{L}(G')) = \theta_1 \cup \dots \cup \theta_{|A|}$ where each $\theta_i$ induces a clique in $\mathcal{L}(G')$. 

Suppose for the time being that $Y\ne \emptyset$. We may assume that if $1 \le i<i'\le \ell$ that $v(y_ib_i)$ and $v(y_{i'}b_{i'})$ are in different cliques in $\theta$. Therefore we may reindex if necessary so that for each $i\in[\ell]$ $v(y_ib_i) \in \theta_i$. Note further that $v(u_iy_i)$ and $v(v_iy_i)$ are adjacent to each vertex in $\theta_i$ as any vertex in $\theta_i$ must represent an edge in $G$ that is incident to $y_i$ (since $y_ib_i$ is a pendent edge). We define $\theta'_i = \theta_i \cup \{v(u_iy_i), v(v_iy_i)\}$. Moreover, $G[v(u_iw_i), v(v_iw_i), v(u_iv_i)]$ is a triangle in $\mathcal{L}(G)$. If $B' - B'' = \emptyset$, then  $\overline{r}=0$ and \[\theta' = \theta_1' \cup \cdots \cup \theta_{\ell}' \cup \theta_{\ell+1} \cup \cdots \cup \theta_{|A|} \cup \{v(u_iv_i), v(u_iw_i), v(v_iw_i): i \in [\ell]\}\]
is a clique partition of $\mathcal{L}(G)$ of cardinality $|A| + \ell$. On the other hand, if $B' - B''\ne \emptyset$, then $\theta'$ which is the union of 
 \[\theta_1' \cup \cdots \cup \theta_{\ell}' \cup \theta_{\ell+1} \cup \cdots \cup \theta_{|A|}\cup \{v(u_iv_i), v(u_iw_i), v(v_iw_i): i \in [\ell]\}  \]
 together with 
 \[ \bigcup_{i=1}^r \{v(g_j^ih_j^i), v(g_j^iz_i), v(h_j^iz_i): j \in [n_i]\}\]
is a clique partition of cardinality $|A| + \ell + \sum_{i=1}^r (n(P_i) -1)/2= |A| + \overline{\ell} + \overline{r}$. In either case, $\mathcal{L}(G)$ is a BG graph.

Finally, note that if $Y = \emptyset$, then $\overline{\ell} = 0$. Since in this case $B' = Y = \emptyset$ and by construction $B'' \cup Y \neq \emptyset$, it must be that $B'-B'' \ne \emptyset$. In this case,
 \[\theta' = \theta_1\cup \cdots  \cup \theta_{|A|} \cup  \bigcup_{i=1}^r \{v(g_j^ih_j^i), v(g_j^iz_i), v(h_j^iz_i): j \in [n_i]\}\]
is a clique partition of cardinality $|A| + \sum_{i=1}^r (n(P_i) -1)/2= |A| + \overline{r}$. 
\vskip2mm
\noindent\textbf{Case 2:} Suppose that $B''\ne \emptyset$.
\vskip2mm

If $B'-B''=\emptyset$, then we can write $B' = B'' = \{s_1, \dots, s_k\}$ where $s_i$ is identified with the nose of $W_i$ for each $i\in [k]$ when building $G$. Otherwise, $B' = \{x_1, \dots, x_r, s_1, \dots, s_k\}$ where $s_i$ is identified with the nose of $W_i$ for each $i\in [k]$ and $x_i$ is identified with the nose of $P_i$ for each $i\in [r]$. First, we define a supergraph $\widetilde{G}$ of $\mathcal{L}(G)$. For each $i\in [k]$, we focus on the line graph of $W_i$. We obtain $\widetilde{G}$ from $\mathcal{L}(G)$ by adding the edge $v(b_iz_i)v(c_id_i)$ for each $i \in [k]$. We claim that $\gamma(\widetilde{G}) = \gamma(\mathcal{L}(G))$. Suppose to the contrary that $\gamma(\widetilde{G}) < \gamma(\mathcal{L}(G))$. Thus, there is a dominating set $D$ of $\widetilde{G}$ containing either $v(b_iz_i)$ or $v(c_id_i)$ for some $i \in [k]$ where $|D| < \gamma(\mathcal{L}(G))$. Suppose first that $v(b_iz_i) \in D$. It follows that some $x \in \{v(a_iz_i), v(a_ib_i), v(a_id_i), v(c_id_i)\}$ is in $D$ in order to dominate $v(a_id_i)$. Moreover, there is at least one vertex in $D$ of the form $v(r_j^it_j^i), v(t_j^iz_i)$, or $v(r_j^iz_i)$ for each $j \in [m_i]$. If we let $F$ be the set of edges in $G$ corresponding to the vertices in $D$, then $F - (E(G)\cap E(W_i))$ edge dominates $H$ where $H$ is the component of $G-s_i$ that  contains any vertex in $V(G')-\{s_i\}$ and has cardinality at most $\gamma'(G) - 3 - m_i$. If $H$ is bipartite, then we know that $H$ is well-edge-dominated from the condition that $G'- s_i$ is well-edge-dominated. On the other hand, if $H$ is not bipartite, then $H \in \mathcal{G}$ as it is built from $G'-s_i$ (which is well-edge-dominated) by choosing $\widetilde{B'} \subset B - \{s_i\}$ and $Y\subseteq A$ such that 
\begin{itemize}
\item $\widetilde{B'}\cup Y \ne \emptyset$
\item $(G'-s_i) - \widetilde{B'}$ is well-edge-dominated with $\gamma'((G'-s_i)-\widetilde{B'}) = \gamma'(G'-s_i) = \gamma'(G')$ and $(G'-s_i)-\widetilde{B'}$ has no trivial components where $|A| \le |B-\widetilde{B'}|$
\end{itemize}
It follows that  $\gamma'(H) = \gamma'(G) - 2 - m_i$. Therefore, there cannot be an edge dominating set of $H$ of cardinality $\gamma'(G') - 3 - m_i$. It follows that $\gamma(\widetilde{G}) = \gamma(\mathcal{L}(G))$. Similarly, if we had instead assumed that $v(c_id_i) \in D$, then some vertex $x \in \{v(b_ic_i), v(a_id_i), v(a_ib_i), v(z_ib_i), v(z_ia_i)\}$ is in $D$ in order to dominate $v(a_ib_i)$, and again we arrive at the same contradiction as before. 

All we need to do now is find a clique partition $\theta'$ of $\widetilde{G}$ of cardinality $\gamma(\widetilde{G}) = |A| +\overline{ \ell} + \overline{k} + \overline{r}$. Suppose first that $Y\ne \emptyset$. As above, we let $\theta= \theta_1\cup \cdots \cup\theta_{|A|}$ be a clique partition of $\mathcal{L}(G')$ where we may assume that for $i\in[\ell]$ we have $v(y_ib_i) \in \theta_i$. For each $i\in [\ell]$, we define  $\theta'_i = \theta_i \cup \{v(u_iy_i), v(v_iy_i)\}$. If $B' - B'' = \emptyset$, then $\overline{r} = 0$ and
\[\theta' = \theta_1' \cup \cdots \cup \theta_{\ell}' \cup \theta_{\ell+1} \cup \cdots \cup \theta_{|A|} \cup \{v(u_iv_i), v(u_iw_i), v(v_iw_i): i \in [\ell]\}\cup \bigcup_{i=1}^k \Omega_i\]
where $\Omega_i = \{v(b_iz_i), v(b_ic_i), v(c_id_i)\} \cup \{v(a_ib_i), v(a_iz_i), v(a_id_i)\}$ if $W_i$ is the house graph and otherwise \[\Omega_i = \{v(b_iz_i), v(b_ic_i), v(c_id_i)\} \cup \{v(a_ib_i), v(a_iz_i), v(a_id_i)\} \cup\{v(r_j^it_j^i), v(r_j^iz_i)v(t_j^iz_i): j\in [m_i]\}\] is a clique partition of $\widetilde{G}$ with cardinality $|A| +\overline{ \ell} + \overline{k}$. 

If $B' - B'' \ne \emptyset$, then 
 \[\theta' = \theta_1' \cup \cdots \cup \theta_{\ell}' \cup \theta_{\ell+1} \cup \cdots \cup \theta_{|A|} \cup \{v(u_iv_i), v(u_iw_i), v(v_iw_i): i \in [\ell]\} \cup \bigcup_{i=1}^r \Pi_i \cup\bigcup_{i=1}^k \Omega_i\]
where $\Omega_i$ is defined as above and $\Pi_i =  \{v(g_j^ih_j^i), v(g_j^iz_i), v(h_j^iz_i): j \in [n_i]\}$ is a clique partition of $\widetilde{G}$ with cardinality $|A| +\overline{ \ell} + \overline{k} + \overline{r}$. 

Finally, suppose that $Y=\emptyset$. In this case,  $\overline{\ell} = 0$. Let $\theta= \theta_1\cup \cdots \cup\theta_{|A|}$ be a clique partition of $\mathcal{L}(G')$. If $B' - B''= \emptyset$, then $\overline{r} = 0$ and
\[\theta' = \theta_1\cup \cdots \cup \theta_{|A|} \cup\bigcup_{i=1}^k \Omega_i\]

where $\Omega_i = \{v(b_iz_i), v(b_ic_i), v(c_id_i)\} \cup \{v(a_ib_i), v(a_iz_i), v(a_id_i)\}$ if $W_i$ is the house graph and otherwise \[\Omega_i = \{v(b_iz_i), v(b_ic_i), v(c_id_i)\} \cup \{v(a_ib_i), v(a_iz_i), v(a_id_i)\} \cup\{v(r_j^it_j^i), v(r_j^iz_i)v(t_j^iz_i): j\in [m_i]\}\] is a clique partition of $\widetilde{G}$ with cardinality $|A| + \overline{k}$. On the other hand, if $B' - B'' \ne \emptyset$, then 
\[\theta' = \theta_1\cup \cdots \cup \theta_{|A|} \cup  \bigcup_{i=1}^r \Pi_i \cup\bigcup_{i=1}^k \Omega_i\]
where $\Omega_i$ is defined as above and $\Pi_i =  \{v(g_j^ih_j^i), v(g_j^iz_i), v(h_j^iz_i): j \in [n_i]\}$ is a clique partition of $\widetilde{G}$ with cardinality $|A| +\overline{k} + \overline{r}$.
\smallqed

\vskip5mm
For the remainder of the proof, we may assume that $G$ contains a $K_4$. That is, we may assume that $G\in \mathcal{G^*}$ is obtained from $G' \in \mathcal{G}\cup \mathcal{B}$ by performing a sequence of the $K_4$-Operation. This means that we can find a set of pendent edges $\{a_1b_1, \dots, a_jb_j\}$ in $G'$ such that we have performed the $K_4$-Operation on each $a_ib_i$ for $i \in [j]$ and $b_i$ is a leaf in $G'$ for $i\in [j]$. We shall assume that for $a_ib_i$ that we have added the additional vertices $x_i$ and $y_i$ and all the edges of the form $\{a_ix_i, a_iy_i, x_iy_i, b_iy_i, b_ix_i\}$. 

Suppose first that $G' \in \mathcal{B}$. That is, assume that  $G' = (A\cup B, E)$ is a connected, nontrivial, well-edge-dominated bipartite graph where $|A|<|B|$. This means that we can write $A = \{a_1, \dots, a_j, a_{j+1}, \dots, a_m\}$ and $B = \{b_1, \dots, b_j, b_{j+1}, \dots, b_n\}$ where $m<n$. Note that since $G$ is well-edge-dominated, $\gamma'(G) = \alpha'(G)$. Moreover, we can find a maximal matching for $G$ by taking any maximal matching of $G'$ that saturates $A$ together with $\{x_iy_i: i \in [j]\}$. Thus, $\gamma'(G) = |A| + j$. It follows that $\gamma(\mathcal{L}(G)) = |A|+j$. All we need to show is that we can find a clique partition of $\mathcal{L}(G)$ of cardinality $|A|+j$. First, note that by Observation~\ref{lem:bipartiteBG} we know that $\mathcal{L}(G')$ is a BG graph and has a clique partition $V(\mathcal{L}(G')) = \theta_1 \cup \dots \cup \theta_{|A|}$ where each $\theta_i$ induces a clique in $\mathcal{L}(G')$. Note that for each $i\in [j]$ the vertex in $\mathcal{L}(G')$ associated with $a_ib_i$ is in some clique $\theta_{\ell}$ for some $\ell \in [|A|]$. Additionally, if $1\le i < i'\le j$, then $v(a_ib_i)$ and $v(a_{i'}b_{i'})$ are in different cliques of the clique partition. So we may reindex if necessary so that $v(a_ib_i)$ is contained in $\theta_i$ for $i \in [j]$. Now consider the edges in the $i^{th}$ $K_4$ in $G$. As $v(a_ib_i) \in \theta_i$, then $v(a_ix_i)$ and $v(a_iy_i)$ are each adjacent to all vertices in $\theta_i$ since $a_ib_i$ is a pendent edge. Therefore, we can define $\theta'_i= \theta_i \cup \{v(a_ix_i), v(a_iy_i)\}$. Moreover, $G[\{v(x_iy_i), v(x_ib_i),v(y_ib_i)\}]$ is a triangle in $\mathcal{L}(G)$. It follows that 
\[\theta' = \theta_1' \cup \dots \cup \theta_j' \cup \theta_{j+1}\cup \cdots \cup \theta_{|A|} \cup \bigcup_{i=1}^j \{v(x_ib_i),v(y_ib_i),v(x_iy_i)\}\]
is a clique partition of $\mathcal{L}(G)$ of cardinality $|A| + j$.

Finally, assume that $G' \in \mathcal{G}$. From Claim 1, we know that $\mathcal{L}(G')$ is a spanning subgraph of a BG graph $\widetilde{G}$ with $\gamma(\mathcal{L}(G')) = \gamma(\widetilde{G})$ and there is some partition $V(\widetilde{G}) = \theta_1 \cup \cdots \cup \theta_{|M|}$ where each $\theta_i$ induces a clique in $\widetilde{G}$, $|M| = \gamma'(G')$, and $v(a_ib_i) \in \theta_i$ for $i\in[j]$. As in Claim 1, if $B''= \emptyset$, then we set $H = \mathcal{L}(G)$, and if $B''\ne \emptyset$, then we define $H$ to be the graph obtained from $\mathcal{L}(G)$ by adding the edge $v(b_iz_i)v(c_id_i)$ for each $i\in [k]$. Using the same argument as above, one can show that indeed $\gamma(H) = \gamma(\mathcal{L}(G))$. Furthermore, if for $i\in [j]$ we set $\theta_i' = \theta_i \cup \{v(x_ia_i), v(y_ia_i)\}$, then 
\[\theta'_1 \cup \cdots \cup \theta_j' \cup \theta_{j+1} \cup \cdots \cup \theta_{|M|} \cup \bigcup_{i\in[j]}\{v(x_ib_i), v(y_ib_i), v(x_iy_i)\}\]
is a clique partition of $H$. It follows that $G$ is a spanning subgraph of the BG graph $H$.

\end{proof}

\begin{corollary} If $G \in \mathcal{G^*}$, then $\mathcal{L}(G)$ satisfies Vizing's conjecture. In particular, for any graph $H$, $\gamma(\mathcal{L}(G)\Box H) \ge \gamma(\mathcal{L}(G))\gamma(H)$.
\end{corollary}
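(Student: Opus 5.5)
The plan is to combine the preceding theorem with the Barcalkin--German result and a monotonicity argument for domination of Cartesian products under adding edges. By the preceding theorem, for $G \in \mathcal{G^*}$ there is a BG graph $\widetilde{G}$ on the same vertex set as $\mathcal{L}(G)$, with $\mathcal{L}(G)$ a spanning subgraph of $\widetilde{G}$ and $\gamma(\widetilde{G}) = \gamma(\mathcal{L}(G))$. By Barcalkin and German \cite{BG-1979}, every BG graph satisfies Vizing's conjecture, so for any graph $H$ we have $\gamma(\widetilde{G}\Box H) \ge \gamma(\widetilde{G})\gamma(H)$.

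The key step is then to compare $\gamma(\mathcal{L}(G)\Box H)$ with $\gamma(\widetilde{G}\Box H)$. Since $V(\mathcal{L}(G)) = V(\widetilde{G})$ and $E(\mathcal{L}(G)) \subseteq E(\widetilde{G})$, the product $\mathcal{L}(G)\Box H$ is a spanning subgraph of $\widetilde{G}\Box H$. Indeed, an edge $(g,h)(g',h)$ of the first product comes from $gg' \in E(\mathcal{L}(G)) \subseteq E(\widetilde{G})$, and edges of the form $(g,h)(g,h')$ are identical in both products. A dominating set of a spanning subgraph remains dominating after edges are added, so $\gamma(\mathcal{L}(G)\Box H) \ge \gamma(\widetilde{G}\Box H)$.

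Chaining these gives
\[\gamma(\mathcal{L}(G)\Box H) \ge \gamma(\widetilde{G}\Box H) \ge \gamma(\widetilde{G})\gamma(H) = \gamma(\mathcal{L}(G))\gamma(H),\]
which is the claim. There is no real obstacle here: all the substantive work (the equality $\gamma(\widetilde{G}) = \gamma(\mathcal{L}(G))$ and the clique partition of size $\gamma(\widetilde{G})$) was done in the preceding theorem. The only point to check carefully is the direction of the monotonicity: adding edges can only decrease the domination number, and the equality of domination numbers is exactly what allows the factor $\gamma(\widetilde{G})$ to be replaced by $\gamma(\mathcal{L}(G))$.
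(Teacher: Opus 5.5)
Your proposal is correct and is the deduction the paper intends. The preceding theorem supplies the BG supergraph $\widetilde{G}$ with $\gamma(\widetilde{G})=\gamma(\mathcal{L}(G))$, Barcalkin--German gives the bound for $\widetilde{G}\Box H$, and spanning-subgraph monotonicity carries it to $\mathcal{L}(G)\Box H$. The paper states the corollary without proof, and your chain of inequalities writes out exactly this omitted step.
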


\section{Concluding Remarks}

Based on the results on Section 3, we have a structural description of the nonbipartite, well-edge-dominated graphs. One may think that finding a structural description for the bipartite, well-edge-dominated graphs is trivial. However, we have not been able to make any progress on that front. This is particularly frustrating considering bipartite, well-edge-dominated graphs form the backbone of the nonbipartite, well-edge-dominated graphs. Therefore, we pose this as a problem. 

\vskip5mm
\noindent\textbf{Problem 1:} \emph{Provide a structural description of the class of all bipartite, well-edge-dominated graphs.}
\vskip5mm

Next, we point out that the set of all well-edge-dominated graphs is a proper subset of the set of all equimatchable graphs. Now that we can describe any nonbipartite, well-edge-dominated graph, is it possible to find a structural description of all nonbipartite, equimatchable graphs which are not well-edge-dominated? We pose this as another problem. 

\vskip5mm
\noindent\textbf{Problem 2:} \emph{Provide a structural description of the class of all nonbipartite, equimatchable graphs that are not well-edge-dominated. }
\vskip5mm

Throughout the paper, we also considered the vertex versions of these problems by considering well-dominated graphs. Classifying well-dominated graphs of girth $4$ has proven to be quite difficult. However, we were able to create an infinite class of well-dominated graphs with girth $3$ through the line graphs of well-edge-dominated graphs. Is it possible to characterize well-dominated graphs with girth $4$ that are not induced subgraphs of a well-dominated line graph with the same domination number? That is, perhaps one should focus on maximal well-dominated graphs $G$ of girth $4$ in that for any $uv \not\in E(G)$, $G+\{uv\}$ is not well-dominated. In this way, one would focus on edge criticality in well-dominated graphs. We pose this as another problem.

\vskip5mm
\noindent\textbf{Problem 3:} \emph{Provide a structural description of the class of all maximal well-dominated graphs of girth $4$.)}

\newpage

\subsection{Connected, well-edge-dominated graphs with $7$ vertices}

\begin{figure}[h!]
\begin{center}
\begin{tikzpicture}[scale=.77]

    \vertex (01) at (0.5,7)  [scale=.75, fill=black, label=below:$$]{};
    \vertex (02) at (0.5,8)  [scale=.75, fill=black, label=left:$$]{};
    \vertex (03) at (1.5, 7.5)  [scale=.75, fill=black]{};
    \vertex (04) at (1.5, 8.5)  [scale=.75, fill=black, label=above:$$]{};
    \vertex (05) at (2.5, 8)  [scale=.75, fill=black, label=right:$$]{};
    \vertex (06) at (2.5, 7)  [scale=.75, fill=black, label=below:$$]{};
    \vertex (07) at (1.5, 6.5)  [scale=.75, fill=black, label=below:$$]{};
  
     \vertex (0B1) at (5,7)  [scale=.75, fill=black, label=below:$$]{};
    \vertex (0B2) at (5,8)  [scale=.75, fill=black, label=left:$$]{};
    \vertex (0B3) at (6, 7.5)  [scale=.75, fill=black]{};
    \vertex (0B4) at (6, 8.5)  [scale=.75, fill=black, label=above:$$]{};
    \vertex (0B5) at (7, 8)  [scale=.75, fill=black, label=right:$$]{};
    \vertex (0B6) at (7, 7)  [scale=.75, fill=black, label=below:$$]{};
    \vertex (0B7) at (6, 6.5)  [scale=.75, fill=black, label=below:$$]{};
    
       \vertex (0C1) at (9.5,7)  [scale=.75, fill=black, label=below:$$]{};
    \vertex (0C2) at (9.5,8)  [scale=.75, fill=black, label=left:$$]{};
    \vertex (0C3) at (10.5, 7.5)  [scale=.75, fill=black]{};
    \vertex (0C4) at (10.5, 8.5)  [scale=.75, fill=black, label=above:$$]{};
    \vertex (0C5) at (11.5, 8)  [scale=.75, fill=black, label=right:$$]{};
    \vertex (0C6) at (11.5, 7)  [scale=.75, fill=black, label=below:$$]{};
    \vertex (0C7) at (10.5, 6.5)  [scale=.75, fill=black, label=below:$$]{};

       \vertex (0D1) at (14,7)  [scale=.75, fill=black, label=below:$$]{};
    \vertex (0D2) at (14,8)  [scale=.75, fill=black, label=left:$$]{};
    \vertex (0D3) at (15, 7.5)  [scale=.75, fill=black]{};
    \vertex (0D4) at (15, 8.5)  [scale=.75, fill=black, label=above:$$]{};
    \vertex (0D5) at (16, 8)  [scale=.75, fill=black, label=right:$$]{};
    \vertex (0D6) at (16, 7)  [scale=.75, fill=black, label=below:$$]{};
    \vertex (0D7) at (15, 6.5)  [scale=.75, fill=black, label=below:$$]{};

    \node(A) at (1.5, 5.75) []{$W_1$};
    \node(B) at (6, 5.75)[]{$W_2$};
    \node(C) at (10.5, 5.75)[]{$W_3$};
    \node(D) at (15, 5.75)[]{$W_4$};

    \vertex (1) at (.5,4.5)  [scale=.75, fill=black, label=below:]{};
    \vertex (2) at (1.5, 4.5)  [scale=.75, fill=black, label=left:]{};
    \vertex (3) at (1, 3.5)  [scale=.75, fill=black]{};
    \vertex (4) at (1, 2.5)  [scale=.75, fill=black, label=above:]{};
    \vertex (5) at (.5, 1.5)  [scale=.75, fill=black, label=right:]{};
    \vertex (6) at (1, 1.5)  [scale=.75, fill=black, label=below:]{};
    \vertex (7) at (1.5, 1.5)  [scale=.75, fill=black, label=below:]{};
  
     \vertex (B1) at (4,4.5)  [scale=.75, fill=black, label=below:]{};
    \vertex (B2) at (5,4.5)  [scale=.75, fill=black, label=left:]{};
    \vertex (B3) at (4.5, 3.5)  [scale=.75, fill=black]{};
    \vertex (B4) at (4, 2.5)  [scale=.75, fill=black, label=above:]{};
    \vertex (B5) at (5, 2.5)  [scale=.75, fill=black, label=right:]{};
    \vertex (B6) at (4, 1.5)  [scale=.75, fill=black, label=below:]{};
    \vertex (B7) at (5, 1.5)  [scale=.75, fill=black, label=below:]{};

        \vertex (C1) at (8,4.5)  [scale=.75, fill=black, label=below:]{};
    \vertex (C2) at (9,4.5)  [scale=.75, fill=black, label=left:]{};
    \vertex (C3) at (8, 3.5)  [scale=.75, fill=black]{};
    \vertex (C4) at (9, 3.5)  [scale=.75, fill=black, label=above:]{};
    \vertex (C5) at (8.5, 2.5)  [scale=.75, fill=black, label=right:]{};
    \vertex (C6) at (8.5, 2)  [scale=.75, fill=black, label=below:]{};
    \vertex (C7) at (8.5, 1.5)  [scale=.75, fill=black, label=below:]{};

        \vertex (D1) at (11.5,4.5)  [scale=.75, fill=black, label=below:]{};
    \vertex (D2) at (12.5,4.5)  [scale=.75, fill=black, label=left:]{};
    \vertex (D3) at (11.5, 3.5)  [scale=.75, fill=black]{};
    \vertex (D4) at (12.5, 3.5)  [scale=.75, fill=black, label=above:]{};
    \vertex (D5) at (12, 2.5)  [scale=.75, fill=black, label=right:]{};
    \vertex (D6) at (11.5, 1.5)  [scale=.75, fill=black, label=below:]{};
    \vertex (D7) at (12.5, 1.5)  [scale=.75, fill=black, label=below:]{};
    
     \vertex (E1) at (15,4.5)  [scale=.75, fill=black, label=below:]{};
    \vertex (E2) at (16,4.5)  [scale=.75, fill=black, label=left:]{};
    \vertex (E3) at (15, 3.5)  [scale=.75, fill=black]{};
    \vertex (E4) at (16, 3.5)  [scale=.75, fill=black, label=above:]{};
    \vertex (E5) at (15.5, 2.5)  [scale=.75, fill=black, label=right:]{};
    \vertex (E6) at (15, 1.5)  [scale=.75, fill=black, label=below:]{};
    \vertex (E7) at (16, 1.5)  [scale=.75, fill=black, label=below:]{};
    
         \vertex (F1) at (1.5,-.5)  [scale=.75, fill=black, label=below:]{};
    \vertex (F2) at (2.5,-.5)  [scale=.75, fill=black, label=left:]{};
    \vertex (F3) at (1.5, -1.5)  [scale=.75, fill=black]{};
    \vertex (F4) at (2.5, -1.5)  [scale=.75, fill=black, label=above:]{};
    \vertex (F5) at (1.5, -2.5)  [scale=.75, fill=black, label=right:]{};
    \vertex (F6) at (2.5, -2.5)  [scale=.75, fill=black, label=below:]{};
    \vertex (F7) at (2, -3.5)  [scale=.75, fill=black, label=below:]{};

         \vertex (G1) at (6,-.5)  [scale=.75, fill=black, label=below:]{};
    \vertex (G2) at (7,-.5)  [scale=.75, fill=black, label=left:]{};
    \vertex (G3) at (6.5, -1.5)  [scale=.75, fill=black]{};
    \vertex (G4) at (6, -2.5)  [scale=.75, fill=black, label=above:]{};
    \vertex (G5) at (7, -2.5)  [scale=.75, fill=black, label=right:]{};
    \vertex (G6) at (6.5, -3)  [scale=.75, fill=black, label=below:]{};
    \vertex (G7) at (6.5, -3.5)  [scale=.75, fill=black, label=below:]{};

         \vertex (H1) at (10,-.5)  [scale=.75, fill=black, label=below:]{};
    \vertex (H2) at (11,-.5)  [scale=.75, fill=black, label=left:]{};
    \vertex (H3) at (10.5, -1.5)  [scale=.75, fill=black]{};
    \vertex (H4) at (10.5, -2.5)  [scale=.75, fill=black, label=above:]{};
    \vertex (H5) at (10, -3)  [scale=.75, fill=black, label=right:]{};
    \vertex (H6) at (11, -3)  [scale=.75, fill=black, label=below:]{};
    \vertex (H7) at (10.5, -3.5)  [scale=.75, fill=black, label=below:]{};
    
      \vertex (I1) at (14.25,-1)  [scale=.75, fill=black, label=below:]{};
    \vertex (I2) at (15.25,-1)  [scale=.75, fill=black, label=left:]{};
    \vertex (I3) at (14.75, -2)  [scale=.75, fill=black]{};
    \vertex (I4) at (13.75, -2.25)  [scale=.75, fill=black, label=above:]{};
    \vertex (I5) at (14.25, -3)  [scale=.75, fill=black, label=right:]{};
    \vertex (I6) at (15.25, -3)  [scale=.75, fill=black, label=below:]{};
    \vertex (I7) at (15.75, -2.25)  [scale=.75, fill=black, label=below:]{};
    
        \vertex (J1) at (0.5,-7.25)  [scale=.75, fill=black, label=below:$$]{};
    \vertex (J2) at (0.5,-6.25)  [scale=.75, fill=black, label=left:$$]{};
    \vertex (J3) at (1.5, -6.75)  [scale=.75, fill=black]{};
    \vertex (J4) at (1.5, -5.75)  [scale=.75, fill=black, label=above:$$]{};
    \vertex (J5) at (2.5, -6.25)  [scale=.75, fill=black, label=right:$$]{};
    \vertex (J6) at (2.5, -7.25)  [scale=.75, fill=black, label=below:$$]{};
    \vertex (J7) at (1.5, -7.75)  [scale=.75, fill=black, label=below:$$]{};
    
            \vertex (K1) at (5,-6.25)  [scale=.75, fill=black, label=below:$$]{};
    \vertex (K2) at (6,-5.75)  [scale=.75, fill=black, label=left:$$]{};
    \vertex (K3) at (7, -6.25)  [scale=.75, fill=black]{};
    \vertex (K4) at (6, -6.75)  [scale=.75, fill=black, label=above:$$]{};
    \vertex (K5) at (5, -7.75)  [scale=.75, fill=black, label=right:$$]{};
    \vertex (K6) at (7, -7.75)  [scale=.75, fill=black, label=below:$$]{};
    \vertex (K7) at (6, -8.25)  [scale=.75, fill=black, label=below:$$]{};
    
                \vertex (L1) at (9.5,-5.75)  [scale=.75, fill=black, label=below:$$]{};
    \vertex (L2) at (9.5,-6.75)  [scale=.75, fill=black, label=left:$$]{};
    \vertex (L3) at (9.5, -7.75)  [scale=.75, fill=black]{};
    \vertex (L4) at (10, -8.75)  [scale=.75, fill=black, label=above:$$]{};
    \vertex (L5) at (10.5, -7.75)  [scale=.75, fill=black, label=right:$$]{};
    \vertex (L6) at (10.5, -6.75)  [scale=.75, fill=black, label=below:$$]{};
    \vertex (L7) at (10.5, -5.75)  [scale=.75, fill=black, label=below:$$]{};
    
                \vertex (M1) at (13,-6.25)  [scale=.75, fill=black, label=below:$$]{};
    \vertex (M2) at (14,-5.75)  [scale=.75, fill=black, label=left:$$]{};
    \vertex (M3) at (15, -6.25)  [scale=.75, fill=black]{};
    \vertex (M4) at (14, -6.75)  [scale=.75, fill=black, label=above:$$]{};
    \vertex (M5) at (13, -7.75)  [scale=.75, fill=black, label=right:$$]{};
    \vertex (M6) at (15, -7.75)  [scale=.75, fill=black, label=below:$$]{};
    \vertex (M7) at (14, -8.25)  [scale=.75, fill=black, label=below:$$]{};
    
   \vertex (N1) at (0.5,-11.5)  [scale=.75, fill=black, label=below:$$]{};
    \vertex (N2) at (0.5,-10.5)  [scale=.75, fill=black, label=left:$$]{};
    \vertex (N3) at (1.5, -11)  [scale=.75, fill=black]{};
    \vertex (N4) at (1.5, -10)  [scale=.75, fill=black, label=above:$$]{};
    \vertex (N5) at (2.5, -10.5)  [scale=.75, fill=black, label=right:$$]{};
    \vertex (N6) at (2.5, -11.5)  [scale=.75, fill=black, label=below:$$]{};
    \vertex (N7) at (1.5, -12)  [scale=.75, fill=black, label=below:$$]{};
    
       \vertex (O1) at (5,-11.5)  [scale=.75, fill=black, label=below:$$]{};
    \vertex (O2) at (5,-10.5)  [scale=.75, fill=black, label=left:$$]{};
    \vertex (O3) at (6, -10.5)  [scale=.75, fill=black]{};
    \vertex (O4) at (6, -11.5)  [scale=.75, fill=black, label=above:$$]{};
    \vertex (O5) at (7, -11.5)  [scale=.75, fill=black, label=right:$$]{};
    \vertex (O6) at (8, -11.5)  [scale=.75, fill=black, label=below:$$]{};
    \vertex (O7) at (9, -11.5)  [scale=.75, fill=black, label=below:$$]{};

    \node(A) at (1, .75) []{$W_5$};
    \node(B) at (4.5, .75)[]{$W_6$};
    \node(C) at (8.5, .75)[]{$W_7$};
     \node(D) at (12, .75)[]{$W_8$};
      \node(E) at (15.5, .75)[]{$W_9$};
       \node(F) at (2, -4.5)[]{$W_{10}$};
         \node(G) at (6.5, -4.5)[]{$W_{11}$};
           \node(H) at (10.5, -4.5)[]{$W_{12}$};
             \node(I) at (14.5, -4.5)[]{$W_{13}$};
                \node(J) at (1.5, -9.)[]{$W_{14}$};    
          \node(K) at (6, -9.5)[]{$W_{15}$};
            \node(L) at (10, -9.5)[]{$W_{16}$};
               \node(M) at (14, -9.5)[]{$W_{17}$};
               
              \node(N) at (1.5, -13)[]{$W_{18}$};
               \node(O) at (6.5, -13)[]{$W_{19}$};

    \path 
    	(01) edge (02)
	(02) edge (03)
	(03) edge (04)
	(04) edge (05)
	(05) edge (06)
	(06) edge (07)
	(01) edge (07)
	(02) edge (04)
	(03) edge (05)
	
	(0B1) edge (0B2)
	(0B2) edge (0B3)
	(0B3) edge (0B4)
	(0B4) edge (0B5)
	(0B5) edge (0B6)
	(0B6) edge (0B7)
	(0B1) edge (0B7)
	(0B2) edge (0B4)
	(0B3) edge (0B5)
	(0B3) edge (0B7)
	
  	(0C1) edge (0C2)
	(0C2) edge (0C3)
	(0C3) edge (0C4)
	(0C4) edge (0C5)
	(0C5) edge (0C6)
	(0C6) edge (0C7)
	(0C1) edge (0C7)
	(0C2) edge (0C4)
	(0C3) edge (0C5)  
	(0C4) edge (0C6)
	
	(0D1) edge (0D2)
	(0D2) edge [bend right] (0D6)
	(0D2) edge  (0D7)
	(0D2) edge (0D3)
	(0D3) edge (0D4)
	(0D4) edge (0D5)
	(0D2) edge (0D4)
	(0D3) edge (0D5)  
	
	(1) edge (2)
	(1) edge (3)
	(2) edge (3)
	(4) edge (3)
	(4) edge (5)
	(4) edge (6)
	(4) edge (7)
	
	(B1) edge (B2)
	(B1) edge (B3)
	(B2) edge (B3)
	(B3) edge (B4)
	(B3) edge (B5)
	(B4) edge (B6)
	(B5) edge (B7)

	(C1) edge (C2)
  	(C1) edge (C3)
	(C2) edge (C4)
	(C3) edge (C4)
	(C4) edge (C5)
	(C3) edge (C5)
	(C5) edge (C6)
	(C6) edge (C7)
	
		(D1) edge (D2)
  	(D1) edge (D3)
	(D2) edge (D4)
	(D3) edge (D4)
	(D4) edge (D5)
	(D3) edge (D5)
	(D5) edge (D6)
	(D5) edge (D7)
	(D6) edge (D7)
	
		(E1) edge (E2)
  	(E1) edge (E3)
	(E2) edge (E4)
	(E3) edge (E4)
	(E4) edge (E5)
	(E5) edge (E6)
	(E5) edge (E7)
	(E6) edge (E7)
	
		(F1) edge (F2)
  	(F1) edge (F3)
	(F2) edge (F4)
	(F3) edge (F4)
	(F4) edge (F6)
		(F3) edge (F5)
	(F5) edge (F6)
	(F5) edge (F7)
	(F6) edge (F7)
	
	(G1) edge (G2)
  	(G1) edge (G3)
	(G2) edge (G3)
	(G3) edge (G4)
	(G3) edge (G5)
		(G4) edge (G5)
	(G3) edge (G6)
	(G6) edge (G7)

	(H1) edge (H2)
  	(H1) edge (H3)
	(H2) edge (H3)
	(H3) edge (H4)
	(H4) edge (H5)
		(H5) edge (H6)
		(H1) edge (H5)
  	(H2) edge (H6)
	(H4) edge (H3)
	(H7) edge (H4)
	(H6) edge (H7)
	
	(I1) edge (I2)
  	(I1) edge (I3)
	(I2) edge (I3)
	(I3) edge (I4)
	(I4) edge (I5)
		(I5) edge (I3)
  	(I3) edge (I6)
	(I3) edge (I7)
	(I6) edge (I7)
	
	(J1) edge (J2)
	(J2) edge (J3)
	(J3) edge (J4)
	(J4) edge (J5)
	(J7) edge (J6)
	(J1) edge (J7)
	(J5) edge (J6)
	(J2) edge (J4)
	(J5) edge (J2)
	(J5) edge (J3)

	(K1) edge (K2)
	(K2) edge (K3)
	(K3) edge (K4)
	(K1) edge (K4)
	(K1) edge (K3)
	(K2) edge (K4)
	(K1) edge (K5)
	(K3) edge (K6)
	(K5) edge (K6)
	(K6) edge (K7)
	(K5) edge (K7)
	(K4) edge (K7)
	
	(L1) edge (L2)
	(L2) edge (L3)
	(L3) edge (L4)
	(L4) edge (L5)
	(L5) edge (L6)
	(L6) edge (L7)
	(L1) edge (L7)
	(L1) edge (L6)
	(L2) edge (L6)
	(L2) edge (L7)
	(L3) edge (L5)
	
		(M1) edge (M2)
	(M2) edge (M3)
	(M3) edge (M4)
	(M1) edge (M4)
	(M1) edge (M3)
	(M2) edge (M4)
	(M5) edge (M6)
	(M6) edge (M7)
	(M5) edge (M7)
	(M4) edge (M7)
	
	(N5) edge (N2)
	(N2) edge (N3)
	(N3) edge (N4)
	(N5) edge (N4)
	(N2) edge (N4)
	(N5) edge (N3)
	(N3) edge (N1)
	(N3) edge (N6)
	(N3) edge (N7)
	
	(O1) edge (O2)
	(O2) edge (O3)
	(O3) edge (O4)
	(O4) edge (O1)
	(O2) edge (O4)
	(O1) edge (O3)
	(O4) edge (O5)
	(O5) edge (O6)
	(O6) edge (O7)

    ;

\end{tikzpicture}
\caption{All connected well-edge dominated graphs containing a triangle of order $7$ where only $W_1, W_2, W_3$, and $W_4$ contain an induced diamond, and only $W_{14}, W_{15}, W_{16}, W_{17}, W_{18}$, and $W_{19}$ contain a $K_4$\NewSarah{.}}
\label{ND7}
\end{center}
\end{figure}

\newpage
\subsection{Connected, well-edge-dominated graphs with $8$ vertices}

\begin{figure}[h!]
\begin{center}
\begin{tikzpicture}[scale=.77]

    \vertex (01) at (0,7)  [scale=.75, fill=black, label=below:]{};
    \vertex (02) at (.5,8.5)  [scale=.75, fill=black, label=left:]{};
    \vertex (03) at (1, 7.5)  [scale=.75, fill=black, label=right:]{};
    \vertex (04) at (1, 9.5)  [scale=.75, fill=black, label=above:]{};
    \vertex (05) at (1.5, 8.5)  [scale=.75, fill=black, label=right:]{};
    \vertex (06) at (1.5, 6.5)  [scale=.75, fill=black, label=below:]{};
    \vertex (07) at (.5, 6.5)  [scale=.75, fill=black, label=below:]{};
     \vertex (08) at (2, 7)  [scale=.75, fill=black, label=below:]{};
  
     \vertex (0B1) at (4.5,7)  [scale=.75, fill=black, label=below:]{};
    \vertex (0B2) at (4.5,8.5)  [scale=.75, fill=black, label=left:]{};
    \vertex (0B3) at (5, 7.5)  [scale=.75, fill=black, label=below:]{};
    \vertex (0B4) at (5, 9.5)  [scale=.75, fill=black, label=above:]{};
    \vertex (0B5) at (5.5, 8.5)  [scale=.75, fill=black, label=right:]{};
    \vertex (0B6) at (6.5, 6)  [scale=.75, fill=black, label=below:]{};
    \vertex (0B7) at (6, 6.5)  [scale=.75, fill=black, label=below:]{};
    \vertex (0B8) at (5.5, 7)  [scale=.75, fill=black, label=below:]{};

       \vertex (0C1) at (9,7)  [scale=.75, fill=black, label=below:]{};
    \vertex (0C2) at (9,8.5)  [scale=.75, fill=black, label=left:]{};
    \vertex (0C3) at (9.5, 7.5)  [scale=.75, fill=black, label=below:]{};
    \vertex (0C4) at (9.5, 9.5)  [scale=.75, fill=black, label=above:]{};
    \vertex (0C5) at (10, 8.5)  [scale=.75, fill=black, label=right:]{};
    \vertex (0C6) at (10, 6)  [scale=.75, fill=black, label=below:]{};
    \vertex (0C7) at (10.75, 6.5)  [scale=.75, fill=black, label=right:]{};
    \vertex (0C8) at (10, 7)  [scale=.75, fill=black, label=right:]{};
    
      \vertex (1) at (13.5,9.5)  [scale=.75, fill=black, label=above:]{};
    \vertex (2) at (14.5, 9.5)  [scale=.75, fill=black, label=above:]{};
    \vertex (3) at (14, 8.5)  [scale=.75, fill=black, label=right:]{};
    \vertex (4) at (14, 7.5)  [scale=.75, fill=black, label=right:]{};
    \vertex (5) at (13.25, 6.5)  [scale=.75, fill=black, label=below:]{};
    \vertex (6) at (13.75, 6.5)  [scale=.75, fill=black, label=below:]{};
    \vertex (7) at (14.25, 6.5)  [scale=.75, fill=black, label=below:]{};
     \vertex (8) at (14.75, 6.5)  [scale=.75, fill=black, label=below:]{};

    \node(A) at (1, 5) []{$V_1$};
    \node(B) at (5, 5)[]{$V_2$};
    \node(C) at (9.5, 5)[]{$V_3$};
    \node(A) at (14, 5) []{$V_4$};

     \vertex (B1) at (.5,4)  [scale=.75, fill=black, label=below:]{};
    \vertex (B2) at (1.5,4)  [scale=.75, fill=black, label=left:]{};
    \vertex (B3) at (1, 3)  [scale=.75, fill=black]{};
    \vertex (B4) at (.5, 2)  [scale=.75, fill=black, label=above:]{};
    \vertex (B5) at (1.5, 2)  [scale=.75, fill=black, label=right:]{};
    \vertex (B6) at (.5, 1)  [scale=.75, fill=black, label=below:]{};
    \vertex (B7) at (1.25, 1)  [scale=.75, fill=black, label=below:]{};
       \vertex (B8) at (1.75, 1)  [scale=.75, fill=black, label=below:]{};

        \vertex (C1) at (4.5,4)  [scale=.75, fill=black, label=below:]{};
    \vertex (C2) at (5.5,4)  [scale=.75, fill=black, label=left:]{};
    \vertex (C3) at (5, 3)  [scale=.75, fill=black]{};
    \vertex (C4) at (5, 2.5)  [scale=.75, fill=black, label=above:]{};
    \vertex (C5) at (4.5, 2)  [scale=.75, fill=black, label=right:]{};
    \vertex (C6) at (5.5, 2)  [scale=.75, fill=black, label=below:]{};
    \vertex (C7) at (5.5, 1.5)  [scale=.75, fill=black, label=below:]{};
        \vertex (C8) at (5.5, 1)  [scale=.75, fill=black, label=below:]{};

        \vertex (D1) at (9,4)  [scale=.75, fill=black, label=below:]{};
    \vertex (D2) at (10,4)  [scale=.75, fill=black, label=left:]{};
    \vertex (D3) at (9, 3)  [scale=.75, fill=black]{};
    \vertex (D4) at (10, 3)  [scale=.75, fill=black, label=above:]{};
    \vertex (D5) at (9.5, 2)  [scale=.75, fill=black, label=right:]{};
    \vertex (D8) at (9.5, 1.5)  [scale=.75, fill=black, label=below:]{};
    \vertex (D6) at (9, 1)  [scale=.75, fill=black, label=below:]{};
    \vertex (D7) at (10, 1)  [scale=.75, fill=black, label=below:]{};
    
     \vertex (E1) at (13.5,4)  [scale=.75, fill=black, label=below:]{};
    \vertex (E2) at (14.5,4)  [scale=.75, fill=black, label=left:]{};
    \vertex (E3) at (13.5, 3)  [scale=.75, fill=black]{};
    \vertex (E4) at (14.5, 3)  [scale=.75, fill=black, label=above:]{};
    \vertex (E5) at (14, 2)  [scale=.75, fill=black, label=right:]{};
    \vertex (E6) at (13.5, 1)  [scale=.75, fill=black, label=below:]{};
    \vertex (E7) at (14.5, 1)  [scale=.75, fill=black, label=below:]{};
      \vertex (E8) at (15,2)  [scale=.75, fill=black, label=below:]{};
    
         \vertex (F1) at (.5,-1.5)  [scale=.75, fill=black, label=below:]{};
    \vertex (F2) at (1.5,-1.5)  [scale=.75, fill=black, label=left:]{};
    \vertex (F3) at (.5, -2.5)  [scale=.75, fill=black]{};
    \vertex (F4) at (1.5, -2.5)  [scale=.75, fill=black, label=above:]{};
    \vertex (F5) at (.5, -3.5)  [scale=.75, fill=black, label=right:]{};
    \vertex (F6) at (1.5, -3.5)  [scale=.75, fill=black, label=below:]{};
    \vertex (F7) at (1, -4.5)  [scale=.75, fill=black, label=below:]{};
      \vertex (F8) at (-.5, -2.5)  [scale=.75, fill=black, label=below:]{};

         \vertex (G1) at (4.5,-1.5)  [scale=.75, fill=black, label=below:]{};
    \vertex (G2) at (5.5,-1.5)  [scale=.75, fill=black, label=left:]{};
    \vertex (G3) at (5, -2.5)  [scale=.75, fill=black]{};
    \vertex (G4) at (4.5, -3.5)  [scale=.75, fill=black, label=above:]{};
    \vertex (G5) at (5.5, -3.5)  [scale=.75, fill=black, label=right:]{};
    \vertex (G6) at (5, -4)  [scale=.75, fill=black, label=below:]{};
    \vertex (G7) at (4.75, -4.5)  [scale=.75, fill=black, label=below:]{};
        \vertex (G8) at (5.25, -4.5)  [scale=.75, fill=black, label=below:]{};

         \vertex (H1) at (9,-1.5)  [scale=.75, fill=black, label=below:]{};
    \vertex (H2) at (10,-2.5)  [scale=.75, fill=black, label=left:]{};
    \vertex (H3) at (9, -2.5)  [scale=.75, fill=black]{};
    \vertex (H4) at (10, -3.5)  [scale=.75, fill=black, label=above:]{};
    \vertex (H5) at (9, -3.5)  [scale=.75, fill=black, label=right:]{};
    \vertex (H6) at (10, -4.5)  [scale=.75, fill=black, label=below:]{};
    \vertex (H7) at (9, -4.5)  [scale=.75, fill=black, label=below:]{};
       \vertex (H8) at (11, -3.5)  [scale=.75, fill=black, label=below:]{};
    
      \vertex (I1) at (13.5,-1.5)  [scale=.75, fill=black, label=below:]{};
    \vertex (I2) at (14.5,-1.5)  [scale=.75, fill=black, label=left:]{};
    \vertex (I3) at (14, -2.5)  [scale=.75, fill=black]{};
    \vertex (I4) at (14, -3)  [scale=.75, fill=black, label=above:]{};
    \vertex (I5) at (14, -3.5)  [scale=.75, fill=black, label=right:]{};
    \vertex (I6) at (13.5, -4.5)  [scale=.75, fill=black, label=below:]{};
    \vertex (I7) at (14.5, -4.5)  [scale=.75, fill=black, label=below:]{};
        \vertex (I8) at (15, -3)  [scale=.75, fill=black, label=below:]{};
        
            \vertex (J1) at (0,-10)  [scale=.75, fill=black, label=below:]{};
    \vertex (J2) at (.5,-8.5)  [scale=.75, fill=black, label=left:]{};
    \vertex (J3) at (1, -9.5)  [scale=.75, fill=black, label=right:]{};
    \vertex (J4) at (1, -7.5)  [scale=.75, fill=black, label=above:]{};
    \vertex (J5) at (1.5, -8.5)  [scale=.75, fill=black, label=right:]{};
    \vertex (J6) at (1.5, -10.5)  [scale=.75, fill=black, label=below:]{};
    \vertex (J7) at (.5, -10.5)  [scale=.75, fill=black, label=below:]{};
     \vertex (J8) at (2, -10)  [scale=.75, fill=black, label=below:]{};
  
     \vertex (K1) at (4.5,-10)  [scale=.75, fill=black, label=below:]{};
    \vertex (K2) at (4.5,-8.5)  [scale=.75, fill=black, label=left:]{};
    \vertex (K3) at (5, -9.5)  [scale=.75, fill=black, label=below:]{};
    \vertex (K4) at (5, -7.5)  [scale=.75, fill=black, label=above:]{};
    \vertex (K5) at (5.5, -8.5)  [scale=.75, fill=black, label=right:]{};
    \vertex (K6) at (6.5, -11)  [scale=.75, fill=black, label=below:]{};
    \vertex (K7) at (6, -10.5)  [scale=.75, fill=black, label=below:]{};
    \vertex (K8) at (5.5, -10)  [scale=.75, fill=black, label=below:]{};

       \vertex (L1) at (9,-10)  [scale=.75, fill=black, label=below:]{};
    \vertex (L2) at (9,-8.5)  [scale=.75, fill=black, label=left:]{};
    \vertex (L3) at (9.5, -9.5)  [scale=.75, fill=black, label=below:]{};
    \vertex (L4) at (9.5, -7.5)  [scale=.75, fill=black, label=above:]{};
    \vertex (L5) at (10, -8.5)  [scale=.75, fill=black, label=right:]{};
    \vertex (L6) at (10, -11)  [scale=.75, fill=black, label=below:]{};
    \vertex (L7) at (10.75, -10.5)  [scale=.75, fill=black, label=right:]{};
    \vertex (L8) at (10, -10)  [scale=.75, fill=black, label=right:]{};

     \vertex (M1) at (14.5,-11)  [scale=.75, fill=black, label=below:]{};
    \vertex (M2) at (13.5,-8.5)  [scale=.75, fill=black, label=left:]{};
    \vertex (M3) at (14, -9.5)  [scale=.75, fill=black, label=below:]{};
    \vertex (M4) at (14, -7.5)  [scale=.75, fill=black, label=above:]{};
    \vertex (M5) at (14.5, -8.5)  [scale=.75, fill=black, label=right:]{};
    \vertex (M6) at (15.5, -11)  [scale=.75, fill=black, label=below:]{};
    \vertex (M7) at (15, -10.5)  [scale=.75, fill=black, label=below:]{};
    \vertex (M8) at (14.5, -10)  [scale=.75, fill=black, label=below:]{};

    \node(B) at (1, 0)[]{$V_5$};
    \node(C) at (5, 0)[]{$V_6$};
     \node(D) at (9.5, 0)[]{$V_7$};
      \node(E) at (14, 0)[]{$V_8$};
       \node(F) at (1, -5.5)[]{$V_9$};
         \node(G) at (5, -5.5)[]{$V_{10}$};
           \node(H) at (9.5, -5.5)[]{$V_{11}$};
             \node(I) at (14, -5.5)[]{$V_{12}$};
               \node(J) at (1, -12)[]{$V_{13}$};
                 \node(K) at (5, -12)[]{$V_{14}$};
                   \node(L) at (9.5, -12)[]{$V_{15}$};
                     \node(M) at (14, -12)[]{$V_{16}$};

    \path 
    
    	(01) edge (03)
	(06) edge (03)
	(07) edge (03)
	(08) edge (03)
	(02) edge (03)
	(04) edge (05)
	(02) edge (04)
	(03) edge (05)
	(02) edge(05)
	
	(0B1) edge (0B3)
	(0B2) edge (0B3)
	(0B2) edge (0B5)
	(0B4) edge (0B5)
	(0B3) edge (0B5)
	(0B4) edge (0B2)
	(0B3) edge (0B8)
	(0B8) edge (0B7)
	(0B6) edge (0B7)

  	(0C1) edge (0C3)
	(0C2) edge (0C3)
	(0C2) edge (0C5)
	(0C4) edge (0C5)
	(0C3) edge (0C5)
	(0C4) edge (0C2)
	(0C3) edge (0C8)
	(0C8) edge (0C7)
	(0C6) edge (0C7)
	(0C8) edge (0C6)
	(1) edge (2)
	(1) edge (3)
	(2) edge (3)
	(4) edge (3)
	(4) edge (5)
	(4) edge (6)
	(4) edge (7)
	(4) edge (8)
	
	(B1) edge (B2)
	(B1) edge (B3)
	(B2) edge (B3)
	(B3) edge (B4)
	(B3) edge (B5)
	(B4) edge (B6)
	(B5) edge (B7)
	(B5) edge (B8)

	(C1) edge (C2)
  	(C1) edge (C3)
	(C2) edge (C3)
	(C3) edge (C4)
	(C4) edge (C5)
	(C4) edge (C6)
	(C6) edge (C7)
	(C7) edge (C8)

		(D1) edge (D2)
  	(D1) edge (D3)
	(D2) edge (D4)
	(D3) edge (D4)
	(D4) edge (D5)
	(D3) edge (D5)
	(D5) edge (D8)
	(D8) edge (D7)
	(D8) edge (D6)

		(E1) edge (E2)
  	(E1) edge (E3)
	(E2) edge (E4)
	(E3) edge (E4)
	(E4) edge (E5)
	(E4) edge (E8)
	(E5) edge (E6)
	(E5) edge (E7)
	(E6) edge (E7)
	
		(F1) edge (F2)
  	(F1) edge (F3)
	(F2) edge (F4)
	(F3) edge (F4)
	(F4) edge (F6)
		(F1) edge (F8)
	(F5) edge (F6)
	(F5) edge (F7)
	(F6) edge (F7)
	
	(G1) edge (G2)
  	(G1) edge (G3)
	(G2) edge (G3)
	(G3) edge (G4)
	(G3) edge (G5)
		(G4) edge (G5)
	(G3) edge (G6)
	(G6) edge (G7)
	(G6) edge (G8)

(H3) edge (H2)
(H4) edge (H2)
	(H5) edge (H3)
  	(H5) edge (H4)
	(H5) edge (H6)
	(H7) edge (H5)
	(H6) edge (H7)
	(H3) edge (H1)
	(H4) edge (H8)
	
	(I1) edge (I2)
  	(I1) edge (I3)
	(I2) edge (I3)
	
  	(I5) edge (I6)
	(I5) edge (I7)
	(I6) edge (I7)
	(I4) edge (I8)
	(I3) edge (I4)
	(I5) edge (I4)
	
	  (J1) edge (J3)
	(J6) edge (J3)
	(J7) edge (J3)
	(J8) edge (J3)
	(J2) edge (J3)
	(J4) edge (J5)
	(J2) edge (J4)
	(J3) edge (J5)
	(J2) edge(J5)
	(J4) edge (J3)
	
	(K1) edge (K3)
	(K2) edge (K3)
	(K2) edge (K5)
	(K4) edge (K5)
	(K3) edge (K5)
	(K4) edge (K2)
	(K3) edge (K8)
	(K8) edge (K7)
	(K6) edge (K7)
	(K4) edge (K3)

  	(L1) edge (L3)
	(L2) edge (L3)
	(L2) edge (L5)
	(L4) edge (L5)
	(L3) edge (L5)
	(L4) edge (L2)
	(L3) edge (L8)
	(L8) edge (L7)
	(L6) edge (L7)
	(L8) edge (L6)
	(L4) edge (L3)
	
		(M1) edge (M7)
	(M2) edge (M3)
	(M2) edge (M5)
	(M4) edge (M5)
	(M3) edge (M5)
	(M4) edge (M2)
	(M3) edge (M8)
	(M8) edge (M7)
	(M6) edge (M7)
	(M4) edge (M3)

    ;

\end{tikzpicture}
\caption{All connected well-edge dominated graphs containing a triangle of order $8$ where only $V_1, V_2$, and $V_3$  contain an induced diamond, and only $V_{13}, V_{14}, V_{15}$, and $V_{16}$ contain a $K_4$\NewSarah{.}}
\label{ND8}
\end{center}
\end{figure}

\end{document}